\newcommand{\eqref}[1]{(\ref{#1})}
\newtheorem{theor}{Theorem}
\newtheorem{lemma}[theor]{Lemma}
\newcommand{\Z}{\mathbb{Z}}
\newcommand{\R}{\mathbb{R}}
\newcommand{\D}{\mathbb{D}}
\newcommand{\ind}{\mathbf{1}}
\newcommand{\ep}{\varepsilon}
\newcommand{\card}{\operatorname{card}}
\newcommand{\dist}{\operatorname{dist}}
\newcommand{\poisson}{\operatorname{Poisson}}
\newcommand{\exponential}{\operatorname{Exponential}}
\begin{document}
\begin{frontmatter}

\title{Evolutionary games on the lattice: Payoffs affecting birth and death rates}
\runtitle{Evolutionary games on the lattice}

\begin{aug}
\author[A]{\fnms{N.}~\snm{Lanchier}\corref{}\thanksref{T1}\ead[label=e1]{nicolas.lanchier@asu.edu}}
\thankstext{T1}{Supported in part by NSF Grant DMS-10-05282.}
\runauthor{N. Lanchier}
\affiliation{Arizona State University}
\address[A]{School of Mathematical and Statistical Sciences\\
Arizona State University\\
Tempe, Arizona 85287\\
USA\\
\printead{e1}} 
\end{aug}

\received{\smonth{12} \syear{2013}}

%
\begin{abstract}
This article investigates an evolutionary game based on the framework
of interacting particle systems.
Each point of the square lattice is occupied by a player who is
characterized by one of two possible strategies and is attributed
a payoff based on her strategy, the strategy of her neighbors and a
payoff matrix.
Following the traditional approach of evolutionary game theory, this
payoff is interpreted as a fitness: the dynamics of the
system is derived by thinking of positive payoffs as birth rates and
the absolute value of negative payoffs as death rates.
The nonspatial mean-field approximation obtained under the assumption
that the population is well mixing is the popular replicator
equation.
The main objective is to understand the consequences of the inclusion
of local interactions by investigating and comparing the
phase diagrams of the spatial and nonspatial models in the four
dimensional space of the payoff matrices.
Our results indicate that the inclusion of local interactions induces
a reduction of the coexistence region of the replicator
equation and the presence of a dominant strategy that wins even when
starting at arbitrarily low density in the region where
the replicator equation displays bistability.
We also discuss the implications of these results in the parameter
regions that correspond to the most popular games:
the prisoner's dilemma, the stag hunt game, the hawk-dove game and the
battle of the sexes.
\end{abstract}

%
\begin{keyword}[class=AMS]
\kwd{60K35}
\end{keyword}
\begin{keyword}
\kwd{Interacting particle system}
\kwd{evolutionary games}
\end{keyword}

\end{frontmatter}

\section{Introduction}
\label{sec:intro}

The book of von Neumann and Morgenstern \cite
{vonneumann_morgenstern_1944} that develops mathematical methods to understand
human behavior in strategic and economic decisions is the first
foundational work in the field of game theory.
The most popular games are symmetric two-person games whose
characteristics are specified by a square matrix where the common number
of rows and columns denotes the number of possible pure strategies,
and the coefficients represent the player's payoffs which depend
on both her strategy and the strategy of her opponent.
Game theory relies on the assumption that players are rational decision-makers.
In particular, the main question in this field is:
what is the best possible response against a player who tries to
maximize her payoff?
The work of Nash \cite{nash_1950} on the existence of Nash
equilibrium, a mathematical criterion for mutual consistency of players'
strategies, is an important contribution that gives a partial answer
to this question.

In contrast, the field of evolutionary game theory, which was
proposed by theoretical biologist Maynard Smith and first
appeared in his work with Price \cite{maynardsmith_price_1973}, does
not assume that players make rational decisions:
evolutionary game theory makes use of concepts from traditional game
theory to describe the dynamics of populations by
thinking of individuals as interacting players and their trait as a
strategy, and by interpreting their payoff as a fitness or
reproduction success.
The analog of Nash equilibrium in evolutionary game theory is called
ESS, a short for evolutionary stable strategy, and is defined
as a strategy which, if adopted by a population, cannot be invaded by
any alternative strategy starting at an infinitesimally small
frequency.
This key concept first appeared in the foundational work~\cite
{maynardsmith_price_1973}.
Even though evolutionary games were originally introduced to
understand the outcome of animal conflicts, they now have a wide
variety of applications as a powerful framework to study interacting
populations in which the reproductive success of each individual
is density dependent, a key component of social and biological communities.

The inclusion of stochasticity and space in the form of local
interactions is another key factor in how communities are
shaped, and evolutionary games have been studied through both the
mathematical analysis of deterministic nonspatial models based on
differential equations and simulations of more complex models based on
spatial stochastic processes.
For a review on deterministic nonspatial evolutionary games, we refer
to \cite{hofbauer_sigmund_1998}.
On the side of spatial stochastic evolutionary games, one important
contribution is the work of Nowak and
May~\cite{nowak_may_1992,nowak_may_1993} which, based on simulations
of cellular automata, shows that space favors cooperation
in the prisoner's dilemma.
See also~\cite{nowak_bonhoeffer_may_1994a,nowak_bonhoeffer_may_1994b}
for similar works based on asynchronous updating models,
\cite{nowak_2006} for a review on spatial evolutionary games and \cite
{fu_nowak_hauert_2010,langer_nowak_hauert_2010} and
references therein for more recent work on the topic.
The rigorous analysis of nonspatial deterministic models and
simulations of spatial stochastic models are both important and
complementary but also have some limitations:
spatial simulations suggest that nonspatial models fail to
appropriately describe systems, including local interactions, but are
known at the same time to be difficult to interpret, leading sometimes
to erroneous conclusions.
This underlines the necessity of an analytical study of evolutionary
games based on stochastic spatial models.
References \cite{chen_2013,cox_durrett_perkins_2013,durrett_2013}
are, as far as we know, the only three articles that also carry
out a rigorous analysis of such models but the authors' approach
significantly differs from ours: they assume that
\[
\mbox{fitness} = (1 - w) + w \times\mbox{payoff}\quad \mbox{and} \quad w \to0,
\]
which is referred to as weak selection and allows for a complete
analytical treatment using voter model perturbation techniques.
Indeed, for $w = 0$, their model reduces to the popular voter model
introduced in \cite{clifford_sudbury_1973,holley_liggett_1975}.
In contrast, we assume that fitness${} = {}$payoff, which makes our model
mathematically more challenging and does not allow
for a complete analysis.
However, the limiting behavior in different parameter regions can be
understood based on various techniques, which leads to
interesting findings about the consequences of the inclusion of local
interactions.
For a similar approach, we also refer to \cite
{evilsizor_lanchier_2014} where the best-response dynamics, a~spatial
process in
which players update their strategy at a constant rate in order to
maximize their payoff, is studied.


\subsection*{The replicator equation}

As previously mentioned, most of the analytical works in
evolutionary game theory are based on ordinary differential equations.
The most popular model that falls into this category is the replicator
equation, which we describe for simplicity in the presence of
only two strategies since this is the case under consideration for the
stochastic spatial model we introduce later.
The dynamics depends on a $2 \times2$ payoff matrix $A = (a_{ij})$
where $a_{ij}$ denotes the payoff of a player who follows
strategy~$i$ interacting with a player who follows strategy~$j$.
To formulate the replicator equation and describe its bifurcation
diagram, it is convenient to use the terminology introduced
by the author in \cite{lanchier_2012} by setting
\[
a_1:= a_{11} - a_{21} \quad\mbox{and}\quad
a_2:= a_{22} - a_{12}
\]
and declaring strategy $i$ to be:
\begin{itemize}
\item\emph{altruistic} when $a_i < 0$, that is, a player with strategy
$i$ confers a lower payoff to a player following the same strategy
than to a player following the other strategy,
\item\emph{selfish} when $a_i > 0$, that is, a player with strategy $i$
confers a higher payoff to a player following the same strategy
than to a player following the other strategy.
\end{itemize}
The replicator equation is a system of coupled differential equations
for the frequency $u_i$ of players following strategy $i$.
The payoff of each type $i$ player is given by
%
\begin{equation}
\label{eq:payoff-nospace} \phi_i = \phi_i (u_1,
u_2):= a_{i1} u_1 + a_{i2}
u_2 \qquad\mbox{for } i = 1, 2.
\end{equation}
Interpreting this payoff as the growth rate of each type $i$ player,
using that the frequencies sum up to one, and recalling
the definition of $a_1$ and $a_2$, one obtains the following ordinary
differential equation, the so-called replicator equation,
for the frequency of type~1 players:
%
\begin{eqnarray}
\label{eq:replicator} %
 u_1' (t)
& = & u_1 u_2 (\phi_1 - \phi_2)
= u_1 u_2 (a_{11} u_1 +
a_{12} u_2 - a_{21} u_1 -
a_{22} u_2)
\nonumber
\\[-8pt]
\\[-8pt]
\nonumber
& = & u_1 u_2 (a_1 u_1 -
a_2 u_2) = u_1 (1 - u_1)
\bigl((a_1 + a_2) u_1 - a_2
\bigr).
\end{eqnarray}
The system has three fixed points, namely
\[
e_1:= 1 \quad\mbox{and} \quad e_2:= 0\quad \mbox{and}\quad
e_{12}:= a_2 (a_1 + a_2)^{-1},
\]
and basic algebra shows that the limiting behavior only depends on the
sign of $a_1$ and $a_2$, therefore on whether strategies are
altruistic or selfish.
More precisely, we find that:
\begin{itemize}
\item when strategy~1 is selfish and strategy~2 altruistic, strategy~1 wins:
$e_{12} \notin(0, 1)$ and starting from any initial condition $u_1
(0) \in(0, 1)$, $u_1 \to e_1$.
\item when strategy~1 is altruistic and strategy~2 selfish, strategy~2 wins:
$e_{12} \notin(0, 1)$ and starting from any initial condition $u_1
(0) \in(0, 1)$, $u_1 \to e_2$.
\item when both strategies are altruistic, coexistence occurs:
$e_{12} \in(0, 1)$ is globally stable, that is, starting from any
initial condition $u_1 (0) \in(0, 1)$, $u_1 \to e_{12}$.
\item when both strategies are selfish, the system is bistable:
$e_{12} \in(0, 1)$ is unstable, and $u_1$ converges to either $e_1$
or $e_2$ depending on whether it is initially larger or smaller than $e_{12}$.
\end{itemize}
In terms of evolutionary stable strategy, this indicates that, for
well-mixed populations, a strategy is evolutionary stable if it
is selfish, but not if it is altruistic.


\subsection*{Spatial analog}

To define a spatial analog of the replicator equation, we
employ the framework of interacting particle systems by positioning
the players on an infinite grid.
Our spatial game is then described by a continuous-time Markov chain
$\eta_t$ whose state space maps the $d$-dimensional lattice into
the set of strategies $\{1, 2 \}$, with $\eta_t (x)$ denoting the
strategy at vertex $x$.
Players being located on a geometrical structure, space can be
included in the form of local interactions by assuming that the
payoff of each player is computed based on the strategy of her neighbors.
More precisely, we define the interaction neighborhood of vertex $x$ as
\[
N_x:= \Bigl\{y \in\Z^d
\dvtx y \neq x \mbox{ and } \max_{i = 1, 2, \ldots, d} |y_i -
x_i| \leq M \Bigr\}\qquad \mbox{for } x \in\Z^d,
\]
where $M$ is referred to as the dispersal range.
Letting $f_j (x, \eta)$ denote the fraction of type~$j$ players in the
neighborhood of vertex $x$, the payoff of $x$ is then defined as
\[
\phi\bigl(x, \eta| \eta(x) = i\bigr):= a_{i1} f_1 (x,
\eta) + a_{i2} f_2 (x, \eta) \qquad\mbox{for } i = 1, 2,
\]
which can be viewed as the spatial analog of \eqref{eq:payoff-nospace}.
The dynamics is again derived by interpreting the payoff as a fitness.
More precisely, we think of a payoff as either a birth rate or a death
rate depending on its sign:
if the player at vertex $x$ has a positive payoff, then at rate this
payoff, one of her neighbors chosen uniformly at random
adopts the strategy at $x$, while if she has a negative payoff, then
at rate minus this payoff, she adopts the strategy of one
of her neighbors again chosen uniformly at random.
This is described formally by the Markov generator
%
\begin{eqnarray}
\label{eq:brown-hansell} %
&&Lf (\eta) \nonumber\\
&&\qquad= N^{-1} \sum
_x \sum_{y \in N_x} \phi(y,
\eta) \ind\bigl\{\phi(y, \eta) > 0 \bigr\} \ind\bigl\{\eta(x) \neq\eta(y) \bigr\}
\bigl[f (\eta_x) - f (\eta )\bigr]
\\
&&\qquad\quad{}- N^{-1} \sum
_x \sum_{y \in N_x} \phi(x, \eta)
\ind\bigl\{\phi(x, \eta) < 0 \bigr\} \ind\bigl\{\eta(x) \neq\eta(y) \bigr\}
\bigl[f (\eta_x) - f (\eta )\bigr], \nonumber
\end{eqnarray}
where configuration $\eta_x$ is obtained from configuration $\eta$ by
changing the strategy at vertex $x$ and leaving the strategy
at the other vertices unchanged, and where the constant $N$ is simply
the common size of the interaction neighborhoods.
Note that $L$ indeed defines a unique Markov process according to, for
example, Theorem B3 in Liggett \cite{liggett_1999}.
Model \eqref{eq:brown-hansell} is inspired from the spatial version of
Maynard Smith's evolutionary games introduced by Brown and
Hansell \cite{brown_hansell_1987}.
Their model allows for any number of players per vertex, and the
dynamics includes three components: migration, death due to crowding
and birth or death based on the value of the payoff.
Our model only retains the third component.
We also point out that the model obtained from~\eqref
{eq:brown-hansell} by assuming that the population is well
mixing, called the mean-field approximation, is precisely the
replicator equation~\eqref{eq:replicator}. Therefore the consequences
of the inclusion of space and stochasticity can indeed be understood
through the comparison of both models, which as we show later,
disagree in many ways.
In fact, the original model of Brown and Hansell is also studied
numerically in \cite{durrett_levin_1994} where it is used to argue
that the inclusion of space and/or stochasticity can lead to drastic
behavior changes.


\subsection*{Main results}

We now study the limiting behavior of the spatial game.
Unless explicitly stated otherwise, all the results below apply to the
process starting from Bernoulli product
measures in which the density of each strategy is positive and
constant across space:
%
\begin{equation}
\label{eq:product} P \bigl(\eta_0 (x_1) =
\eta_0 (x_2) = \cdots= \eta_0
(x_n) = 1\bigr) = \rho ^n \qquad\mbox{for some }  \rho\in(0, 1)
\end{equation}
and every finite sequence $x_1, x_2, \ldots, x_n$ of distinct vertices.
From the point of view of the replicator equation, whether a strategy
wins, or both strategies coexist or the system is bistable
is defined based on the value of the nontrivial fixed point $e_{12}$
and the stability of this and the other two fixed points.
For the spatial game, we say that:
\begin{itemize}
\item strategy $i \in\{1, 2 \}$ \emph{survives} whenever
\[
P \bigl(\eta_s (x) = i \mbox{ for some } s > t\bigr) = 1 \qquad\mbox{for
all } (x, t) \in\Z^d \times\R_+;
\]
\item strategy $i \in\{1, 2 \}$ \emph{goes extinct} whenever
\[
\lim_{t \to\infty} P \bigl(
\eta_t (x) = i\bigr) = 0 \qquad\mbox{for all } x \in\Z^d;
\]
\item a strategy \emph{wins} if it survives whereas the other strategy
goes extinct;
\item both strategies \emph{coexist} whenever
\[
\liminf_{t \to\infty} P \bigl(
\eta_t (x) \neq\eta_t (y)\bigr) > 0 \qquad\mbox{for all } x, y
\in\Z^d;
\]
\item the system \emph{clusters} whenever
\[
\lim_{t \to\infty} P \bigl(
\eta_t (x) \neq\eta_t (y)\bigr) = 0 \qquad\mbox{for all } x, y
\in\Z^d.
\]
\end{itemize}
Numerical simulations suggest that in the presence of one selfish and
one altruistic strategy, the selfish strategy wins, just
as in the replicator equation.
In contrast, when both strategies are selfish, spatial and nonspatial
models disagree.
Our brief analysis of the replicator equation indicates that the
system is bistable: both strategies are ESS.
The transition curve for the spatial model is difficult to find based
on simulations, but simple heuristic arguments looking at the
interface between two adjacent blocks of the two strategies suggest
that the most selfish strategy, that is, the one with the largest $a_i$,
always wins even when starting at a very low density, thus indicating
that only the most selfish strategy is an ESS.
The fact that bistability in the mean-field model results in the
presence of a strong type in the interacting particle system has already
been observed for a number of models, and we refer to \cite
{durrett_2009,durrett_levin_1994} for such examples.
For two altruistic strategies, coexistence is again possible, but the
coexistence region of the spatial game is significantly smaller
than that of the replicator equation: except in the one-dimensional
nearest neighbor case, coexistence occurs in a thorn-shaped region
starting at the bifurcation point $a_1 = a_2 = 0$.
The smaller the range of the interactions and the spatial dimension,
the smaller the coexistence region.
In the one-dimensional nearest neighbor case, the simulations are
particularly difficult to interpret when
%
\begin{equation}
\label{eq:clustering} a_{11} + a_{12} < 0 < a_{12}
\quad\mbox{and}\quad a_{22} + a_{21} < 0 < a_{21}.
\end{equation}
See Figure~\ref{fig:cluster} for a picture of two realizations when
\eqref{eq:clustering} holds.
However, we were able to prove that the one-dimensional nearest
neighbor system clusters except in a parameter region with measure zero in
the space of the $2 \times2$ matrices in which all the players have a
zero payoff eventually, thus leading to a fixation of the system in a
configuration in which both strategies are present.
More generally, we conjecture that, except in this parameter region
with measure zero, the least altruistic strategy always wins, just
as in the presence of selfish-selfish interactions.
The thick continuous lines on the right-hand side of Figures~\ref
{fig:diagrams-1} and \ref{fig:diagrams-2} summarize our conjectures
for the spatial game in the one-dimensional nearest neighbor case and
all the other cases, respectively.
These results are reminiscent of the ones obtained for the models introduced
in \cite{lanchier_2012,lanchier_neuhauser_2009,neuhauser_pacala_1999}, which though they are not examples of
evolutionary games,
also include density-dependent birth or death rates.
Our proofs and the proofs in these three references strongly differ
while showing the same pattern: for all four models,
the inclusion of local interactions induces a reduction of the
coexistence region of the mean-field model, and there is a dominant type
that wins even when starting at arbitrarily low density in the region
where the mean-field model displays bistability.

We now state our analytical results for the spatial stochastic
process, which confirms in particular these two important aspects.
To motivate and explain our first result, we observe that the presence
of density-dependent birth and death rates typically precludes the
existence of a mathematically tractable dual process.
See Liggett \cite{liggett_1985}, Section II.3, for a general
definition of duality and dual process.
Note, however, that if $a_{11} = a_{12}$ and $a_{22} = a_{21}$, then
the payoff of players of either type is constant across all possible
spatial configurations: birth and death rates are no longer density dependent.
For this specific choice of the payoffs, the process reduces to a
biased voter model~\cite{bramson_griffeath_1980,bramson_griffeath_1981},
and therefore strategy~1 wins if in addition $a_{12} > a_{21}$.
For all other payoff matrices, the dynamics is more complicated, but
there is a large parameter region in which the spatial game can still
be coupled with a biased voter model to deduce that strategy~1 wins.
This parameter region is specified in the following theorem.
See also the right-hand side of Figure~\ref{fig:diagrams-2} where the
boundary of this region is represented in dashed lines on the
$a_{11} - a_{22}$ plane.

%
\begin{theor}
\label{th:biased}
Assume that $a_{12} > a_{21}$.
Then strategy~1 wins whenever
\[
\max(a_{22}, a_{21}) + a_{21} (N -
1)^{-1} < \min (a_{11}, a_{12}) + a_{12}
(N - 1)^{-1}.
\]
\end{theor}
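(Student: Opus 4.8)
The plan is to couple the spatial game with a supercritical biased voter model favoring strategy~1 and then to invoke the known behavior of the latter. Write $i^c$ for the strategy other than $i$, write $\phi (z, \eta)$ for the payoff of vertex $z$ given its current strategy, and set $s^+ = \max (s, 0)$, $s^- = \max (-s, 0)$. Collecting in the generator \eqref{eq:brown-hansell} the terms that flip a given vertex shows that a vertex $x$ with $\eta (x) = i$ switches strategy at rate
$$ c_i (x, \eta) \ = \ N^{-1} \sum_{y \in N_x, \,\eta (y) = i^c} \phi (y, \eta)^+ \ + \ \phi (x, \eta)^- \,f_{i^c} (x, \eta). $$
The first step is a uniform payoff estimate along discordant edges. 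If $y \in N_x$ with $\eta (x) \neq \eta (y)$ then, the neighborhoods being symmetric, $x \in N_y$, so the fraction of strategy-1 players in $N_y$ lies in $[0, (N-1)/N]$ when $\eta (y) = 1$ and is at least $1/N$ when $\eta (y) = 2$; since the payoff of $y$ is an affine function of this fraction, minimizing it when $\eta (y) = 1$ and maximizing it when $\eta (y) = 2$ over the corresponding range yields, for every vertex having a neighbor of the opposite strategy,
$$ \phi (y, \eta \mid \eta (y) = 1) \ \geq \ \tfrac{N-1}{N} \,\alpha_1 \qquad \hbox{and} \qquad \phi (y, \eta \mid \eta (y) = 2) \ \leq \ \tfrac{N-1}{N} \,\alpha_2, $$
where $\alpha_1 := \min (a_{11}, a_{12}) + a_{12} (N - 1)^{-1}$ and $\alpha_2 := \max (a_{21}, a_{22}) + a_{21} (N - 1)^{-1}$. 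Note that the hypothesis of the theorem is precisely $\alpha_2 < \alpha_1$, and that it forces $a_{12} > a_{21}$, so the separately stated inequality $a_{12} > a_{21}$ is there only to pin down the direction of the bias in the degenerate case $a_{11} = a_{12}$, $a_{21} = a_{22}$ in which the game is literally a biased voter model.

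Feeding these estimates into the rate formula is the second step. When $\eta (x) = 1$, each of the $N f_2 (x, \eta)$ summands satisfies $\phi (y, \eta)^+ \leq \tfrac{N-1}{N} \alpha_2^+$ and $\phi (x, \eta)^- \leq \tfrac{N-1}{N} \alpha_1^-$, whence $c_1 (x, \eta) \leq \delta \,f_2 (x, \eta)$ with $\delta := \tfrac{N-1}{N} (\alpha_1^- + \alpha_2^+)$; by monotonicity of the maps $s \mapsto s^\pm$ one likewise gets $c_2 (x, \eta) \geq \beta \,f_1 (x, \eta)$ for $\eta (x) = 2$ with $\beta := \tfrac{N-1}{N} (\alpha_1^+ + \alpha_2^-)$. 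Since $s^+ - s^- = s$, this gives $\beta - \delta = \tfrac{N-1}{N} (\alpha_1 - \alpha_2) > 0$, while $\beta, \delta \geq 0$.

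The third step is the coupling itself. Let $\zeta_t$ be the biased voter model on $\Z^d$ with the same neighborhoods $N_x$ in which a strategy-2 vertex turns into strategy~1 at rate $\beta f_1 (\cdot)$ and a strategy-1 vertex turns into strategy~2 at rate $\delta f_2 (\cdot)$. Running $\eta_t$ and $\zeta_t$ together through the basic coupling started from $\zeta_0 = \eta_0$, I would check that $\{x : \zeta_t (x) = 1 \} \subseteq \{x : \eta_t (x) = 1 \}$ is preserved: where the two configurations agree and equal~1 at a vertex $x$ one has $c_1 (x, \eta) \leq \delta f_2 (x, \eta) \leq \delta f_2 (x, \zeta)$ because $\zeta$ has at least as many strategy-2 neighbors of $x$, so each $\eta$-flip $1 \to 2$ at $x$ is matched by a $\zeta$-flip; where they agree and equal~2 one has $\beta f_1 (x, \zeta) \leq \beta f_1 (x, \eta) \leq c_2 (x, \eta)$, so each $\zeta$-flip $2 \to 1$ at $x$ is matched by an $\eta$-flip; and where $\zeta (x) = 2$ but $\eta (x) = 1$ no flip of either process can destroy the inclusion. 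Since $\beta > \delta$, $\zeta_t$ is a supercritical biased voter model and hence, started from a Bernoulli product measure with a positive density of strategy-1 sites, converges weakly to the all-1 configuration; by the coupling, strategy~1 then survives for $\eta_t$ and strategy~2 goes extinct, which is exactly the statement that strategy~1 wins.

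The crux is the coupling step, since the spatial game is not attractive --- the rates $c_i (x, \eta)$ depend on the individual payoffs of the neighbors of $x$, not merely on local densities --- so no general monotonicity criterion applies and the joint dynamics has to be built by hand, for instance via a graphical representation in which each potential $\zeta$-flip and each potential $\eta$-flip at a vertex carries its own Poisson clock and the dominated clock is realized as a sub-clock of the dominating one; the verification that no elementary transition breaks $\{\zeta_t = 1 \} \subseteq \{\eta_t = 1 \}$ then reduces to the density comparisons $f_1 (x, \zeta) \leq f_1 (x, \eta)$ and $f_2 (x, \zeta) \geq f_2 (x, \eta)$ together with the uniform bounds $c_1 \leq \delta f_2$, $c_2 \geq \beta f_1$ of the second step. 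A subsidiary point is to cite the asymptotic behavior of the supercritical biased voter model for the general box neighborhood $N_x$, rather than only in the one-dimensional nearest-neighbor case; this can be obtained from the duality between the biased voter model and a supercritical branching coalescing random walk.
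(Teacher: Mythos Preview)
Your proof is correct and follows essentially the same route as the paper: uniform payoff bounds along discordant edges give $c_1(x,\eta)\le\delta\,f_2(x,\eta)$ and $c_2(x,\eta)\ge\beta\,f_1(x,\eta)$ with $\beta>\delta$, after which a coupling with a biased voter model that favors strategy~1 finishes the argument. Your closing worry about the coupling being delicate because the game is not attractive is overstated---once the rate inequalities above hold for \emph{all} ordered pairs $\zeta\le\eta$, stochastic domination is exactly the content of Theorem~III.1.5 in Liggett's book, which the paper simply cites rather than rebuilding by hand.
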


This parameter region intersects the regions in which the replicator
equation displays coexistence and bistability.
In particular, the theorem confirms that the inclusion of local
interactions induces a reduction of the coexistence and bistable regions
in accordance with the simulation results mentioned above.
The next two results strengthen this theorem by proving that the
parameter region in which there is a unique~ESS extends to arbitrarily
small/large values of $a_{11}$ and $a_{22}$.
To state these results, it is convenient to introduce the vector $\bar
a_{11}:= (a_{12}, a_{21}, a_{22})$.

\begin{theor}
\label{th:richardson}
For all $\bar a_{11}$ there exists $m (\bar a_{11}) < \infty$ such that
\[
\mbox{strategy 1 wins whenever } a_{11} > m (\bar a_{11}).
\]
\end{theor}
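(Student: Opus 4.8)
The plan is to exploit the fact that, when $a_{11}$ is large, \emph{clusters} of type~1 players expand at a rate that tends to infinity while every transition that can be detrimental to strategy~1 occurs at a rate bounded uniformly in $a_{11}$, and to turn this separation of time scales into a comparison with supercritical oriented percolation via the standard block construction. Throughout, $\bar a_{11} = (a_{12}, a_{21}, a_{22})$ is fixed, and the process is realized from a Harris-type graphical representation, which is legitimate since, $a_{11}$ being fixed, all flip rates are bounded.

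The two elementary estimates are as follows. First, if $\eta (x) = 1$ and $x$ has at least one type~1 neighbor then $f_1 (x, \eta) \geq 1/N$, so the payoff of $x$ is at least $a_{11}/N - |a_{12}|$, which for $a_{11}$ large is positive and at least $\lambda := \lambda (a_{11})$ with $\lambda (a_{11}) \to \infty$; hence such an $x$ converts each of its type~2 neighbors into a type~1 player at rate at least $\lambda/N$, and, having a positive payoff, it cannot be killed by the death mechanism in the second line of \eqref{eq:brown-hansell}. Second, the only way a type~2 player can be created or maintained inside a region occupied by type~1 players is through a type~2 neighbor with positive payoff (rate at most $\max (a_{22}, a_{21}, 0)$ per site) or through an \emph{isolated} type~1 player with negative payoff (rate at most $\max (- a_{12}, 0)$ per site); in either case the total rate of these detrimental transitions at a given vertex is bounded by a constant $C = C (\bar a_{11})$ that does not depend on $a_{11}$.

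I would then run the block construction. Fix once and for all space and time scales $L$ and $T$ (independent of $a_{11}$), declare a site of the renormalized lattice \emph{good} when the presence, in the corresponding space-time box, of a seed consisting of a cube of type~1 players of side $2 M + 1$ forces, by time $T$, a prescribed slightly larger cube of type~1 players in each of the $2 d$ neighboring boxes, and couple with the graphical representation so that good sites dominate an oriented percolation process. The first estimate shows that the growth floods a box of linear size $O (L)$ in time $O (L/\lambda) = o (1)$ as $a_{11} \to \infty$, so by time $T$ the target cubes have long been invaded; the only obstruction is a type~2 excursion inside the type~1 bulk that has not yet healed. But such an excursion is born at rate at most $C$, grows at rate at most $C$, and is eroded at rate at least $\lambda$ per boundary site, so for $\lambda \gg C$ it is dominated by a heavily subcritical growth-and-death process and the probability that any target cube contains a type~2 site at time $T$ is $O (C/\lambda) \to 0$. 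Consequently the density of good sites tends to $1$ as $a_{11} \to \infty$, so there exists $m (\bar a_{11}) < \infty$ such that for $a_{11} > m (\bar a_{11})$ the dominated oriented percolation is supercritical.

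To conclude, note that a Bernoulli product measure with a positive density of type~1 players almost surely contains cubes of side $2 M + 1$ entirely occupied by type~1 players in arbitrarily remote regions, so the block construction shows that the type~1 region grows linearly, engulfs every fixed vertex, and, with positive probability, eventually surrounds the origin by type~1 players forever, while the bounded rate of the detrimental transitions prevents type~2 excursions from ever competing with this growth; hence strategy~1 survives, strategy~2 goes extinct, and strategy~1 wins. I expect the crux of the argument to be precisely the control of the detrimental transitions inside the type~1 bulk --- the verification that one badly placed flip cannot nucleate a macroscopic type~2 region --- which is where the separation of scales $\lambda (a_{11}) \gg C (\bar a_{11})$ is genuinely used; the rest is a routine, if lengthy, instance of the renormalization technique.
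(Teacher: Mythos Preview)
Your block-construction strategy is sound and close in spirit to the paper's, but the implementations differ and your final step contains a genuine gap.

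The paper does not work directly with large $a_{11}$ and a separation of time scales. Instead it first sets $a_{11} = 1$ and $\bar a_{11} = 0$, observes that in this degenerate case type~2 players never flip and type~1 players with a type~1 neighbor have strictly positive payoff, so the process starting from a box $B_K$ stochastically dominates a Richardson model; the shape theorem then gives the block estimate. Continuity of the flip rates in the payoffs extends this to $\bar a_{11} \in (-\rho,\rho)^3$ for some small $\rho > 0$, and the general case follows by the observation that multiplying all four payoffs by the same constant $m$ merely rescales time, so $a_{11} > m$ with arbitrary $\bar a_{11} \in (-m\rho, m\rho)^3$ is equivalent to $a_{11} = 1$ with small $\bar a_{11}$. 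This perturbation-plus-rescaling device sidesteps entirely the bookkeeping you sketch for controlling type~2 excursions inside the bulk; your direct approach is plausible but would need a careful argument that an isolated type~1 death or a type~2 birth cannot cascade, whereas the paper's limiting process has no such events at all.

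The real gap is your last paragraph. The block construction coupled with supercritical oriented percolation gives \emph{survival} of strategy~1, not extinction of strategy~2: there is a positive density of closed sites at every level, and nothing you wrote rules out a positive density of type~2 players persisting forever in those gaps. The claim that the type~1 region ``engulfs every fixed vertex'' does not follow from supercriticality alone. The paper handles this with a separate argument (its Lemma~\ref{lem:richardson-dry}): one shows that when sites are open with probability close to one, \emph{dry} sites do not percolate, even along an enriched graph $\mathcal H_2$ that allows horizontal edges, and then checks that the existence of a type~2 player at a late time forces a long dry path in $\mathcal H_2$ back to level zero. This dry-path lemma (going back to Durrett~\cite{durrett_1992}) is a nontrivial additional ingredient that your proposal is missing.
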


This implies that, in accordance with our numerical simulations, the
parameter region in which the replicator equation is
bistable while there is a unique ESS for the spatial game is much
larger than the parameter region covered by Theorem~\ref{th:biased}.
Note also that, in view of the symmetry of the model, the previous
theorem also holds by exchanging the roles of the two strategies.

\begin{theor}
\label{th:walks}
For all $\bar a_{11}$ such that $a_{12} < 0$, there exists $m (\bar
a_{11}) < \infty$ such that
\[
\mbox{strategy 2 wins whenever } a_{11} < - m (\bar a_{11}).
\]
\end{theor}

This implies that, again in accordance with our numerical simulations,
the parameter region in which coexistence occurs for the
replicator equation while there is a unique ESS for the spatial game
is much larger than the one covered by Theorem~\ref{th:biased}.
Once more, we point out that, in view of the symmetry of the model,
the theorem also holds by exchanging the roles of the two strategies
as indicated in Figure~\ref{fig:diagrams-2}.
The previous two results hold regardless of the spatial dimension and
the range of the interactions and can be significantly improved
in the one-dimensional nearest neighbor case through an analysis of
the boundaries of the system.
More precisely, letting
%
\begin{eqnarray}
\label{eq:measure-zero} %
\mathcal M_2^*&:=& \bigl
\{A = (a_{ij}) \mbox{ such that}
\nonumber
\\[-8pt]
\\[-8pt]
\nonumber
&&\hspace*{6pt} a_{11} a_{12} a_{21} a_{22}
(a_{11} + a_{12}) (a_{22} + a_{21}) \neq0
\mbox{ and } a_{11} + a_{12} \neq a_{22} +
a_{21} \bigr\}\hspace*{-10pt}
\end{eqnarray}
we have the following theorem.

%
\begin{theor}
\label{th:1D}
Assume that $M = d = 1$. Then:
\begin{itemize}
\item strategy~1 wins for all $a_{11} > \max(a_{22}, a_{21}) +
(a_{21} - a_{12})$, and
\item the system starting from any translation invariant distribution
clusters for all $A \in\mathcal M_2^*$.
\end{itemize}
\end{theor}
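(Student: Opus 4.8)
The plan is to pass to the \emph{interface representation} of the one-dimensional nearest neighbor system. Since $M = d = 1$ we have $N = 2$, so a configuration is encoded by its set of boundaries --- the edges $(x, x+1)$ with $\eta(x) \neq \eta(x+1)$ --- which alternate between ``type $12$'' (strategy~1 on the left) and ``type $21$'' along the line; each boundary performs a nearest neighbor random walk, and two consecutive boundaries annihilate precisely when the monochromatic block they delimit has size one and its unique site flips. Writing $p := \tfrac12 (a_{11} + a_{12})$, $q := \tfrac12 (a_{22} + a_{21})$, and $z^{\pm}$ for the positive and negative parts of $z$, a direct computation from \eqref{eq:brown-hansell} gives all the jump rates: a boundary flanked by two blocks of size at least two enlarges the strategy-1 block at rate $\tfrac12 (p^+ + q^-)$ and the strategy-2 block at rate $\tfrac12 (q^+ + p^-)$, hence drifts at speed $\tfrac12 (p - q)$ in the direction that enlarges strategy~1; an isolated strategy-2 site vanishes at rate $p^+ + a_{21}^-$ and grows at rate $a_{21}^+ + p^-$; and symmetrically an isolated strategy-1 site vanishes at rate $q^+ + a_{12}^-$ and grows at rate $a_{12}^+ + q^-$. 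On $\mathcal M_2^*$ all of these rates are nonzero and $p \neq q$, so no configuration with both strategies present is absorbing.

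For the first item, a short computation shows that $a_{11} > \max (a_{22}, a_{21}) + (a_{21} - a_{12})$ is equivalent to the two inequalities $p > q$ and $p > a_{21}$. The first makes every strategy-2 block of size at least two shrink, and every strategy-1 block of size at least two grow, with a uniformly positive drift; the second makes every isolated strategy-2 site vanish strictly faster than it grows, since $(p^+ + a_{21}^-) - (a_{21}^+ + p^-) = p - a_{21}$. I would use these facts to dominate the size of each strategy-2 block by a birth-and-death chain on $\{0, 1, 2, \ldots\}$ absorbed at $0$ with a drift toward $0$, so that every strategy-2 block is absorbed in a.s.\ finite time; combined with the observations that new strategy-2 blocks are never created and that two of them can only coalesce (never split), and with a block argument controlling these coalescences, this gives that strategy~2 goes extinct, while strategy~1 survives because it starts at positive density and a strategy-1 block only grows once it has reached size two. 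An alternative is a coupling with a biased voter model favoring strategy~1, the two inequalities being what is needed to dominate its rates at boundaries flanked by large blocks and at the collapse of isolated strategy-2 sites respectively.

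For the second item, assume without loss of generality $p > q$ (the case $p < q$ being symmetric, and $p = q$ excluded on $\mathcal M_2^*$). Reading the line from left to right, two consecutive boundaries bounding a strategy-2 block are of type $12$ then $21$ and drift toward each other, so such a block has negative drift when large, is absorbed at $0$ in a.s.\ finite time, and the two boundaries annihilate; two consecutive boundaries bounding a strategy-1 block instead drift apart. The goal is to show that for all fixed $x, y$ the probability that a boundary lies in $[x, y]$ at time $t$ tends to $0$, which is exactly clustering. I would obtain this by comparing the interface process near $[x, y]$ with a system of annihilating random walks with drift: each boundary is paired with an annihilation partner across the strategy-2 block it bounds, the drift bounds the time until that pair annihilates, and the window is therefore cleared faster than it can be refilled from outside; the non-degeneracy defining $\mathcal M_2^*$ is used to exclude the frozen configurations that would otherwise obstruct this.

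The main difficulty, shared by both items, is that the jump rate of a boundary depends on the sizes of the adjacent monochromatic blocks --- it is different when a block has size one --- so the boundaries are neither independent nor a monotone family, and in particular an isolated strategy-1 site may collapse and merge the two strategy-2 blocks flanking it. For the first item this is why the comparison must be arranged to survive such coalescences, and why the hypothesis $p > a_{21}$, which controls exactly the size-one strategy-2 configurations, cannot be dropped by this method; for the second item it is why the comparison with annihilating random walks must go through a block construction robust to local rate changes rather than an exact identification, and it is also why the stronger assertion --- that the least altruistic strategy wins on all of $\mathcal M_2^*$ --- is only conjectured.
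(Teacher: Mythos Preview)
Your interface setup and rate computations are correct and match the paper's. For the first item, however, your birth--death domination of individual strategy-2 blocks runs into the merger obstacle you acknowledge, and ``a block argument controlling these coalescences'' is not enough: when an isolated strategy-1 site collapses, the size of the merged strategy-2 block jumps by an unbounded amount, so there is no single birth--death chain that dominates it. The paper sidesteps this entirely by a different device: it starts from the half-line configuration $\eta_0(x)=1$ for $x\le 0$ and tracks only the \emph{leftmost} boundary $X_t$. Since everything to the left of $X_t$ is strategy~1, the only relevant datum is the gap $K_t$ to the next boundary on the right, and a two-case computation ($K_t\ge 2$ versus $K_t=1$) shows the drift of $X_t$ is at least $\min(2p-2q,\,2p-2a_{21})>0$ under your hypotheses $p>q$ and $p>a_{21}$. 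From $P(X_t>0\ \forall t)>0$ one then gets that two adjacent type-1 sites invade with positive probability, and the product-measure start finishes the argument. No mergers ever enter.

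For the second item there is a genuine gap. Your pairing argument requires that once a $+$ and its partner $-$ across a strategy-2 block become adjacent (an isolated strategy-2 site), they annihilate at positive rate $p^+ + a_{21}^-$. But on $\mathcal M_2^*$ with $p>q$ one may still have $p<0$ and $a_{21}>0$ (this is condition \eqref{eq:1D-cluster-2} in the paper), and then $p^+ + a_{21}^- = 0$: the isolated strategy-2 site can only grow, never vanish, so your pair is reflected at distance~1 rather than absorbed. The non-degeneracy defining $\mathcal M_2^*$ does not exclude this; it only makes $p,q,a_{12},a_{21}$ individually nonzero, not the combination $p^+ + a_{21}^-$. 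The paper's Lemma~\ref{lem:1D-cluster-2} handles exactly this regime by following \emph{four} consecutive boundaries $X^+<X^-<Y^+<Y^-$: the drift brings $X^-$ and $Y^+$ together (isolated strategy-1 site) while the midpoints diffuse symmetrically, and then a separate case split on the sign of $a_{12}$ produces an annihilation mechanism using either that isolated strategy-1 site or the adjacent isolated strategy-2 site. Your outline does not contain this step, and without it the clustering proof is incomplete on a region of positive measure in $\mathcal M_2^*$.
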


%
\begin{figure}

\includegraphics{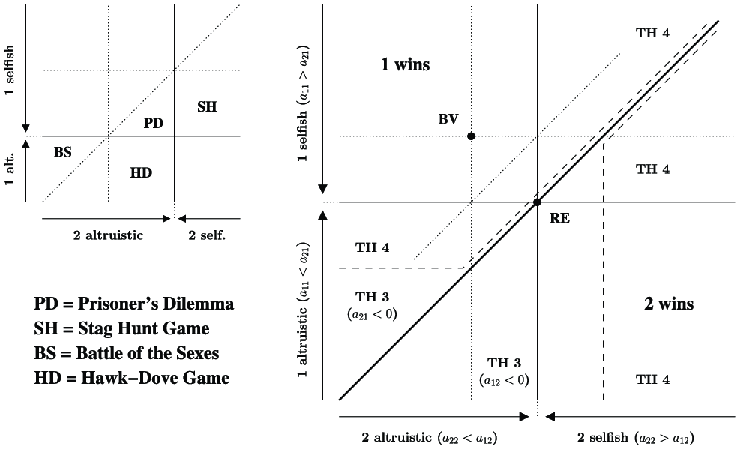}

\caption{List of the most popular $2 \times2$ games on the
left and phase diagrams of the spatial game
along with a summary of the theorems in the $a_{11} - a_{22}$ plane on
the right.
The thick lines refer to the transition curves suggested by the simulations.
BV${} = {}$biased voter model, and RE${} = {}$replicator equation.}
\label{fig:diagrams-1}
\end{figure}

Figure~\ref{fig:diagrams-1} gives the phase diagram of the
one-dimensional nearest neighbor process obtained by
combining Theorems \ref{th:walks} and~\ref{th:1D}.
The parameter region in the first part of Theorem~\ref{th:1D}, and the
one obtained by symmetry are represented in dashed lines
when $a_{12} > a_{21}$.
For most of the parameter region in which at least one strategy is
selfish, the most selfish strategy wins, which significantly
improves Theorem~\ref{th:richardson}.
The second part of the theorem supplements Theorem~\ref{th:walks} by
proving that, except in the measure zero parameter region that
corresponds to $A \in\mathcal M_2^*$, coexistence is not possible in
the one-dimensional nearest neighbor case.
Finally, our last theorem looks more closely at the interactions
between two altruistic strategies and confirms that, except in the
one-dimensional nearest neighbor case, coexistence is possible for the
spatial game.

\begin{theor}
\label{th:coex}
There is $m:= m (a_{12}, a_{21}) > 0$ such that coexistence occurs when
\[
c (M, d) a_{22} < a_{11} < - m\quad \mbox{and}\quad c (M, d)
a_{11} < a_{22} < - m,
\]
where, for each range $M$ and spatial dimension $d$,
\[
c (M, d):= \frac{2M ((2M + 1)^d - 2)}{(M + 1) (2M (2M +
1)^{d - 1} - 1)}.
\]
\end{theor}

Note that the parameter region in the theorem is nonempty if and only
if $c (M, d)$ is strictly larger than one.
In addition, the larger $c (M, d)$, the larger this parameter region.
This, together with Table~\ref{tab:drift}, indicates that, except in
the one-dimensional nearest neighbor case in which the region
given in the theorem is empty, the coexistence region contains an
infinite subset of a certain triangle whose range increases with
both the dispersal range and the spatial dimension.
We refer to Figure~\ref{fig:diagrams-2} for a summary of the theorems
that exclude the one-dimensional nearest neighbor case.

%
\begin{table}[t]
\tabcolsep=0pt
\caption{$c (M, d)$ for different values of the range $M$ and
the dimension $d$}
\label{tab:drift}
\begin{tabular*}{\textwidth}{@{\extracolsep{\fill}}lccccccccc@{}}
\hline
& \multicolumn{1}{c}{$\bolds{d = 1}$} & \multicolumn{1}{c}{$\bolds{d = 2}$} &
\multicolumn{1}{c}{$\bolds{d = 3}$} & \multicolumn{1}{c}{$\bolds{d = 4}$} &
\multicolumn{1}{c}{$\bolds{d = 5}$} & \multicolumn{1}{c}{$\bolds{d = 6}$} &
\multicolumn{1}{c}{$\bolds{d = 7}$}
& \multicolumn{1}{c}{$\bolds{d = 8}$} & \multicolumn{1}{c@{}}{$\bolds{d = 9}$}
 \\
\hline
$M = 1$ & 1.0000 & 1.4000 & 1.4706 & 1.4906 & 1.4969 & 1.4990 & 1.4997
& 1.4999 & 1.5000 \\
$M = 2$ & 1.3333 & 1.6140 & 1.6566 & 1.6647 & 1.6663 & 1.6666 & 1.6667
& 1.6667 & 1.6667 \\
$M = 3$ & 1.5000 & 1.7195 & 1.7457 & 1.7494 & 1.7499 & 1.7500 & 1.7500
& 1.7500 & 1.7500 \\
$M = 4$ & 1.6000 & 1.7803 & 1.7978 & 1.7998 & 1.8000 & 1.8000 & 1.8000
& 1.8000 & 1.8000 \\
$M = 5$ & 1.6667 & 1.8196 & 1.8321 & 1.8332 & 1.8333 & 1.8333 & 1.8333
& 1.8333 & 1.8333 \\
$M = 6$ & 1.7143 & 1.8470 & 1.8564 & 1.8571 & 1.8571 & 1.8571 & 1.8571
& 1.8571 & 1.8571 \\
$M = 7$ & 1.7500 & 1.8672 & 1.8745 & 1.8750 & 1.8750 & 1.8750 & 1.8750
& 1.8750 & 1.8750 \\
$M = 8$ & 1.7778 & 1.8827 & 1.8885 & 1.8889 & 1.8889 & 1.8889 & 1.8889
& 1.8889 & 1.8889 \\
$M = 9$ & 1.8000 & 1.8950 & 1.8997 & 1.9000 & 1.9000 & 1.9000 & 1.9000
& 1.9000 & 1.9000\\
\hline
\end{tabular*}
\end{table}

\begin{figure}[b]

\includegraphics{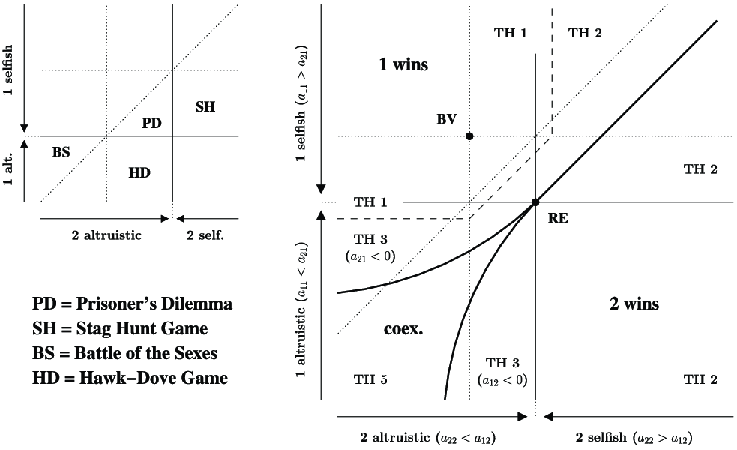}

\caption{List of the most popular $2 \times2$ games on the
left and phase diagrams of the spatial game
along with a summary of the theorems in the $a_{11} - a_{22}$ plane on
the right.
The thick lines refer to the transition curves suggested by the simulations.
BV${} = {}$biased voter model, and RE${} = {}$replicator equation.}
\label{fig:diagrams-2}
\end{figure}


\subsection*{The role of space in the most popular games}

The last step before going into the details of the proofs is to
discuss the implications of our results in the most
popular symmetric two-person games.
To define these games, note that there are $4! = 24$ possible
orderings of the four payoffs therefore, also accounting for
symmetry, twelve possible strategic situations corresponding to twelve
symmetric two-person games involving two strategies.
These twelve regions of the parameter space are represented in the
$a_{11} - a_{22}$ plane on the left-hand diagrams of
Figures~\ref{fig:diagrams-1}--\ref{fig:diagrams-2} along with the
names of the most popular games under the assumption
$a_{12} > a_{21}$.

\emph{Prisoner's dilemma.}
The prisoner's dilemma is probably the most popular symmetric
two-person game.
When $a_{12} > a_{21}$, strategy~1 means defection whereas strategy~2
means cooperation.
From the point of view of the replicator equation, defection is the
only ESS.
Numerical simulations suggest that the same holds for our spatial
model, which is covered in part in the general case and completely
in the one-dimensional nearest neighbor case in Theorems~\ref
{th:biased} and~\ref{th:1D}.

\emph{Stag hunt.}
In the stag hunt game with $a_{12} > a_{21}$, strategy~1 represents
safety: hunting a hare, whereas strategy~2 represents social
cooperation: hunting a stag.
In the absence of space, both strategies are evolutionary stable.
In contrast, Theorem~\ref{th:richardson} shows that, in the presence
of local interactions, social cooperation is the only ESS
if the reward $a_{22}$ for social cooperation is high enough, that is,
a stag is worth much more than a hare, whereas if the reward
is not significant, then safety becomes the only ESS according to
Theorems~\ref{th:biased} and~\ref{th:1D}.

\emph{Hawk-dove.}
In the hawk-dove game, strategy~1 represents hawks that fight for the
resource and strategy~2, doves that share peacefully the resource.
The cost of a fight is larger than the value of the contested
resource, which makes this game an example of anti-coordination game:
the best possible response to a strategy is to play the other strategy.
In the absence of space, none of the strategies is evolutionary stable
so coexistence occurs.
In contrast, Theorem~\ref{th:walks} indicates that, in the presence of
a spatial structure, the dove strategy is the only ESS
when the cost of an escalated fight is high enough or equivalently
when $a_{11}$ is small enough.

\emph{Battle of the sexes.}
In the battle of the sexes, husband and wife cannot remember if they
planned to meet at the opera or at the football match.
The husband would prefer the match and the wife the opera, but overall
both would prefer to go to the same place.
Mutual cooperation, that is, both go to the place that the other
prefers, leads to the lowest possible payoff $a_{22}$ which makes this
game another example of anti-coordination game.
In the absence of space, none of the strategies is evolutionary
stable, so coexistence occurs.
Coexistence is also possible in the presence of a spatial structure
according to Theorem~\ref{th:coex}.
However, if the cost of mutual cooperation is too high, that is,
$a_{22}$ too small, then defection becomes the unique ESS according
to Theorem~\ref{th:walks}.

The rest of the paper is devoted to the proofs of the theorems.
We point out that the theorems are not proved in the order they are
stated but instead grouped based on the approach and techniques
they rely on, which makes the reading of the proofs somewhat easier.


\section{Proof of Theorem~\texorpdfstring{\protect\ref{th:1D}}{4}}
\label{sec:1D}

We first study the one-dimensional nearest neighbor system.
The analysis in this case relies on the study of the process that
keeps track of the boundaries between the two strategies and strongly
differs from the analysis of the system in higher spatial dimensions
or with a larger range of interactions.
Throughout this section, we let $\xi_t$ denote the \emph{boundary process}
\[
\xi_t (x):= \eta_t (x + 1/2) - \eta_t (x
- 1/2) \qquad\mbox{for all } x \in\D:= \Z+ 1/2
\]
and think of sites in state~0 as empty, sites in state~$+1$ as occupied
by a~$+$~particle and sites in state~$-1$ as occupied by
a~$-$~particle.
To begin with, we assume that
%
\begin{equation}
\label{eq:config-left} \eta_0 \dvtx \Z\longrightarrow\{1, 2 \} \qquad\mbox{is
such that } \eta_0 (x) = 1 \mbox{ for all } x \leq M,
\end{equation}
where $M$ is a positive integer.
Note that, from the point of view of the boundary process, this
implies that we start with no particle to the left of $M$.
Also, we let
\[
X_t:= \inf\bigl\{x \in\D\dvtx \xi_t (x) \neq0 \bigr\}
= \inf\bigl\{x \in \D\dvtx \xi_t (x) = 1 \bigr\}
\]
denote the position of the leftmost particle, which is necessarily a
$+$ particle in view of the initial configuration.
The key to proving the first part of Theorem~\ref{th:1D} is given by
the next lemma, which shows the result when starting from the
particular initial configuration \eqref{eq:config-left}.

%
\begin{lemma}
\label{lem:1D-drift}
Assume \eqref{eq:config-left} and $a_{11} > \max(a_{22}, a_{21}) +
(a_{21} - a_{12})$. Then
\begin{eqnarray}
P \Bigl(X_t > 0 \mbox{ for all } t > 0
\mbox{ and } \lim_{t
\to\infty} X_t = \infty\Bigr) \geq1 -
\exp(- a_0 M)
\nonumber\\
\eqntext{\mbox{for some } a_0 > 0.}
\end{eqnarray}
\end{lemma}

\begin{pf}
The idea is to prove that there exists $a > 0$ such that
\[
\lim_{h \to0} h^{-1} E
(Z_{t + h} - Z_t | Z_t) \leq0\qquad \mbox{almost
surely } \mbox{where } Z_t:= \exp (- a X_t)
\]
and then apply the optimal stopping theorem to the supermartingale $(Z_t)$.
Since the transition rates of the process $(X_t)$ depends on the
distance between the~$+$~particle it keeps track of and the next
particle to the right, we introduce the gap process
\begin{eqnarray*}
K_t&:=& \inf\bigl\{x \in\D\dvtx x >
X_t \mbox{ and } \xi_t (x) \neq0 \bigr\} -
X_t
\\
&=& \inf\bigl\{x \in\D\dvtx x > X_t \mbox{ and } \xi_t (x)
= -1 \bigr\} - X_t.
\end{eqnarray*}
Looking at the payoff of the players at sites $X_t \pm1/2$ and their
associated birth and death rates, which are reported
in Figure~\ref{fig:interface} depending on the value of the gap, we find
%
\begin{eqnarray}
\label{eq:1D-drift-4} %
\Phi_1 (a) &:= &
\lim_{h \to0} h^{-1} E (Z_{t + h} -
Z_t | Z_t, K_t = 1)
\nonumber\\
& \leq& \bigl(e^{- a (X_t + 2)} - e^{- a X_t}\bigr) \lim
_{h \to0} h^{-1} P (X_{t + h} - X_t
\geq2 | K_t = 1)
\nonumber\\
& &{} + \bigl(e^{- a (X_t - 1)} - e^{- a X_t}\bigr) \lim_{h \to
0}
h^{-1} P (X_{t + h} - X_t = - 1 | K_t
= 1)
\nonumber
\\[-8pt]
\\[-8pt]
\nonumber
& = & Z_t \bigl(e^{- 2a} - 1\bigr) \bigl((1/4) \max(0,
a_{11} + a_{12}) + \max (0, - a_{21})\bigr)
\nonumber\\
& &{} + Z_t \bigl(e^a - 1\bigr) \bigl((1/4) \max(0, -
a_{11} - a_{12}) + (1/2) \max(0, a_{21})\bigr) \nonumber\\
&=:&
\Psi_1 (a)\nonumber
\end{eqnarray}

%
\begin{figure}

\includegraphics{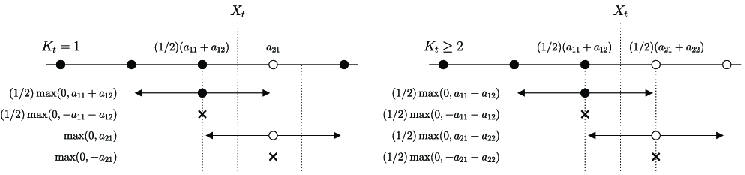}

\caption{Picture related to the proof of Lemma~\protect\ref{lem:1D-drift}.
The numbers are the top of both pictures are the payoffs of the two
players at $X_t \pm1/2$ and the numbers at the bottom the associated
birth and death rates.
Birth events are symbolically represented by double arrows and death
events by crosses.}
\label{fig:interface}
\end{figure}

\noindent almost surely, where the inequality is obtained by ignoring births from
$X_t + 3/2$ and jumps of more than two units to the right.
Note also that the derivative of the right-hand side evaluated at~$a =
0$ satisfies the following inequality almost surely:
%
\begin{eqnarray}
\label{eq:1D-drift-5} %
\Psi'_1
(0) & = & - (1/2) \max(0, a_{11} + a_{12}) Z_t -
2 \max(0, - a_{21}) Z_t\nonumber
\\
&&{} + (1/4) \max(0, - a_{11} - a_{12}) Z_t +
(1/2) \max(0, a_{21}) Z_t\nonumber
\\
& \leq& (1/4) \max(0, - a_{11} - a_{12}) Z_t -
(1/4) \max(0, a_{11} + a_{12}) Z_t
\\
&&{} + (1/2) \max(0, a_{21}) Z_t - (1/2) \max(0, -
a_{21}) Z_t\nonumber
\\
& = & (1/4) (- a_{11} - a_{12} + 2 a_{21})
Z_t\nonumber
\\
& \leq& (1/4) \bigl(\max(a_{22}, a_{21}) + (a_{21}
- a_{12}) - a_{11}\bigr) Z_t < 0.\nonumber
\end{eqnarray}
Similarly, conditioning on the event $K_t \geq2$, we have
%
\begin{eqnarray}
\label{eq:1D-drift-6} %
\Phi_2 (a) &:= &
\lim_{h \to0} h^{-1} E (Z_{t + h} -
Z_t | Z_t, K_t \geq2)\nonumber
\\
& = & \bigl(e^{- a (X_t + 1)} - e^{- a X_t}\bigr) \lim_{h \to0}
h^{-1} P (X_{t + h} - X_t = 1 | K_t
\geq2)\nonumber
\\
& &{} + \bigl(e^{- a (X_t - 1)} - e^{- a X_t}\bigr) \lim_{h \to
0}
h^{-1} P (X_{t + h} - X_t = - 1 | K_t
\geq2)
\\
& = & Z_t \bigl(e^{-a} - 1\bigr) (1/4) \bigl(\max(0,
a_{11} + a_{12}) + \max(0, - a_{22} -
a_{21})\bigr)\nonumber
\\
& & + Z_t \bigl(e^a - 1\bigr) (1/4) \bigl(\max(0, -
a_{11} - a_{12}) + \max(0, a_{22} +
a_{21})\bigr) \nonumber
\end{eqnarray}
almost surely. Taking again the derivative at $a = 0$, we get
%
\begin{eqnarray}
\label{eq:1D-drift-7} %
\Phi'_2
(0) & = & - (1/4) \max(0, a_{11} + a_{12}) Z_t -
(1/4) \max(0, - a_{22} - a_{21}) Z_t\nonumber
\\
& &{} + (1/4) \max(0, - a_{11} - a_{12}) Z_t +
(1/4) \max(0, a_{22} + a_{21}) Z_t
\nonumber
\\[-8pt]
\\[-8pt]
\nonumber
& = & (1/4) (- a_{11} - a_{12} + a_{22} +
a_{21}) Z_t
\\
& \leq& (1/4) \bigl(\max(a_{22}, a_{21}) + (a_{21}
- a_{12}) - a_{11}\bigr) Z_t < 0 \nonumber
\end{eqnarray}
almost surely.
From \eqref{eq:1D-drift-4}--\eqref{eq:1D-drift-7}, we deduce that
\[
\Phi_1 (a_0) \leq\Psi_1
(a_0) \leq\Psi_1 (0) = 0\quad \mbox{and}\quad \Phi_2
(a_0) \leq\Phi_2 (0) = 0
\]
for some $a_0 > 0$ fixed from now on.
In particular,
\[
\lim_{h \to0} h^{-1} E
\bigl(\exp(- a_0 X_{t +
h}) - \exp(- a_0
X_t) | X_t\bigr) \leq0 \qquad\mbox{almost surely},
\]
which shows that $Z_t = \exp(- a_0 X_t)$ is a supermartingale.
As mentioned above, we conclude using the optimal stopping theorem: we
introduce the stopping times
\[
\tau_-:= \inf\{t \dvtx X_t = -1/2 \} \quad\mbox{and}\quad
 \tau_n:= \inf\{t \dvtx X_t \geq n + 1/2 \}\qquad \mbox{for all } n > M.
\]
Using that $T_n:= \min(\tau_-, \tau_n)$ is almost surely finite, we get
\begin{eqnarray*} E Z_{T_n} & \leq& E Z_0 =
E e^{- a_0 X_0} \leq e^{- a_0
M},
\\
E Z_{T_n} & = & E (Z_{T_n} | T_n = \tau_-) P
(T_n = \tau _-) + E (Z_{T_n} | T_n =
\tau_n) P (T_n = \tau_n)
\\
& \geq& e^{a_0 / 2} \bigl(1 - P (T_n = \tau_n)
\bigr) + e^{- a_0 n} P (T_n = \tau_n).
\end{eqnarray*}
Observing that the sequence $\{T_n = \tau_n \}$ is nonincreasing for
the inclusion, applying the monotone convergence theorem and using the previous
inequalities, we deduce that
\begin{eqnarray*} &&P \Bigl(X_t > 0 \mbox{ for all } t > 0
\mbox{ and } \lim_{t \to\infty
} X_t = \infty\Bigr)
\\
&&\qquad\geq P (T_n = \tau _n \mbox{ for all } n > M) = \lim
_{n \to\infty} P (T_n = \tau_n) \\
&&\qquad\geq\lim
_{n \to
\infty} \bigl(e^{a_0 / 2} - e^{- a_0 M}\bigr)
\bigl(e^{a_0 / 2} - e^{-
a_0 n}\bigr)^{-1} \geq1 -
e^{- a_0 M}.
\end{eqnarray*}
This completes the proof of the lemma.
\end{pf}

It follows from the previous lemma that, starting more generally from
a product measure with a positive density of type~1 players, strategy~1
wins with probability one.
This statement, which corresponds to the first part of Theorem~\ref
{th:1D}, is proved in the next lemma.

%
\begin{lemma}
\label{lem:1D-wins}
Assume \eqref{eq:product} and $a_{11} > \max(a_{22}, a_{21}) +
(a_{21} - a_{12})$. Then strategy~1 wins.
\end{lemma}

\begin{pf}
Let $M$ be a positive integer, and let
\[
x_M:= \inf\bigl\{x \in\Z\dvtx x > 0 \mbox{ and } \eta_0
(x) = \eta_0 (x + 1) = \cdots= \eta_0 (x + 2M + 2) = 1
\bigr\}.
\]
Note that, starting from \eqref{eq:product}, vertex $x_M$ is well
defined and almost surely finite.
We define the cluster starting at $x_M$ as the set of space--time
points that can be reached from $x_M$ by a path moving forward in time and
contained in the space--time region occupied by type~1 players
\[
C (x_M):= \bigl\{(x, t) \in\Z\times\R_+ \dvtx (x_M, 0)
\to(x, t) \bigr\},
\]
where $(x_M, 0) \to(x, t)$ means that there exist
\[
z_0 = x_M, z_1, \ldots, z_n:=
x \in\Z\quad\mbox{and} \quad s_0:= 0 < s_1 < \cdots<
s_n < s_{n + 1}:= t \in\R_+
\]
such that the following two conditions hold:
\begin{itemize}
\item for $i = 1, 2, \ldots, n$, we have $|z_i - z_{i - 1}| = 1$, and

\item for $i = 0, 1, \ldots, n$, we have $\eta_s (x_i) = 1$ for all
$s_i \leq s \leq s_{i + 1}$.
\end{itemize}
Finally, for all times $t$, we let
\[
l_t:= \inf\bigl\{x \in\Z\dvtx (x,
t) \in C (x_M) \bigr\} \quad\mbox{and}\quad r_t:= \sup\bigl\{x
\in\Z\dvtx (x, t) \in C (x_M) \bigr\}
\]
be, respectively, the leftmost and the rightmost vertices in the cluster.
Due to one-dimensional nearest neighbor interactions, as long as the
cluster is nonempty, all vertices between the leftmost and rightmost
vertices follow strategy~1.
In particular, it follows from Lemma~\ref{lem:1D-drift} and the
obvious symmetry of the evolution rules that the probability that
strategy~1 wins
is larger than
\begin{eqnarray*}
&&P \bigl((x, t) \in C (x_M) \mbox{ for all $x \in\Z$ and all $t$ large}\bigr)
\\
&&\qquad= P \Bigl(l_t < r_t \mbox{ for all } t > 0 \mbox{ and }
\lim_{t \to \infty} l_t = - \infty\mbox{ and } \lim
_{t \to\infty} r_t = + \infty\Bigr) \\
&&\qquad\geq P \Bigl(l_t < x_M + M + 1 < r_t \mbox{ for all }
t > 0 \mbox{ and}\\
&&\hspace*{68pt}{} \lim_{t \to\infty} l_t = - \infty
\mbox{ and } \lim_{t \to\infty} r_t = + \infty\Bigr)
\\
&&\qquad\geq P \Bigl((- X_t) > 0 \mbox{ for all } t > 0 \mbox{ and } \lim
_{t \to \infty} (- X_t) = - \infty\Bigr)
\\
&&\qquad\quad{}\times P \Bigl(X_t > 0 \mbox{ for all } t > 0 \mbox{ and } \lim
_{t \to\infty
} X_t = + \infty\Bigr)\\
&&\qquad \geq\bigl(1 - \exp(-
a_0 M)\bigr)^2.
\end{eqnarray*}
Since $a_0 > 0$, and this holds for all $M$, it follows that strategy~1
wins almost surely.
This completes the proof of the lemma and the first part of the theorem.
\end{pf}

The next two lemmas focus on the second part of the theorem whose
proof consists in showing extinction of the boundary process
starting from any translation invariant distribution.

%
\begin{lemma}
\label{lem:1D-cluster-1}
Let $A \in\mathcal M_2^*$ as in \eqref{eq:measure-zero}.
Then the system clusters if
%
\begin{equation}
\label{eq:1D-cluster-1} a_{22} + a_{21} < a_{11} +
a_{12} \quad\mbox{and}\quad (a_{11} + a_{12} > 0 \mbox{ or }
a_{21} < 0).
\end{equation}
\end{lemma}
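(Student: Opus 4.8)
The plan is to work entirely with the boundary process $\xi_t$ defined above, for which clustering of $\eta_t$ amounts to \emph{local extinction}: the system clusters if and only if $P(\xi_t(x) \neq 0) \to 0$ as $t \to \infty$ for every $x \in \D$. Two structural facts about $\xi_t$ are used repeatedly. First, since the generator \eqref{eq:brown-hansell} only ever updates a site $x$ using a neighbor $y$ with $\eta(x) \neq \eta(y)$, every flip takes place at a site adjacent to a boundary; consequently $\xi_t$ evolves only by letting its particles jump to a nearest-neighbor site and by annihilating adjacent pairs (a $+$ and the $-$ immediately to its right, which together delimit a strategy~$2$ block of length one, or the mirror situation for a strategy~$1$ block), and no particle is ever created. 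Second, along $\D$ the particles necessarily alternate in sign $+,-,+,-,\dots$, so a maximal strategy~$2$ block is delimited by a $+$ on its left and a $-$ on its right, and a maximal strategy~$1$ block by a $-$ on its left and a $+$ on its right.

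Next I would carry out the drift computation for a single particle, exactly as in the proof of Lemma~\ref{lem:1D-drift}. Reading the rates off \eqref{eq:brown-hansell}, a $+$ particle flanked by a strategy~$1$ block and a strategy~$2$ block, each of length at least two, has drift proportional to $(a_{11}+a_{12}) - (a_{22}+a_{21})$ to the right, and the reflection symmetry of the boundary process gives that a $-$ particle in the analogous environment has the same drift to the left. Under the first hypothesis of \eqref{eq:1D-cluster-1}, $a_{22}+a_{21} < a_{11}+a_{12}$, this says that the two endpoints of any strategy~$2$ block of length at least two drift toward each other, so the length of such a block performs a random walk with strictly negative drift; symmetrically, the length of a strategy~$1$ block has strictly positive drift, so strategy~$1$ blocks tend to grow. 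The formulas degenerate only for length-one blocks: for a strategy~$2$ block of length one the occupied site has both neighbors in state~$1$, hence payoff $a_{21}$, and a direct inspection shows that the bounding pair annihilates at a positive rate precisely when $a_{21} < 0$ (the site dies and turns into a~$1$) or when $a_{11}+a_{12} > 0$ (a flanking strategy~$1$ player invades it). This is exactly where the second hypothesis of \eqref{eq:1D-cluster-1}, $a_{11}+a_{12} > 0$ or $a_{21} < 0$, enters: it guarantees that a length-one strategy~$2$ block cannot become trapped.

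Putting these together, the length of a tagged strategy~$2$ block is dominated by a birth--death chain on $\{0,1,2,\dots\}$ absorbed at $0$, with strictly negative drift away from the boundary and a strictly positive jump rate from $1$ to $0$; such a chain hits $0$ almost surely in finite time. Hence every strategy~$2$ block collapses a.s., and since no boundary particle is ever created, each bounded interval of $\D$ is, from some random time on, free of the $(+,-)$-pairs that delimit strategy~$2$ blocks; because $+$ and $-$ particles alternate, this forces the interval to become monochromatic, i.e. $P(\xi_t(x) \neq 0) \to 0$, which is clustering (in fact strategy~$1$ prevails locally, in line with the first part of Theorem~\ref{th:1D}).

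The step I expect to be the main obstacle is that a strategy~$2$ block is not dynamically isolated: its left endpoint starts to interact with the next strategy~$2$ block to the left as soon as the strategy~$1$ block separating them shrinks to length one and disappears, merging the two into a single longer strategy~$2$ block, and one must rule out that a chain of such merging events feeds a block forever. The way around this is to exploit the positive drift of strategy~$1$ block lengths obtained above: by transience of a positively drifting walk, a given strategy~$1$ block returns to length one only finitely often almost surely, so after a random time the two strategy~$1$ blocks flanking a tagged strategy~$2$ block have grown and never shrink back, the strategy~$2$ block becomes genuinely isolated, and the previous paragraph applies. Making this uniform over the infinitely many blocks present in a translation-invariant initial configuration is the delicate point; it is cleanest to handle it by comparing the restriction of $\xi_t$ to a fixed window with a dominating system of independent annihilating random walks carrying the drifts computed above, for which local extinction is classical.
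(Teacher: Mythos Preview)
Your drift computation and your identification of the role of the second hypothesis (positive annihilation rate for a length-one strategy~2 block exactly when $a_{11}+a_{12}>0$ or $a_{21}<0$) match the paper's proof precisely; the core mechanism is the same. Where you diverge is in the bookkeeping: you track the \emph{length} of a tagged strategy~2 block and try to show it is eventually isolated, whereas the paper tracks the two particles $X_t^{+}$ and $X_t^{-}$ bounding that block and simply conditions on whether either of them annihilates with an \emph{outside} particle first. If so, an annihilation has occurred; if not, the drift argument forces $X_t^{-}-X_t^{+}\to 1$ and then the second hypothesis yields annihilation. Either way the particle density strictly decreases, and that is all clustering requires.

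Your route runs into a genuine gap at the merging step. The claim ``by transience of a positively drifting walk, a given strategy~1 block returns to length one only finitely often'' overlooks that such a walk started at~1 hits~0 with positive probability: the flanking strategy~1 block can \emph{vanish}, at which point your tagged strategy~2 block merges with its neighbor, acquires a new flanking block, which may again vanish, and so on. Nothing in your argument rules out an infinite chain of such mergers feeding the block forever, and the fallback ``comparison with a dominating system of independent annihilating random walks'' is asserted but not constructed (the particles' jump rates genuinely depend on their neighbors, so independence is not free). The paper's conditioning on $\sigma_{\pm}$ sidesteps this entirely: a merger \emph{is} an annihilation event, so it counts toward decreasing the density rather than being an obstacle. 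Replacing your isolation argument with that single observation closes the gap and shortens the proof considerably.
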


\begin{pf}
As pointed out before the statement of the lemma, to prove clustering,
it suffices to prove extinction of the boundary process
since, for all $x < y$, we have
\begin{eqnarray*} P \bigl(\eta_t (x) \neq
\eta_t (y)\bigr) & \leq& P \bigl(\eta_t (z) \neq
\eta_t (z + 1) \mbox{ for some } z = x, \ldots, y - 1\bigr)
\\
& \leq& \sum_{z = x}^{y - 1} P \bigl(
\eta_t (z) \neq\eta_t (z + 1)\bigr) = \sum
_{z = x}^{y - 1} P \bigl(\xi_t (z + 1/2) \neq0
\bigr),
\end{eqnarray*}
which converges to zero whenever the boundary process goes extinct.
In particular, the main objective is to show that the density of
particles $u (t)$ in the boundary process at time $t$ converges to zero
as time goes to infinity, that is,
%
\begin{equation}
\label{eq:1D-cluster-1a} u (t):= P \bigl(\xi_t (x) \neq0\bigr) \to0
\qquad\mbox{as } t \to\infty.
\end{equation}
Due to translation invariance of the initial distribution and the
evolution rules, the probability above is indeed constant across space.
The definition of the boundary process also implies that two
consecutive particles must have opposite signs.
Moreover, due to one-dimensional nearest neighbor interactions,
particles cannot be created and if a particle jumps onto another
particle then both
particles, necessarily with opposite signs, annihilate.
In particular,
\[
 u (t) \leq u (s) \qquad\mbox{for all } s \leq t
\mbox{ therefore } \lim_{t \to\infty} u (t):= l \mbox { exists}.
\]
To show that the limit $l = 0$, we prove that, in every group of four
consecutive particles at arbitrary times, at least one particle is
killed after
an almost surely finite time.
Let $s \geq0$ and
\begin{eqnarray*}X_s^+ &:= & \inf\bigl\{x \in\D
\dvtx x > 0 \mbox{ and } \xi_s (x) = +1 \bigr\},
\\
X_s^- &:= & \inf\bigl\{x \in\D\dvtx x > X_s^+
\mbox{ and } \xi_s (x) = -1 \bigr\}
\end{eqnarray*}
be the position at time~$s$ of the first~$+$~particle to the right of
the origin and the position at time~$s$ of the following particle,
which is necessarily a~$-$~particle.
Also, we let
\begin{eqnarray*}X_t^+ &:= & \mbox{position at time
$t > s$ of the $+$ particle that originates}
\\
&&\mbox{from $X_s^+$ at time
$s$},
\\
X_t^- &:= & \mbox{position at time $t > s$ of the $-$ particle
that originates}\\
&&\mbox{from $X_s^-$ at time $s$}
\end{eqnarray*}
which are well defined until one particle is killed when we set
$X_t^{\pm} = \varnothing$, and
\[
\tau_+:= \inf\bigl\{t > s \dvtx X_t^+ = \varnothing\bigr\}
\quad\mbox{and}\quad \tau_-:= \inf\bigl\{t > s \dvtx X_t^- = \varnothing\bigr
\}.
\]
We claim that $\inf(\tau_+, \tau_-) < \infty$.
To prove our claim, we let
\begin{eqnarray*}\sigma_+ &:= & \mbox{time at which the $+$
particle at $X_t^+$ annihilates with a $-$ particle}
\\
&&\mbox{on its left},
\\
\sigma_- &:= & \mbox{time at which the $-$ particle at $X_t^-$
annihilates}\\
&&\mbox{ with a $+$ particle on its right}.
\end{eqnarray*}
By inclusion of events, we have
%
\begin{equation}
\label{eq:1D-cluster-1b} P \bigl(\inf(\tau_+, \tau_-) < \infty| \inf(\sigma_+, \sigma _-)
< \infty\bigr) = 1.
\end{equation}
Moreover, in view of the first inequality in \eqref{eq:1D-cluster-1},
we have the following transition rate:
\begin{eqnarray*}
&&\lim_{h \to0} h^{-1} P
\bigl(X_{t + h}^+ - X_t^+ = 1 | X_t^+ \neq
\varnothing\mbox{ and } \xi_t \bigl(X_t^+ - 1\bigr) =
\xi_t \bigl(X_t^+ + 1\bigr) = 0\bigr)
\\
&&\qquad= (1/4) \bigl(\max(0, a_{11} + a_{12}) + \max(0, -
a_{22} - a_{21})\bigr)
\\
&&\qquad> (1/4) \bigl(\max(0, a_{11} + a_{12}) - (a_{11} +
a_{12})\\
&&\hspace*{59pt}{} + \max(0, - a_{22} - a_{21}) +
(a_{22} + a_{21})\bigr)
\\
&&\qquad= (1/4) \bigl(\max(0, - a_{11} - a_{12}) + \max(0,
a_{22} + a_{21})\bigr)
\\
&&\qquad= \lim_{h \to0} h^{-1} P \bigl(X_{t +
h}^+ -
X_t^+ = - 1 | X_t^+ \neq\varnothing\mbox{ and}\\
&&\hspace*{89pt}{}\xi_t \bigl(X_t^+ - 1\bigr) = \xi_t
\bigl(X_t^+ + 1\bigr) = 0\bigr).
\end{eqnarray*}
Since on the event $\sigma_+ = \infty$ the particle at $X_t^+$ cannot
jump onto a $-$~particle on its left, we deduce that,
on this event, the position of the particle has a positive drift until
it is one unit from the~$-$~particle on its right.
Similarly, on the event $\sigma_- = \infty$, the position of the
particle at $X_t^-$ has a negative drift until it is one unit from the
$+$~particle on its left.
This implies that
%
\begin{eqnarray}
\label{eq:1D-cluster-1c} %
&&P \bigl(X_t^- -
X_t^+ = 1 \mbox{ and }\xi _t \bigl(X_t^- + 1\bigr) \xi_t
\bigl(X_t^+ - 1\bigr) = 0
\nonumber
\\[-8pt]
\\[-8pt]
\nonumber
&&\hspace*{49pt} \mbox{for some } t \in(s, \infty) | \inf(
\sigma_+, \sigma_-) = \infty\bigr) = 1.
\end{eqnarray}
Also, each time $X_t^- - X_t^+ = 1$, both particles annihilate at rate
at least
\[
(1/2) \max(0, - a_{21}) + (1/4) \max(0, a_{11} +
a_{12}).
\]
The second inequality in \eqref{eq:1D-cluster-1} implies that this rate
is strictly positive which, together with \eqref{eq:1D-cluster-1c}
and the fact that the process is Markov further implies that the two
particles at $X_t^{\pm}$ annihilate after an almost surely finite time.
In particular,
%
\begin{equation}
\label{eq:1D-cluster-1d} P \bigl(\inf(\tau_+, \tau_-) < \infty| \inf(\sigma_+, \sigma _-)
= \infty\bigr) = 1.
\end{equation}
Combining \eqref{eq:1D-cluster-1b} and \eqref{eq:1D-cluster-1d}, we
deduce that, in every group of four consecutive particles at arbitrary times,
at least one particle is killed after a finite time, therefore there
exists a strictly increasing sequence of almost surely finite times
$s_0 = 0 < s_1 < \cdots< s_n < \cdots$ such that
\[
u (s_n) \leq(1/2) u (s_{n - 1}) \leq(1/4) u
(s_{n - 2}) \leq\cdots \leq(1/2)^n u (s_0)
\leq(1/2)^n.
\]
This shows \eqref{eq:1D-cluster-1a} and completes the proof of the lemma.
\end{pf}

To complete the proof of the theorem, the last step is to prove the
analog of Lemma~\ref{lem:1D-cluster-1} when the second set of
inequalities in \eqref{eq:1D-cluster-1}
does not hold.
This includes in particular all the payoff matrices that satisfy \eqref
{eq:clustering}.
This case is rather delicate since a player of either type cannot
change her strategy whenever her two nearest neighbors and next two
nearest neighbors all four follow the
same strategy.
In particular, two particles next to each other annihilate at a
positive rate only if there is a third particle nearby so the idea of
the proof is to show that we can indeed
bring sets of three consecutive particles together.
Figure~\ref{fig:cluster} gives an illustration of this problem:
boundaries by pair repulse each other and at least three particles are
necessary to induce annihilation.
%
\begin{figure}[b]
\centering
\begin{tabular}{@{}cc@{}}

\includegraphics{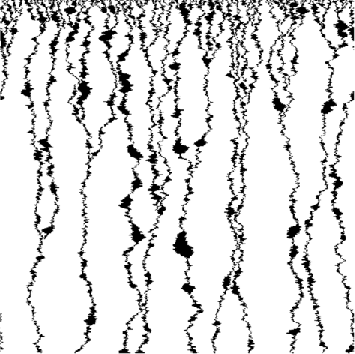}  & \includegraphics{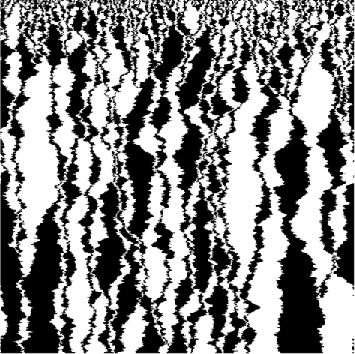}\\
\footnotesize{(a) $A = ((-8, 3), (4, -8))$} & \footnotesize{(b) $A =
((-8, 4), (4, -8))$}
\end{tabular}
\caption{Realizations of the one-dimensional nearest neighbor
spatial game on the torus $\Z/ 600 \Z$ for two different payoff matrices
that satisfy the inequalities in \protect\eqref{eq:clustering}.
Time goes down until time 10,000.}
\label{fig:cluster}
\end{figure}

%
\begin{lemma}
\label{lem:1D-cluster-2}
Let $A \in\mathcal M_2^*$ as in \eqref{eq:measure-zero}.
Then the system clusters if
%
\begin{equation}
\label{eq:1D-cluster-2} a_{22} + a_{21} < a_{11} +
a_{12} < 0 \quad\mbox{and}\quad a_{21} > 0.
\end{equation}
\end{lemma}

\begin{pf}
Following the same approach as in the previous lemma, it suffices to
prove that, starting with a positive density of boundaries,
annihilating events occur in a finite time within a given finite set
of consecutive boundaries.
The main difficulty is that condition \eqref{eq:1D-cluster-2} now
implies that starting with a single type~2 player, the two
resulting boundaries cannot annihilate therefore to prove the
occurrence of annihilating events, we need to look at a set of four
boundaries instead of two like in the proof of the previous lemma.
To begin with, we start from a configuration with infinitely many
type~1 players and exactly four boundaries, which forces the
initial number of type~2 players to be finite, and denote the position
of the boundaries by
\[
X_t^+ < X_t^- < Y_t^+ <
Y_t^-
\]
before an annihilating event has occurred.
The same argument as in the proof of the previous lemma based on the
first inequality in \eqref{eq:1D-cluster-2} implies that
%
\begin{eqnarray}
\label{eq:1D-cluster-2a} %
&&\lim_{h \to0}
h^{-1} E \bigl(\bigl(X_{t + h}^- - X_{t +
h}^+\bigr) -
\bigl(X_t^- - X_t^+\bigr) |
\nonumber
\\[-8pt]
\\[-8pt]
\nonumber
&&\hspace*{29pt}\qquad X_t^- - X_t^+ > 1 \mbox{ and } Y_t^+ -
X_t^- > 1\bigr) < 0.
\end{eqnarray}
The same applies to the two rightmost boundaries,
%
\begin{eqnarray}
\label{eq:1D-cluster-2b} %
&& \lim_{h \to0}
h^{-1} E \bigl(\bigl(Y_{t + h}^- - Y_{t +
h}^+\bigr) -
\bigl(Y_t^- - Y_t^+\bigr) |
\nonumber
\\[-8pt]
\\[-8pt]
\nonumber
&&\qquad \hspace*{29pt} Y_t^- - Y_t^+ > 1 \mbox{ and } Y_t^+ -
X_t^- > 1\bigr) < 0.
\end{eqnarray}
Moreover, by symmetry, we have
%
\begin{eqnarray}
\label{eq:1D-cluster-2c} %
&& \lim_{h \to0}
h^{-1} P \bigl(\bigl(X_{t + h}^- + X_{t + h}^+\bigr) -
\bigl(X_t^- + X_t^+\bigr) = 1 | Y_t^+ -
X_t^- > 1\bigr)
\nonumber
\\[-8pt]
\\[-8pt]
\nonumber
&&\qquad= \lim_{h \to0} h^{-1} P \bigl(\bigl(X_{t + h}^-
+ X_{t + h}^+\bigr) - \bigl(X_t^- + X_t^+\bigr)
= - 1 | Y_t^+ - X_t^- > 1\bigr).
\end{eqnarray}
In words, the midpoint between the two leftmost boundaries evolve
according to a symmetric random walk.
The same holds for the midpoint between the two rightmost boundaries.
To deduce the occurrence of an annihilating event, we distinguish two cases:
\begin{itemize}
\item\emph{Case} 1. Assume \eqref{eq:1D-cluster-2} and $a_{12} < 0$.
In this case, \eqref{eq:1D-cluster-2c} and the recurrence of
one-dimensional symmetric simple random walks imply that
\[
P \bigl(Y_t^+ - X_t^- = 1 \mbox{ for some } t > 0\bigr)
= 1.
\]
Since the event above induces a configuration in which a type~1 player
has two type~2 neighbors, and so a negative payoff
$a_{12} < 0$, each time this event occurs, the two intermediate
boundaries annihilate at a positive rate.
This, together with a basic restart argument, implies the occurrence
of an annihilating event after an almost surely finite time.
\item\emph{Case} 2. Assume \eqref{eq:1D-cluster-2} and $a_{12} > 0$.
In this case, \eqref{eq:1D-cluster-2a}--\eqref{eq:1D-cluster-2c} imply
that, with probability one, we can bring three consecutive
boundaries together: more precisely,
\[
P \bigl(Y_t^+ - X_t^- = 1 \mbox{ and }
\bigl(X_t^- - X_t^+ = 1 \mbox{ or } Y_t^- -
Y_t^+ = 1\bigr) \mbox{ for some } t > 0\bigr) = 1.
\]
Since the event above induces a configuration in which a type~2 player
has two type~1 neighbors, and so a positive payoff
$a_{21} > 0$, each time this event occurs, either the two leftmost
boundaries or the two rightmost boundaries annihilate at a
positive rate.
We again deduce the occurrence of an annihilating event after an
almost surely finite time.
\end{itemize}
The two results above still hold when starting from a translation
invariant distribution with a positive density of boundaries unless the leftmost
of the four boundaries or the rightmost of the four boundaries
annihilate before with another boundary.
In any case, each set of four consecutive boundaries is reduced by one
after an almost surely finite time.
This, together with the exact same arguments as in the proof of the
previous lemma, establishes the desired result.
\end{pf}

Lemmas \ref{lem:1D-cluster-1}--\ref{lem:1D-cluster-2} imply clustering
for all $A \in\mathcal M_2^*$ with $a_{11} + a_{12} > a_{22} + a_{21}$.
The second part of the theorem directly follows by also using some
obvious symmetry.


\section{Proof of Theorem~\texorpdfstring{\protect\ref{th:biased}}{1}}
\label{sec:biased}

This section is devoted to the proof of Theorem~\ref
{th:biased}, which relies on a standard coupling argument between
the spatial game and a biased voter model that favors individuals of type~1.
Recall that the biased voter model \cite{bramson_griffeath_1980,bramson_griffeath_1981} is the spin system with flip rate
\[
c_{\mathrm{BV}} (x, \xi) = \mu_1 f_1 (x, \xi) \ind
\bigl\{\xi(x) = 2 \bigr\} + \mu _2 f_2 (x, \xi) \ind\bigl
\{\xi(x) = 1 \bigr\}
\]
for which individuals of type~1 win whenever $\mu_1 > \mu_2$.
Recall also that the spatial game reduces to such a spin system if and
only if the payoff received by players of either type is
constant regardless of the spatial configuration.
In particular, strategy~1 wins whenever
\[
a_{11} = a_{12} > a_{21} = a_{22}.
\]
For all other parameters, the dynamics is more complicated but the
process can be coupled with a biased voter model that favors
type~1 individuals in a certain parameter region.
To make this argument rigorous and prove Theorem~\ref{th:biased}, we
introduce the payoff functions
\begin{eqnarray*}
\phi_1 (z) &:= & a_{12}
(z / N) + a_{11} (1 - z / N) = (a_{12} - a_{11}) (z
/ N) + a_{11},
\\
\phi_2 (z) &:= & a_{22} (z / N) + a_{21} (1 - z
/ N) = (a_{22} - a_{21}) (z / N) + a_{21}.
\end{eqnarray*}
The coupling argument is given in the proof of the following lemma.

%
\begin{lemma}
\label{lem:biased-voter}
Assume that $a_{12} > a_{21}$.
Then, strategy~1 wins whenever
%
\begin{equation}
\label{eq:biased-voter-0} \max\bigl(\phi_2 (z) \dvtx z \in\{0, 1, \ldots, N - 1
\}\bigr) < \min\bigl(\phi _1 (z) \dvtx z \in\{1, 2, \ldots, N \}
\bigr).
\end{equation}
\end{lemma}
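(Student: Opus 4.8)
The plan is to dominate the spatial game from below, in its number of type~1 players, by a biased voter model in which type~1 is favored, and to transport the conclusion of the lemma through the coupling. Writing $t^+ := \max \,(t, 0)$ and $t^- := \max \,(-t, 0)$, set
$$ m_1 \ := \ \min \,(\phi_1 (z) : z \in \{1, 2, \ldots, N \}) \qquad \hbox{and} \qquad M_2 \ := \ \max \,(\phi_2 (z) : z \in \{0, 1, \ldots, N - 1 \}), $$
so that hypothesis \eqref{eq:biased-voter-0} is exactly $M_2 < m_1$, and let $\mu_1 := m_1^+ + M_2^-$ and $\mu_2 := M_2^+ + m_1^-$. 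Then $\mu_1, \mu_2 \geq 0$ and, since $t = t^+ - t^-$ for all $t$,
$$ \mu_1 - \mu_2 \ = \ (m_1^+ - m_1^-) - (M_2^+ - M_2^-) \ = \ m_1 - M_2 \ > \ 0, $$
hence $\mu_1 > \mu_2 \geq 0$. Let $\xi_t$ be the biased voter model with parameters $(\mu_1, \mu_2)$, for which strategy~1 wins. The aim is to realize $\eta_t$ and $\xi_t$ on a common probability space, with $\xi_0 = \eta_0$, so that the inclusion $\{x : \xi_t (x) = 1 \} \subseteq \{x : \eta_t (x) = 1 \}$ holds for all $t \geq 0$; strategy~1 will then win for $\eta_t$ as well. (When $a_{11} = a_{12}$ and $a_{22} = a_{21}$, the payoffs $\phi_1, \phi_2$ are constant, $m_1$ and $M_2$ are these constants, and all the inequalities below become equalities, so the construction specializes to the biased voter reduction recalled before the lemma.)

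The analytic core is a pair of estimates, valid for every configuration, comparing the flip rates of the game with those of the biased voter. Writing $z_x := \card \,\{y \in N_x : \eta (y) = 2 \}$, a direct inspection of the generator \eqref{eq:brown-hansell} shows that a site $x$ with $\eta (x) = 2$ becomes a type~1 site at rate
$$ c_{2 \to 1} (x, \eta) \ = \ N^{-1} \left( \sum_{y \in N_x : \,\eta (y) = 1} \phi_1 (z_y)^+ \ + \ (N - z_x) \,\phi_2 (z_x)^- \right) $$
(a birth contribution from the type~1 neighbors with positive payoff, plus a death contribution from the negative payoff of $x$ itself), while a site $x$ with $\eta (x) = 1$ becomes a type~2 site at rate
$$ c_{1 \to 2} (x, \eta) \ = \ N^{-1} \left( \sum_{y \in N_x : \,\eta (y) = 2} \phi_2 (z_y)^+ \ + \ z_x \,\phi_1 (z_x)^- \right). $$
Four elementary bookkeeping facts now do the work: a type~1 neighbor $y$ of a type~2 site $x$ has $z_y \geq 1$, since $x$ itself is a type~2 neighbor of $y$, hence $\phi_1 (z_y)^+ \geq m_1^+$; a type~2 neighbor $y$ of a type~1 site $x$ has $z_y \leq N - 1$, hence $\phi_2 (z_y)^+ \leq M_2^+$; a type~2 site having at least one type~1 neighbor has $z_x \leq N - 1$, hence $\phi_2 (z_x) \leq M_2$ and so $\phi_2 (z_x)^- \geq M_2^-$; and a type~1 site having at least one type~2 neighbor has $z_x \geq 1$, hence $\phi_1 (z_x) \geq m_1$ and so $\phi_1 (z_x)^- \leq m_1^-$. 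Substituting these into the two displays, and recalling that $f_j (x, \eta) = N^{-1} \,\card \,\{y \in N_x : \eta (y) = j \}$, one obtains, uniformly over all configurations,
$$ c_{2 \to 1} (x, \eta) \ \geq \ \mu_1 \,f_1 (x, \eta) \quad \hbox{if} \ \ \eta (x) = 2, \qquad c_{1 \to 2} (x, \eta) \ \leq \ \mu_2 \,f_2 (x, \eta) \quad \hbox{if} \ \ \eta (x) = 1. $$

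With these inequalities the coupling is the standard basic coupling for spin systems, and preservation of $\{\xi_t = 1 \} \subseteq \{\eta_t = 1 \}$ comes down to comparing rates at a site $x$ with $\eta (x) = \xi (x)$. If $\eta (x) = \xi (x) = 2$, then $f_1 (x, \xi) \leq f_1 (x, \eta)$ by the inclusion of the type~1 regions, so the biased voter turns $x$ into a type~1 site at rate $\mu_1 f_1 (x, \xi) \leq \mu_1 f_1 (x, \eta) \leq c_{2 \to 1} (x, \eta)$, no faster than $\eta$ does; if $\eta (x) = \xi (x) = 1$, then $f_2 (x, \eta) \leq f_2 (x, \xi)$, so $\eta$ turns $x$ into a type~2 site at rate $c_{1 \to 2} (x, \eta) \leq \mu_2 f_2 (x, \eta) \leq \mu_2 f_2 (x, \xi)$, no faster than the biased voter does; and a site with $\eta (x) = 1$ and $\xi (x) = 2$ is consistent with the inclusion and no single transition can break it. Hence the inclusion persists for all $t$. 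Since strategy~1 wins for $\xi_t$ started from any Bernoulli product measure with positive density of each strategy (strategy~1 survives and strategy~2 goes extinct), the same holds for $\eta_t$: strategy~1 survives because the type~1 region of $\eta_t$ contains that of $\xi_t$, and strategy~2 goes extinct because the type~2 region of $\eta_t$ is contained in that of $\xi_t$. This proves the lemma. The point requiring care is that the game's flip rate at $x$ is not a function of $f_1 (x, \eta)$ alone (the process is not a biased voter), so it must be squeezed between biased voter rates uniformly over the whole configuration; carrying out that squeeze via the occupation counts $z_x, z_y$ is where the real content sits.
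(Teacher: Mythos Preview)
Your proof is correct and follows essentially the same approach as the paper: bound the spatial game's flip rates by biased voter rates with parameters $\mu_1 = m_1^+ + M_2^-$ and $\mu_2 = M_2^+ + m_1^-$ (these are exactly the paper's $\mu_1,\mu_2$), then invoke the standard coupling (the paper cites Theorem~III.1.5 of Liggett). Your derivation of $\mu_1 > \mu_2$ via the identity $t = t^+ - t^-$ is in fact cleaner than the paper's route through equations \eqref{eq:biased-voter-5}--\eqref{eq:biased-voter-7}, but the substance is the same.
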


\begin{pf}
Denoting by $c_{\mathrm{SG}} (x, \eta)$ the flip rate of the spatial game, we have
%
\begin{equation}
\label{eq:biased-voter-1} c_{\mathrm{SG}} (x, \eta) = c_{\mathrm{BV}} (x, \xi) = 0
\qquad\mbox{when } f_{\eta(x)} (x, \eta) = f_{\xi(x)} (x, \xi) = 1.
\end{equation}
Now, observe that the player at $x$ may flip $2 \to1$ because she has
a negative payoff and so a positive death rate or because
she has a neighbor following strategy~1 that has a positive payoff and
so a positive birth rate.
In particular, given that the player at vertex $x$ follows strategy~2
and has at least one neighbor following strategy~1, the rate
at which the strategy at $x$ flips is
%
\begin{eqnarray}
\label{eq:biased-voter-2} %
&&\max\bigl(0, - \phi(x, \eta)\bigr)
f_1 (x, \eta) + N^{-1} \sum_{y \sim
x}
\max\bigl(0, \phi(y, \eta)\bigr) \ind\bigl\{\eta(y) = 1 \bigr\}
\nonumber
\\[-8pt]
\\[-8pt]
\nonumber
&&\qquad\geq\min_{z \neq N} \max\bigl(0, - \phi_2 (z)\bigr)
f_1 (x, \eta) + \min_{z
\neq0} \max\bigl(0,
\phi_1 (z)\bigr) f_1 (x, \eta).
\end{eqnarray}
Similarly, given that the player at vertex $x$ follows strategy~1 and
has at least one neighbor following strategy~2,
the rate at which the strategy at $x$ flips is
%
\begin{eqnarray}
\label{eq:biased-voter-3} %
&& \max\bigl(0, - \phi(x, \eta)\bigr)
f_2 (x, \eta) + N^{-1} \sum_{y \sim
x}
\max\bigl(0, \phi(y, \eta)\bigr) \ind\bigl\{\eta(y) = 2 \bigr\}
\nonumber
\\[-8pt]
\\[-8pt]
\nonumber
&&\qquad\leq\max_{z \neq0} \max\bigl(0, - \phi_1 (z)\bigr)
f_2 (x, \eta) + \max_{z
\neq N} \max\bigl(0,
\phi_2 (z)\bigr) f_2 (x, \eta).
\end{eqnarray}
Combining \eqref{eq:biased-voter-1}--\eqref{eq:biased-voter-3}, we
obtain that strategy~1 wins whenever
%
\begin{eqnarray}
\label{eq:biased-voter-4} %
\mu_2&:=& \max
_{z \neq0} \max\bigl(0, - \phi_1 (z)\bigr) + \max
_{z \neq
N} \max\bigl(0, \phi_2 (z)\bigr)
\nonumber
\\[-8pt]
\\[-8pt]
\nonumber
&<& \min_{z
\neq N} \max\bigl(0, - \phi_2 (z)\bigr) +
\min_{z \neq0} \max \bigl(0, \phi_1 (z)\bigr) =\dvtx
\mu_1
\end{eqnarray}
since, under this assumption, if $\eta(x) \leq\xi(x)$ for all $x \in
\Z^d$, then
\begin{eqnarray*} c_{\mathrm{SG}} (x, \eta)& \leq&\mu_2
f_2 (x, \xi) = c_{\mathrm{BV}} (x, \xi) \qquad \mbox{when }  \eta(x) =
\xi(x) = 1,
\\
c_{\mathrm{SG}} (x, \eta) &\geq&\mu_1 f_1 (x, \xi) =
c_{\mathrm{BV}} (x, \xi) \qquad \mbox{when } \eta(x) = \xi(x) = 2,
\end{eqnarray*}
which, according to Theorem III.1.5 in \cite{liggett_1985}, implies
that the set of type~1 players dominates stochastically its
counterpart in a biased voter model that favors type~1.
To complete the proof, it remains to show that \eqref
{eq:biased-voter-0} implies \eqref{eq:biased-voter-4}.
Note that \eqref{eq:biased-voter-4} is equivalent to
%
\begin{eqnarray}
\label{eq:biased-voter-5} %
&&\max_{z \neq N} \max
\bigl(0, \phi_2 (z)\bigr) - \min_{z \neq N} \max \bigl(0,
- \phi_2 (z)\bigr)
\nonumber
\\[-8pt]
\\[-8pt]
\nonumber
&&\qquad< \min_{z \neq0} \max\bigl(0, \phi_1 (z)\bigr) - \max
_{z \neq0} \max \bigl(0, - \phi_1 (z)\bigr).
\end{eqnarray}
Note also that the left-hand side of \eqref{eq:biased-voter-5} reduces to
%
\begin{eqnarray}
\label{eq:biased-voter-6} %
&&\max_{z \neq N} \max
\bigl(0, \phi_2 (z)\bigr) - \min_{z \neq N} \max \bigl(0,
- \phi_2 (z)\bigr)\nonumber \\
&&\qquad= \max\Bigl(0, \max_{z
\neq N}
\phi_2 (z)\Bigr) - \max\Bigl(0, \min_{z \neq
N} \bigl(-
\phi_2 (z)\bigr)\Bigr)
\nonumber
\\[-8pt]
\\[-8pt]
\nonumber
&&\qquad = \max\Bigl(0, \max_{z \neq N}
\phi_2 (z)\Bigr) - \max\Bigl(0, - \max_{z
\neq N}
\phi_2 (z)\Bigr)
\\
&&\qquad= \max\Bigl(0, \max_{z \neq N} \phi_2 (z)\Bigr) + \min
\Bigl(0, \max_{z \neq
N} \phi_2 (z)\Bigr) = \max
_{z \neq N} \phi_2 (z). \nonumber
\end{eqnarray}
Similarly, for the payoff of type~1 players, we have
%
\begin{equation}
\label{eq:biased-voter-7} %
 \min_{z \neq0} \max
\bigl(0, \phi_1 (z)\bigr) - \max_{z \neq0} \max
\bigl(0, - \phi_1 (z)\bigr) = \min_{z \neq0}
\phi_1 (z).
\end{equation}
Since \eqref{eq:biased-voter-0}, \eqref{eq:biased-voter-6} and \eqref
{eq:biased-voter-7} imply \eqref{eq:biased-voter-5} and
then \eqref{eq:biased-voter-4}, the proof is complete.
\end{pf}

In the following lemma, we complete the proof of Theorem~\ref
{th:biased} based on \eqref{eq:biased-voter-0}.

%
\begin{lemma}
\label{lem:biased-case}
Assume that $a_{12} > a_{21}$.
Then, strategy~1 wins whenever
%
\begin{equation}
\label{eq:biased-case-0} \max(a_{22}, a_{21}) + a_{21} (N -
1)^{-1} < \min (a_{11}, a_{12}) + a_{12}
(N - 1)^{-1}.
\end{equation}
\end{lemma}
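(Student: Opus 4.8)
The plan is to derive Lemma~\ref{lem:biased-case} from Lemma~\ref{lem:biased-voter} by showing that the explicit inequality \eqref{eq:biased-case-0} is just a repackaging of condition \eqref{eq:biased-voter-0}. Recall that the payoff functions $\phi_1(z) = (a_{12}-a_{11})(z/N) + a_{11}$ and $\phi_2(z) = (a_{22}-a_{21})(z/N) + a_{21}$ are affine in $z$, so their extrema over the finite ranges appearing in \eqref{eq:biased-voter-0} are attained at the endpoints. Concretely, $\min_{z \in \{1,\ldots,N\}} \phi_1(z) = \min\big(\phi_1(1),\phi_1(N)\big) = \min\big(\tfrac{a_{12}+(N-1)a_{11}}{N},\,a_{12}\big)$ and $\max_{z \in \{0,\ldots,N-1\}} \phi_2(z) = \max\big(\phi_2(0),\phi_2(N-1)\big) = \max\big(a_{21},\,\tfrac{a_{21}+(N-1)a_{22}}{N}\big)$. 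Hence \eqref{eq:biased-voter-0} is equivalent to the four scalar inequalities obtained by comparing each of the two candidate values of $\max\phi_2$ with each of the two candidate values of $\min\phi_1$.

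\textbf{Key steps.} First, clear denominators in those four inequalities: they become (i) $a_{21} < a_{12}$, (ii) $(N-1)(a_{21}-a_{11}) < a_{12}-a_{21}$, (iii) $(N-1)(a_{22}-a_{12}) < a_{12}-a_{21}$, and (iv) $(N-1)(a_{22}-a_{11}) < a_{12}-a_{21}$. Second, rewrite \eqref{eq:biased-case-0}: multiplying through by $N-1>0$ and using the identity $\max(a_{22},a_{21}) - \min(a_{11},a_{12}) = \max\big(a_{22}-a_{11},\,a_{22}-a_{12},\,a_{21}-a_{11},\,a_{21}-a_{12}\big)$, condition \eqref{eq:biased-case-0} reads $(N-1)\max\big(a_{22}-a_{11},\,a_{22}-a_{12},\,a_{21}-a_{11},\,a_{21}-a_{12}\big) < a_{12}-a_{21}$. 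This is precisely the conjunction of (ii), (iii), (iv) together with the extra clause $(N-1)(a_{21}-a_{12}) < a_{12}-a_{21}$, which holds automatically since the left side is nonpositive and the right side is positive under the standing hypothesis $a_{12}>a_{21}$. Third, observe that (i) is exactly that standing hypothesis. Therefore \eqref{eq:biased-case-0} together with $a_{12}>a_{21}$ yields all of (i)--(iv), hence \eqref{eq:biased-voter-0}, and Lemma~\ref{lem:biased-voter} concludes that strategy~1 wins.

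\textbf{Main obstacle.} There is no genuine conceptual difficulty; the only points requiring care are (a) verifying that the affine functions $\phi_1$ and $\phi_2$ attain their extrema at the endpoints $z \in \{1,N\}$ and $z \in \{0,N-1\}$ respectively, which is what makes the reduction to four linear inequalities legitimate regardless of the sign of the slopes, and (b) noticing that the fourth term in the expanded $\max$ in \eqref{eq:biased-case-0} is redundant given $a_{12}>a_{21}$, so that \eqref{eq:biased-case-0} is in fact equivalent to \eqref{eq:biased-voter-0} and not merely sufficient for it. Everything else is bookkeeping with linear expressions.
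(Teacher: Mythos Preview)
Your proof is correct and follows essentially the same approach as the paper: both reduce the lemma to Lemma~\ref{lem:biased-voter} by exploiting the affinity of $\phi_1,\phi_2$ so that the extrema in \eqref{eq:biased-voter-0} are attained at the endpoints $z\in\{1,N\}$ and $z\in\{0,N-1\}$. The only difference is organizational: the paper splits into four cases according to the monotonicity of $\phi_1$ and $\phi_2$ and checks each directly, whereas you handle all four simultaneously via the identity $\max(a_{22},a_{21})-\min(a_{11},a_{12})=\max_{i,j}(a_{2i}-a_{1j})$, which is a bit cleaner but amounts to the same computation.
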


\begin{pf}
This directly follows from Lemma~\ref{lem:biased-voter} by showing
that the parameter region in which the inequality
in \eqref{eq:biased-voter-0} holds is exactly \eqref{eq:biased-case-0}.
To re-write \eqref{eq:biased-voter-0} explicitly in terms of the
payoffs, we distinguish four cases depending on the
monotonicity of the functions $\phi_1$ and $\phi_2$.
\begin{itemize}
\item\emph{Case} 1. When $\max(a_{22}, a_{21}) = a_{21}$ and $\min
(a_{11}, a_{12}) = a_{12}$, both payoff
functions are decreasing; therefore, according to the previous lemma,
strategy~1 wins whenever
\[
 \max_{z \neq N} \phi_2
(z) = \phi_2 (0) = a_{21} < a_{12} =
\phi_1 (N) = \min_{z \neq0} \phi_1 (z),
\]
which is always true under our general assumption $a_{12} > a_{21}$.

\item\emph{Case} 2. When $\max(a_{22}, a_{21}) = a_{21}$ and $\min
(a_{11}, a_{12}) = a_{11}$, according to the
previous lemma, strategy~1 wins whenever
\begin{eqnarray*}
\max_{z \neq N} \phi_2
(z) &=& \phi_2 (0) = a_{21} = (1 - 1/N) \max(a_{22},
a_{21}) + (1/N) a_{21}
\\
&< &(1 - 1/N) \min(a_{11}, a_{12}) + (1/N) a_{12}
\\
&= &(1 - 1/N) a_{11} + (1/N) a_{12} = \phi_1 (1)
= \min_{z
\neq0} \phi_1 (z),
\end{eqnarray*}
which is true whenever \eqref{eq:biased-case-0} holds.
\item\emph{Case} 3. When $\max(a_{22}, a_{21}) = a_{22}$ and $\min
(a_{11}, a_{12}) = a_{12}$, according to the
previous lemma, strategy~1 wins whenever
\begin{eqnarray*} \max_{z \neq N} \phi_2
(z) &=& \phi_2 (N - 1) = (1 - 1/N) a_{22} + (1/N)
a_{21}
\\
&=& (1 - 1/N) \max(a_{22}, a_{21}) + (1/N) a_{21}
\\
&<& (1 - 1/N) \min(a_{11}, a_{12}) + (1/N) a_{12} =
a_{12} = \phi_1 (N) = \min_{z \neq0}
\phi_1 (z),
\end{eqnarray*}
which is true whenever \eqref{eq:biased-case-0} holds.
\item\emph{Case} 4. When $\max(a_{22}, a_{21}) = a_{22}$ and $\min
(a_{11}, a_{12}) = a_{11}$, according to the
previous lemma, strategy~1 wins whenever
\begin{eqnarray*}\max_{z \neq N} \phi_2
(z) &=& \phi_2 (N - 1) = (1 - 1/N) a_{22} + (1/N)
a_{21}
\\
&=& (1 - 1/N) \max(a_{22}, a_{21}) + (1/N) a_{21}\\
& <&
(1 - 1/N) \min (a_{11}, a_{12}) + (1/N) a_{12}
\\
&=& (1 - 1/N) a_{11} + (1/N) a_{12} = \phi_1 (1)
= \min_{z
\neq0} \phi_1 (z),
\end{eqnarray*}
which is true whenever \eqref{eq:biased-case-0} holds.
\end{itemize}
This completes the proof of the lemma and the proof of Theorem~\ref{th:biased}.
\end{pf}


\section{Proof of Theorem~\texorpdfstring{\protect\ref{th:coex}}{5}}
\label{sec:coex}

The common background behind the proofs of the remaining three
theorems is the use of a block construction, though
the arguments required to indeed be able to apply this technique
strongly differ among these theorems.
The idea of the block construction is to couple a certain collection
of good events related to the process properly rescaled in space
and time with the set of open sites of oriented site percolation on
the directed graph $\mathcal H_1$ with vertex set
\[
H:= \bigl\{(z, n) \in\Z^d \times\Z_+ \dvtx z_1 +
z_2 + \cdots + z_d + n \mbox { is even} \bigr\}
\]
and in which there is an oriented edge
\begin{eqnarray}
 (z, n) \to\bigl(z', n'
\bigr) \nonumber\\
\eqntext{\mbox{if and only if }
z' = z \pm e_i \mbox{ for some } i = 1, 2, \ldots, d
\mbox{ and } n' = n + 1, }
\end{eqnarray}
where $e_i$ is the $i$th unit vector.
See the left-hand side of Figure~\ref{fig:graphs} for a picture in $d
= 1$.
For a definition of oriented site percolation, we refer to Durrett
\cite{durrett_1995} where the block construction is also reviewed
in detail and employed to study different spatial processes.
The existence of couplings between the spatial game and oriented
percolation relies, among other things, on the application of
Theorem~4.3 in~\cite{durrett_1995} which requires certain good events
to be measurable with respect to a so-called graphical
representation of the process.
Therefore, we need to construct the spatial game from a graphical
representation, though we will not use it explicitly except in the
last section.
To construct the process graphically, we first observe that, in view
of \eqref{eq:brown-hansell}, the maximum rate at which a
player gives birth over all possible configurations is given by
%
\begin{eqnarray}
\label{eq:max-birth} %
\max_{\eta}
\phi(0, \eta) \ind\bigl\{\phi(0, \eta) > 0 \bigr\} &=& \max_{\eta}
\max\bigl(0, \phi(0, \eta)\bigr)
\nonumber
\\[-8pt]
\\[-8pt]
\nonumber
&=& \max\Bigl(0, \max_{\eta} \phi(0, \eta)\Bigr) = \max\Bigl(0,
\max_{i,
j} a_{ij}\Bigr).
\end{eqnarray}
Similarly, the maximum rate at which a player dies is
%
\begin{eqnarray}
\label{eq:max-death} %
 \max_{\eta}
\bigl(- \phi(0, \eta) \ind\bigl\{\phi(0, \eta) < 0 \bigr\}\bigr)& =& \max
_{\eta} \max\bigl(0, - \phi(0, \eta)\bigr)
\nonumber
\\
&=& \max\Bigl(0, \max_{\eta} \bigl(- \phi(0, \eta)\bigr)\Bigr)\\
& =&
\max\Bigl(0, \max_{i, j} (- a_{ij})\Bigr). \nonumber
\end{eqnarray}
From \eqref{eq:max-birth}--\eqref{eq:max-death}, we deduce that the
maximum rate at which a player either gives birth or dies over
all the possible configurations is given by
\[
 \mathfrak m:= \max\Bigl(\max_{i, j}
a_{ij}, \max_{i, j} (- a_{ij})\Bigr) = \max
_{i, j} |a_{ij}|.
\]
The process is then constructed graphically from a collection of
independent Poisson processes with intensity $\mathfrak m$
by using the following well-known property: extracting points
independently with probability $p$ from a Poisson point process
with intensity $\mathfrak m$ results in a Poisson point process with
reduced intensity $p \times\mathfrak m$.
More precisely, for all $x \in\Z^d$ and $n > 0$:
\begin{itemize}
\item we let $T_n (x) =$ the $n$th arrival time of a Poisson process
with rate $\mathfrak m$;
\item we let $U_n (x) =$ a uniform random variable over the interval
$(0, \mathfrak m)$;
\item we let $V_n (x) =$ a uniform random variable over the interaction
neighborhood~$N_x$.
\end{itemize}
At the arrival times $T_n (x)$, we draw
%
\begin{equation}
\label{eq:coex-arrow} \mbox{an arrow } V_n (x) \to x \mbox{ with the label }
U_n (x)
\end{equation}
and say that this arrow is \emph{active} whenever
\[
U_n (x) < \bigl|\phi(x, \eta_{t-})\bigr| \qquad\mbox{where } t:=
T_n (x).
\]
Staring from any initial configuration, an argument due to Harris \cite
{harris_1972} implies that the process can be constructed going forward
in time by setting
\[
\eta_t (x) = \eta_t \bigl(V_n (x)\bigr):=
\cases{ %
 \eta_{t-} (x), & \quad $\mbox{when }
\phi(x, \eta_{t-}) > 0 \mbox{ and \eqref{eq:coex-arrow} is active},$
\vspace*{2pt}\cr
\eta_{t-} \bigl(V_n (x)\bigr), & \quad$\mbox{when }  \phi(x,
\eta_{t-}) < 0 \mbox { and \eqref{eq:coex-arrow} is active},$}
\]
where again $t:= T_n (x)$.
In case the arrow in \eqref{eq:coex-arrow} is not active, the update
is canceled.
In order to simplify a little bit some cumbersome expressions in the
proofs of the remaining three theorems, we identify from now on the spatial
game with the set of the type~1 players, which is a common approach to
study spin systems.
We now focus on the proof of our coexistence result.
The first step is to establish a strong form of survival of the type~1
players when
%
\begin{equation}\quad
\label{eq:coex-limit} (M, d) \neq(1, 1)\quad \mbox{and}\quad a_{12} = a_{21}
= 0 \quad\mbox{and}\quad c (M, d) a_{22} < a_{11} < - 1,
\end{equation}
which is done by comparing the spatial game $(\xi_t)$ with $a_{12} =
a_{21} = 0$ and one dependent oriented site percolation.
The reason for studying first the process under assumption \eqref
{eq:coex-limit} is to prevent extinction, that is, ensure a weak form
of survival,
of the set of type~1 players, which facilitates our proof of strong survival.
The full result is then deduced by using a perturbation argument.
To make the idea rigorous, we declare site $(z, n) \in H$ to be \emph{occupied} whenever
\[
\xi_{cn K} \cap B_2 (Kz, 3K/5) \neq\varnothing,
\]
where $c > 0$ is a constant, and $K$ a large integer that will be fixed
later and where $B_2 (x, r)$ is the Euclidean ball with center $x$ and
radius $r$.
Also we set
%
\begin{equation}
\label{eq:level} \mathbb X_n:= \bigl\{z \in\Z^d \dvtx (z,
n) \in H \mbox{ and is occupied} \bigr\}.
\end{equation}
%
In view of Theorem~4.3 in Durrett \cite{durrett_1995} and the fact
that the spatial game is translation invariant in space and time,
to prove that the process $\mathbb X_n$ dominates stochastically
supercritical oriented site percolation, the main step is to show
that the conditional probability
\[
P \bigl((e_1, 1) \mbox{ is occupied } | (0, 0) \mbox{ is occupied}
\bigr)
\]
can be made arbitrarily close to one by choosing $K$ sufficiently large.
To estimate this conditional probability, we start with a single
type~1 player at site 0 and keep track of a specific player of type~1
that moves to the target $K e_1$.
Let $\pi_1$ be the projection onto the first axis, and denote by
\[
r_t:= \max\bigl\{\pi_1 (x) \dvtx x \in
\xi_t \bigr\} \quad\mbox{and}\quad R_t:= \bigl\{x \in
\xi_t \dvtx \pi_1 (x) = r_t \bigr\}
\]
the first coordinate of the rightmost type~1 players and the set of the
rightmost type~1 players, respectively.
Since $a_{12} = a_{21} = 0$, this set is always nonempty.
In one dimension, it reduces to a singleton whereas in higher
dimensions it may have more sites.
In any case, we let $X_t$ be the position of one of the rightmost
type~1 players chosen uniformly at random among the ones who
are the closest to the first axis, and call this player the \emph{tagged player}.
The key to proving that the set of type~1 players spreads in the
direction of $e_1$ is given by the next lemma.

%
\begin{lemma}
\label{lem:coex-drift}
Assume \eqref{eq:coex-limit}. Then there exists $\mu> 0$ such that
\[
\lim_{h \to0} h^{-1} E
\bigl(\pi_1 (X_{t + h}) - \pi _1
(X_t) | \xi_t\bigr) \geq\mu\qquad\mbox{almost surely}.
\]
\end{lemma}

\begin{pf}
To begin with, we introduce the process
\[
L_t:= \inf\bigl\{\pi_1 (X_t - x) \dvtx x
\in\xi_t \mbox{ and } x \neq X_t \bigr\}.
\]
In words, the process $L_t$ keeps track of the distance along the first
axis between the tagged player and the second rightmost player
of type~1, which is also the length of a jump to the left of the
projection on the first axis~$\pi_1 (X_t)$ at the time the tagged player
changes her strategy.
We refer the reader to Figure~\ref{fig:coex-1} for a picture
describing the neighborhood of $X_t$.
To prove the lemma, we distinguish two cases depending on the value of
the process $L_t$.

\begin{figure}[b]

\includegraphics{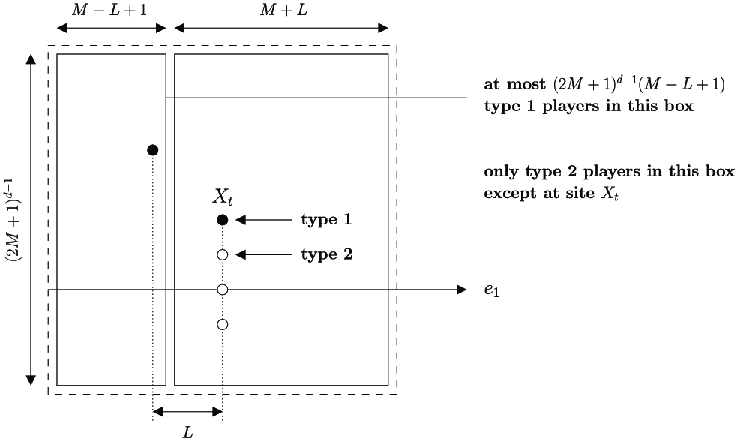}

\caption{Picture related to the proof of Lemma~\protect\ref{lem:coex-drift}.}
\label{fig:coex-1}
\end{figure}

\emph{Case} 1. Assume that $L_t = L \in[1, M]$.
Then the number of type~1 neighbors of the tagged player is bounded by
the number of vertices in the leftmost rectangle in the picture,
\[
f_1 (X_t, \xi_t) \leq N^{-1}
(2M + 1)^{d - 1} (M - L + 1).
\]
Therefore the rate at which the strategy at $X_t$ changes from~$1 \to
2$ is
\begin{eqnarray*}c (X_t, \xi_t) & \leq&
N^{-2} (- a_{11}) \max\bigl\{z (N - z) \dvtx z \leq(2M +
1)^{d - 1} (M - L + 1) \bigr\}
\\
& = & N^{-2} (- a_{11}) (2M + 1)^{d - 1} (M - L + 1)
\bigl((2M + 1)^{d - 1} (M + L) - 1\bigr).
 \end{eqnarray*}
Since such a transition causes the tagged player to jump~$L$ units to
the left or equivalently the first coordinate~$\pi_1 (X_t)$ of the
process~$X_t$ to decrease by the amount $L$, the previous inequality
also gives the following bound almost surely on the transition rate:
%
\begin{eqnarray}\qquad
\label{eq:jump-left} %
&&\lim_{h \to0}
h^{-1} P \bigl(\pi_1 (X_{t + h}) -
\pi_1 (X_t) = - L | \xi _t \mbox{ and }
L_t = L \in[1, M]\bigr)
\nonumber
\\[-8pt]
\\[-8pt]
\nonumber
&&\qquad\leq N^{-2} (- a_{11}) (2M + 1)^{d - 1} (M - L + 1)
\bigl((2M + 1)^{d - 1} (M + L) - 1\bigr).
\end{eqnarray}
In addition, each site $x$ occupied by a player of type~2 in the
neighborhood of $X_t$ has at least one neighbor of type~1, namely the tagged
player, therefore the rate at which the strategy at such a
neighbor~$x$ changes from~$2 \to1$ is at least equal to
\begin{eqnarray*} c (x, \xi_t) & \geq&
N^{-2} (- a_{22}) \min\bigl\{z (N - z) \dvtx z \neq0 \bigr\}
\\
& = & N^{-2} (- a_{22}) (N - 1) = N^{-2} (-
a_{22}) \bigl((2M + 1)^d - 2\bigr).
\end{eqnarray*}
Since such a transition causes the tagged player to jump to $x$ when
$\pi_1 (x) > \pi_1 (X_t)$ and since the number of such neighbors
of the tagged player is given by
\[
\card\bigl\{x \in N_{X_t} \dvtx \pi_1 (x) =
\pi_1 (X_t) + j \mbox{ and } x \notin\xi_t
\bigr\} = (2M + 1)^{d - 1}
\]
for all $j = 1, 2, \ldots, M$, we deduce that
%
\begin{eqnarray}
\label{eq:jump-right} %
&&\lim_{h \to0}
h^{-1} P \bigl(\pi_1 (X_{t + h}) -
\pi_1 (X_t) = j | \xi_t\bigr)
\nonumber
\\[-8pt]
\\[-8pt]
\nonumber
&&\qquad\geq N^{-2} (- a_{22}) \bigl((2M + 1)^d - 2
\bigr) (2M + 1)^{d - 1}
\end{eqnarray}
almost surely for all $j = 1, 2, \ldots, M$.
Using as previously mentioned that \eqref{eq:jump-left} is the only
transition that can decrease the first coordinate of the tagged player
and summing the transition rates in \eqref{eq:jump-right} over all the
possible values of $j$, we deduce that, almost surely,
\begin{eqnarray*}&& \lim_{h \to0} h^{-1} E
\bigl(\pi_1 (X_{t + h}) - \pi_1
(X_t) | \xi_t \mbox{ and } L_t = L \in[1, M]
\bigr)
\\
&&\qquad\geq(- L) \lim_{h \to0} h^{-1} P \bigl(
\pi_1 (X_{t + h}) - \pi _1 (X_t) = -
L | \xi_t \mbox{ and } L_t = L \in[1, M]\bigr)
\\
&&\qquad\quad{}+ \sum_{j = 1}^M j \lim
_{h \to0} h^{-1} P \bigl(\pi_1
(X_{t + h}) - \pi_1 (X_t) = j |
\xi_t\bigr)
\\
&&\qquad\geq(- L) N^{-2} (- a_{11}) (2M + 1)^{d - 1} (M - L +
1) \bigl((2M + 1)^{d - 1} (M + L) - 1\bigr)
\\
&&\qquad\quad{}+ \sum_{j =
1}^M j N^{-2} (-
a_{22}) \bigl((2M + 1)^d - 2\bigr) (2M + 1)^{d
- 1}.
\end{eqnarray*}
Expanding and simplifying the right-hand side gives
\begin{eqnarray*}&&\lim_{h \to0} h^{-1} E
\bigl(\pi_1 (X_{t + h}) - \pi_1
(X_t) | \xi_t \mbox{ and } L_t = L \in[1, M]
\bigr)
\\
&&\qquad\geq N^{-2} (2M + 1)^{d - 1} \bigl[a_{11} L (M - L +
1) \bigl((2M + 1)^{d - 1} (M + L) - 1\bigr)
\\
&&\hspace*{124pt}\qquad{}- a_{22} (1/2) M (M + 1) \bigl((2M + 1)^d - 2\bigr)
\bigr].
\end{eqnarray*}
Using that $L (M - L + 1) \leq(1/4)(M + 1)^2$ and $M + L \leq2M$,
we obtain
\begin{eqnarray*}
&&\lim_{h \to0} h^{-1} E
\bigl(\pi_1 (X_{t + h}) - \pi_1
(X_t) | \xi_t \mbox{ and } L_t = L \in[1, M]
\bigr) \\
&&\qquad\geq N^{-2} (2M + 1)^{d - 1} \bigl[a_{11} (1/4)
(M + 1)^2 \bigl(2M (2M + 1)^{d - 1} - 1\bigr)
\\
&&\hspace*{124pt}{}- a_{22} (1/2) M (M + 1) \bigl((2M + 1)^d - 2\bigr) \bigr]
\\
&&\qquad= N^{-2} (2M + 1)^{d - 1} (1/4) (M + 1)
\\
&&\qquad\quad{}\times\bigl(a_{11} (M + 1) \bigl(2M (2M + 1)^{d - 1} - 1\bigr)
- a_{22} (2M) \bigl((2M + 1)^d - 2\bigr)\bigr)
\\
&&\qquad= N^{-2} (2M + 1)^{d - 1} (1/4) (M + 1)^2 \bigl(2M
(2M + 1)^{d - 1} - 1\bigr) \\
&&\qquad\quad{}\times\bigl(a_{11} - c (M, d)
a_{22}\bigr) > 0
\end{eqnarray*}
almost surely whenever \eqref{eq:coex-limit} holds.

\emph{Case} 2. Assume that $L_t = L \notin[1, M]$. Then we
have the following alternative:
\begin{itemize}
\item$L = 0$, and then there are at least two vertices in the set
$R_t$.
\item$L > M$, and then the tagged player has only type~2 players in
her neighborhood and therefore changes her strategy at rate zero.
\end{itemize}
In either case, $\pi_1 (X_t)$ cannot decrease, so \eqref{eq:jump-right}
implies that
\begin{eqnarray*} &&\lim_{h \to0} h^{-1} E
\bigl(\pi_1 (X_{t + h}) - \pi_1
(X_t) | \xi_t \mbox{ and } L_t = L \notin[1,
M]\bigr)
\\
&&\qquad\geq\sum_{j = 1}^M j \lim
_{h \to0} h^{-1} P \bigl(\pi_1
(X_{t + h}) - \pi_1 (X_t) = j |
\xi_t\bigr)
\\
&&\qquad\geq N^{-2} (2M + 1)^{d - 1} (1/4) (M + 1) (- a_{22})
(2M) \bigl((2M + 1)^d - 2\bigr) > 0
\end{eqnarray*}
almost surely whenever \eqref{eq:coex-limit} holds.
This completes the proof.
\end{pf}

The previous lemma is similar to pages 1247--1248 in \cite
{neuhauser_pacala_1999}.
There, the authors conclude that we can bring a particle---the tagged
player in our case---close to the target $Ke_1$.
This is obvious in one dimension.
In higher dimensions, the idea is to use the lemma to increase the
first coordinate of the tagged player up to $K$ and then apply the lemma
again along each of the other $d - 1$ axes to bring the tagged player
close to the target.
However, since we do not have control on the position of the tagged
player in the direction orthogonal to $e_1$ while moving along the first
axis, the conclusion is not obvious.
To prove that we can bring a type~1 player close to the target in
higher dimensions, we look instead at the Euclidean distance between
the target and the
type~1 player the closest to the target.
We now call $X_t$ the position of one of the type~1 players chosen
uniformly at random among the ones who are the closest to $Ke_1$,
called again
the tagged player, and prove that
%
\begin{equation}
\label{eq:coex-trig} %
 \qquad\lim_{h \to0} \sup
_{x \notin B_2 (K e_1, K/5)} h^{-1} E (D_{t + h} - D_t |
X_t = x) \leq - \mu\qquad\mbox {for some $\mu> 0$},
\end{equation}
where $D_t:= \dist(X_t, K e_1) =$ Euclidean distance between $X_t$
and $K e_1$ and where $\Omega_K$ is a set of configurations in which
the tagged player is far from the target,
\[
\Omega_K:= \bigl\{\eta\subset\Z^d \dvtx \eta\cap
B_2 (K e_1, K/5) = \varnothing\bigr\} \qquad\mbox{for $K$
large}.
\]
Although our proof relies on basic trigonometry, the algebra is
somewhat messy, so we only prove the result in the two-dimensional nearest
neighbor case.
Hopefully, the next lemma will convince the reader that, even if the
players are located on a square lattice, the type~1 players spread
not only along each axis but also along any arbitrary direction
provided condition \eqref{eq:coex-limit} holds.

%
\begin{lemma}
\label{lem:coex-trigo}
Assume that $(M, d) = (1, 2)$, and \eqref{eq:coex-limit} holds. Then,
\eqref{eq:coex-trig} holds for all $K$ large.
\end{lemma}

\begin{pf}
The proof is based on the construction given in Figure~\ref
{fig:coex-2}. Let:
\begin{eqnarray*}
 C &:= & \mbox{the circle with center $K
e_1$ going through $X_t$},
\\
\Delta&:= & \mbox{the tangent line to the circle $C$ going through
$X_t$},
\\
\Gamma&:= & \mbox{the straight line parallel to the tangent $\Delta$ going
through $K e_1$}.
\end{eqnarray*}
The first ingredient is to observe that, on the event that $\xi_t \in
\Omega_K$,
%
\begin{eqnarray}
\label{eq:approx} %
&&\lim_{h \to0} \sup
_{x \notin B_2 (K e_1, K/5)} h^{-1} E (D_{t + h} - D_t |
X_t = x)
\nonumber
\\[-8pt]
\\[-8pt]
\nonumber
&&\qquad\approx\lim_{h \to0} \sup_{x \notin B_2 (K e_1, K/5)}
h^{-1} E \bigl(\dist(X_{t + h}, \Gamma) - \dist(X_t,
\Gamma) | X_t = x\bigr)
\end{eqnarray}
when the parameter $K$ is large, so it suffices to prove the result for
the right-hand side.
To estimate the drift, note that the straight line $\Delta$ divides
the neighborhood of $X_t$ into two sets of four vertices:
as indicated on the left-hand side of the figure, we denote by $y_i$
the four vertices the closest to the target $K e_1$ and we denote by $x_i$
the other four vertices in such a way that
\[
l_i:= \dist(y_i, \Delta) = \dist(x_i,
\Delta) \qquad\mbox{for } i = 1, 2, 3, 4.
\]

%
\begin{figure}[b]

\includegraphics{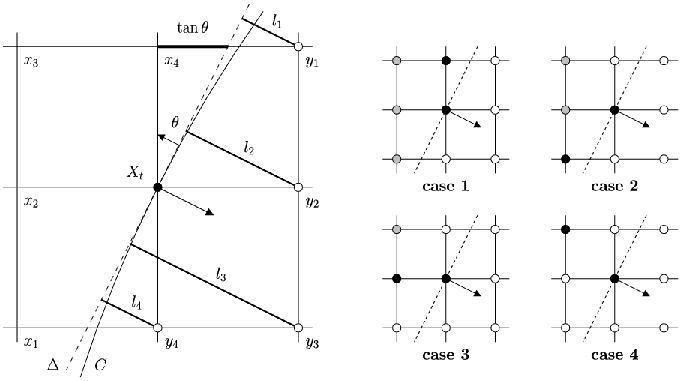}

\caption{Picture related to the proof of Lemma~\protect\ref{lem:coex-trigo}.}
\label{fig:coex-2}
\end{figure}

\noindent Defining the angle $\theta$ as in the picture, some basic trigonometry
shows that
%
\begin{eqnarray}
\label{eq:trigo} %
l_1 & = & (1 -
\tan\theta) \cos\theta,\qquad l_2  = \cos \theta,
\nonumber
\\[-8pt]
\\[-8pt]
\nonumber
l_3 & = & (1 + \tan\theta) \cos\theta,\qquad  l_4  =  \tan
\theta\cos\theta.
\end{eqnarray}
We may assume that $0 \leq\theta\leq\pi/4$ and so $\tan\theta\in
[0, 1]$ since any other configuration can be deduced from a rotation of
this configuration.
Note that all four players at sites $y_i$ must follow strategy~2,
which gives $2^4 = 16$ possible configurations in the neighborhood of $X_t$.
To find a bound for the drift, we only distinguish four types of
configurations (see Figure \ref{fig:coex-2}).
\begin{itemize}
\item\emph{Case} 1. Assume that $x_4 \in\xi_t$. Then
\begin{eqnarray*}
&&\lim_{h \to0} \sup
_{x \notin B_2 (K e_1, K/5)} h^{-1} E \bigl(\dist(X_{t + h}, \Gamma)
- \dist(X_t, \Gamma) | X_t = x\bigr)
\\
&&\qquad\leq(- a_{11}) (16/64) l_4 - (- a_{22})
\\
&&\quad\qquad{}\times\bigl((12/64) l_1 + (12/64) l_2 + (7/64) l_3
+ (7/64) l_4\bigr).
\end{eqnarray*}
Using \eqref{eq:trigo}, we deduce that
\begin{eqnarray*}
&&\lim_{h \to0} \sup
_{x \notin B_2 (K e_1, K/5)} h^{-1} E \bigl(\dist(X_{t + h}, \Gamma)
- \dist(X_t, \Gamma) | X_t = x\bigr)
\\
&&\qquad\leq(1/64) \cos\theta\bigl(- a_{11} (16 \tan\theta) + a_{22}
(31 + 2 \tan\theta)\bigr) < 0
\end{eqnarray*}
whenever $a_{11} > (33 / 16) a_{22}$ which holds if $a_{11} > c (1,
2) a_{22} = (7 / 5) a_{22}$.
\item\emph{Case} 2. Assume that $x_4 \notin\xi_t$ and $x_1 \in
\xi_t$. Then
\begin{eqnarray*} &&\lim_{h \to0} \sup
_{x \notin B_2 (K e_1, K/5)} h^{-1} E \bigl(\dist(X_{t + h}, \Gamma)
- \dist(X_t, \Gamma) | X_t = x\bigr)
\\
&&\qquad\leq(1/64) \cos\theta\bigl(- a_{11} \times15 l_1 +
a_{22} \times(7 l_1 + 7 l_2 + 7
l_3 + 12 l_4)\bigr)
\\
&&\qquad= (1/64) \cos\theta\bigl(- a_{11} (15 - 15 \tan\theta) +
a_{22} (21 + 12 \tan\theta)\bigr) < 0
\end{eqnarray*}
whenever $a_{11} > c (1, 2) a_{22} = (7 / 5) a_{22}$.
\item\emph{Case} 3. Assume that $x_4, x_1 \notin\xi_t$ and $x_2
\in\xi
_t$. Then
\begin{eqnarray*} &&\lim_{h \to0} \sup
_{x \notin B_2 (K e_1, K/5)} h^{-1} E \bigl(\dist(X_{t + h}, \Gamma)
- \dist(X_t, \Gamma) | X_t = x\bigr)
\\
&&\qquad\leq(1/64) \cos\theta\bigl(- a_{11} \times12 l_2 +
a_{22} \times(7 l_1 + 7 l_2 + 7
l_3 + 12 l_4)\bigr)
\\
&&\qquad= (1/64) \cos\theta\bigl(- 12 a_{11} + a_{22} (21 + 12 \tan
\theta)\bigr) < 0
\end{eqnarray*}
whenever $a_{11} > (7 / 4) a_{22}$ which holds if $a_{11} > c (1, 2)
a_{22} = (7 / 5) a_{22}$.
\item\emph{Case} 4. Assume that $x_4, x_1, x_2 \notin\xi_t$ and $x_3
\in\xi_t$. Then
\begin{eqnarray*} &&\lim_{h \to0} \sup
_{x \notin B_2 (K e_1, K/5)} h^{-1} E \bigl(\dist(X_{t + h}, \Gamma)
- \dist(X_t, \Gamma) | X_t = x\bigr)
\\
&&\qquad\leq(1/64) \cos\theta\bigl(- a_{11} \times7 l_2 +
a_{22} \times (7 l_1 + 7 l_2 + 7
l_3 + 7 l_4)\bigr)
\\
&&\qquad= (1/64) \cos\theta\bigl(- a_{11} (7 + 7 \tan\theta) +
a_{22} (21 + 7 \tan\theta)\bigr) < 0
\end{eqnarray*}
whenever $a_{11} > 2 a_{22}$ which holds if $a_{11} > c (1, 2) a_{22}
= (7 / 5) a_{22}$.
\end{itemize}
This, together with the approximation \eqref{eq:approx}, implies the lemma.
\end{pf}

We now use Lemmas \ref{lem:coex-drift} and \ref{lem:coex-trigo} to
prove that, with probability close to one when $K$ is large,
the tagged player is located in a certain Euclidean ball with center
$Ke_1$ at a deterministic time proportional to $K$.
This is done in Lemmas \ref{lem:coex-target-1}--\ref
{lem:coex-target-3} below where we successively prove that the tagged player
hits a subset of the target region in a short time, does not leave a
certain larger ball centered at zero, and stays in the target
region for a long time.
The second step is needed to ensure that the events under
consideration are measurable with respect to the graphical representation
in a finite space--time box, which is a key to obtaining a coupling
between the process and oriented percolation with a finite
range of dependence.
For every positive integer $K$, define
\[
\tau_K:= \inf\bigl\{t \dvtx X_t \in B_2
(K e_1, 2 K/5) \bigr\} = \inf\{ t \dvtx D_t < 2 K/5 \}.
\]
%

%
\begin{lemma}
\label{lem:coex-target-1}
Assume \eqref{eq:coex-limit}.
There exist $c, C_1 < \infty$ and $\gamma_1 > 0$ such that
\[
P \bigl(\tau_K \geq c K | X_0 \in B_2 (0,
3 K/5)\bigr) \leq C_1 \exp (- \gamma_1 K)\qquad \mbox{for all
$K$ large}.
\]
\end{lemma}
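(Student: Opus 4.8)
The plan is to convert the uniformly positive drift of the tagged player toward the target, established in Lemmas~\ref{lem:coex-drift}--\ref{lem:coex-trigo}, into an exponentially small upper tail for $\tau_K$ by means of an exponential supermartingale. I would need two inputs along the event $\{t < \tau_K\}$: a \emph{uniform negative drift} $\mathfrak D_t \leq -\delta$ for the distance $D_t = \dist(X_t, Ke_1)$, and \emph{uniformly bounded jumps} of $D_t$, in size and in rate. Note first that for $t < \tau_K$ one has $D_t \geq 2K/5 > K/5$, so every type~1 player lies outside $B_2(Ke_1, K/5)$ and hence $\xi_t \in \Omega_K$, which is exactly the regime covered by Lemma~\ref{lem:coex-trigo}.

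For the first input, I would argue that on $\{\xi_t \in \Omega_K\}$ the drift $\mathfrak D_t$ depends only on the configuration seen in a fixed finite neighborhood of $X_t$, hence takes only finitely many values up to the relevant lattice symmetries, each strictly negative by Lemmas~\ref{lem:coex-drift}--\ref{lem:coex-trigo} together with the approximation \eqref{eq:approx}; this yields $\delta > 0$ and $K_0$ with $\mathfrak D_t \leq -\delta$ whenever $\xi_t \in \Omega_K$ and $K \geq K_0$. For the second input, the vanishing of the off-diagonal payoffs in \eqref{eq:coex-limit} is what I would exploit: a type~1 player with no type~1 neighbor has payoff $a_{12} = 0$ and is frozen, so the tagged player can disappear only if it has a type~1 neighbor within distance $M$, and it can be overtaken by a nearer type~1 player only through a flip occurring within distance $M$ of $X_t$ (a type~2 site must be adjacent to a type~1 site to copy it). Thus $X_t$ reacts only to the Poisson clocks in a bounded neighborhood of its current position, so each displacement satisfies $|D_t - D_{t^-}| \leq c_0 := M\sqrt d$ and the total jump rate is bounded by a constant $\Lambda$ depending only on $M, d$ and the payoffs; creations of type~1 players nearer the target elsewhere only decrease $D_t$ further and are harmless for an upper bound on $\tau_K$.

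Next I would run the martingale estimate. Under the conditioning $X_0 \in B_2(0, 3K/5)$ one has $D_0 \leq |X_0| + K \leq 8K/5$, and by the first input $M_t := D_{t \wedge \tau_K} + \delta\,(t \wedge \tau_K)$ is a supermartingale with $M_0 = D_0$. On $\{\tau_K > cK\}$, where $D_{cK} \geq 2K/5$, this gives
$$ M_{cK} - M_0 \ \geq \ (2K/5 + \delta c K) - 8K/5 \ = \ \delta c K - \tfrac{6}{5}K \ =: \ \beta K, $$
positive as soon as $c > 6/(5\delta)$. A routine generator computation using the jump bounds then shows that, for every $\theta > 0$, the process $\exp(\theta\,(M_{t\wedge\tau_K} - M_0) - \kappa(\theta)\,(t\wedge\tau_K))$ is a supermartingale, where $\kappa(\theta) := \tfrac12 \Lambda c_0^2\, \theta^2 e^{\theta c_0} = O(\theta^2)$: the $+\delta$ slope contributes $+\delta\theta$, which cancels the first-order term $-\delta\theta$ from the jumps of $D$, leaving only the second-order fluctuation term. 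Markov's inequality at time $cK$ then yields
$$ P\,(\tau_K > cK \mid X_0 \in B_2(0, 3K/5)) \ \leq \ P\,(M_{cK} - M_0 \geq \beta K) \ \leq \ \exp\big(-K(\theta\beta - c\,\kappa(\theta))\big). $$
I would finish by fixing $\theta$ small enough that $\theta\delta - \kappa(\theta) > 0$ (possible since $\kappa(\theta) = O(\theta^2)$) and then $c$ large enough that $c(\theta\delta - \kappa(\theta)) > 6\theta/5$ and $c > 6/(5\delta)$, so that $\gamma_1 := \theta\beta - c\,\kappa(\theta) = c(\theta\delta - \kappa(\theta)) - 6\theta/5 > 0$; since $c$ is at our disposal the same bound (with a possibly smaller exponent) also covers $P(\tau_K \geq cK \mid \cdots)$, and the lemma holds with $C_1 = 1$.

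The generator computation and the bookkeeping with $\delta, c_0, \Lambda, \theta, c$ are routine. The delicate point will be the bounded-jumps claim of the second step: it rests entirely on $a_{12} = a_{21} = 0$ in \eqref{eq:coex-limit}, which keeps an isolated type~1 player from vanishing and thereby rules out an unbounded jump of the closest-to-target player, and on restricting attention to the clocks in a bounded neighborhood of $X_t$, legitimate here only because extra births nearer the target can only speed up the hitting of the target region. A secondary caveat, already flagged before Lemma~\ref{lem:coex-trigo}, is that the uniform drift bound in dimension $d \geq 3$ relies on the higher-dimensional analog of that lemma, whose proof is merely sketched.
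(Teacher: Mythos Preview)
Your proposal is correct and follows the same strategy as the paper: extract the uniform negative drift $\mathfrak D_t \leq -\mu$ on $\{D_t \geq K/5\}$ from Lemmas~\ref{lem:coex-drift}--\ref{lem:coex-trigo}, combine it with the bounded jump size and rate (which, as you correctly identify, hinges on $a_{12}=a_{21}=0$), and turn this into an exponential tail for $\tau_K$. The only difference is presentational: the paper's proof is terse, writing $E\,D_t \leq D_0 - \mu t$ and then simply invoking ``large deviation estimates for the Poisson distribution,'' whereas you spell out the exponential supermartingale explicitly --- which is in fact precisely the technique the paper itself deploys in the very next lemma (Lemma~\ref{lem:coex-target-2}).
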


\begin{pf}
According to Lemmas~\ref{lem:coex-drift} and~\ref{lem:coex-trigo},
%
\begin{equation}
\label{eq:coex-target-1} %
\qquad\lim_{h \to0} \sup
_{x \notin B_2 (K e_1, K/5)} h^{-1} E (D_{t + h} - D_t |
X_t = x) \leq- \mu\qquad\mbox {for some } \mu> 0
\end{equation}
from which it follows that
\[
E D_t \leq D_0 - \mu t \qquad\mbox{for all } t <
\mu^{-1} (D_0 - K/5).
\]
Using in addition that the number of jumps of the process $(D_t)$
dominates stochastically the number of jumps of a Poisson process with
positive intensity, large deviation estimates for the Poisson
distribution imply that
\begin{eqnarray*} &&P \bigl(\tau_K \geq2
\mu^{-1} K | X_0 \in B_2 (0, 3 K/5)\bigr)
\\
&&\qquad\leq P \bigl(D_t \geq K/5 \mbox{ for all } t < 2 \mu^{-1} K
| D_0 \leq7 K/5\bigr) \leq C_1 \exp(-
\gamma_1 K)
\end{eqnarray*}
for suitable constants $C_1 < \infty$ and $\gamma_1 > 0$ and all $K$
sufficiently large.
\end{pf}

%
\begin{lemma}
\label{lem:coex-target-2}
Assume \eqref{eq:coex-limit}.
There exist $C_2 < \infty$ and $\gamma_2 > 0$ such that
\[
P \bigl(D_t \geq2K \mbox{ for some } t \in(0, cK) | X_0
\in B_2 (0, 3 K/5)\bigr) \leq C_2 \exp(-
\gamma_2 K)
\]
for all $K$ sufficiently large, and where $c$ is as in Lemma~\ref
{lem:coex-target-1}.
\end{lemma}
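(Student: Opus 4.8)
The plan is to realise $D_t = \dist(\xi_t, Ke_1)$ as a pure‑jump process whose \emph{upward} jumps are bounded both in size and in rate, hence dominated by a non‑decreasing process driven by a Poisson clock of explicit, configuration‑independent rate. Since $D_0 \le |X_0| + K \le 3K/5 + K = 8K/5$ sits a fixed fraction of $K$ below $2K$, the event $\{D_t \ge 2K \text{ for some } t\le cK\}$ forces that clock to ring $\Omega(K)$ times in a window of length $cK$, which is exponentially unlikely by the Chernoff bound for the Poisson distribution.

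The crucial input is specific to \eqref{eq:coex-limit}, where $a_{12}=a_{21}=0$, $a_{11}<0$ and $a_{22}<0$. A type~1 player at $x$ has payoff $\phi(x,\xi)=a_{11}f_1(x,\xi)\le 0$, which vanishes — so that $x$ is frozen — unless $f_1(x,\xi)>0$, i.e.\ unless $x$ has a type~1 player within $\ell^\infty$‑distance $M$. I would draw two consequences. First, because a single update changes a single site and $D_t$ is a minimum over $\xi_t$, $D_t$ can increase only when the \emph{unique} closest type~1 player $X_{t-}$ flips to type~2; and that player then had a type~1 neighbour $z'\in\xi_{t-}$ with $\|z'-X_{t-}\|_\infty\le M$, so $D_t\le |z'-Ke_1|\le D_{t-}+M\sqrt d$, whence every upward jump of $D_t$ has size at most $R:=M\sqrt d$. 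Second, $X_{t-}$ can flip only through its own death clock — a type~2 player never has positive payoff, so no incoming birth arrow can convert a type~1 site to type~2 — at rate $|a_{11}|\,f_1(X_{t-})\,f_2(X_{t-})\le |a_{11}|/4\le \mathfrak m/4$; so the counting process of upward jumps of $D_t$ has compensator dominated by $(\mathfrak m/4)\,dt$, uniformly over configurations.

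With these facts the rest is routine. Writing $\hat D_t := D_0 + \sum_{s\le t,\,\Delta D_s>0}\Delta D_s$, this process is non‑decreasing and $D_t = \hat D_t - \sum_{s\le t,\,\Delta D_s<0}|\Delta D_s| \le \hat D_t$, so $\sup_{t\le cK}D_t \le \hat D_{cK} \le D_0 + R\,N_{cK}$ with $N$ stochastically dominated by a rate‑$(\mathfrak m/4)$ Poisson process. Given $X_0\in B_2(0,3K/5)$ this gives
$$ P\,(D_t\ge 2K \ \text{for some}\ t\in(0,cK) \mid X_0\in B_2(0,3K/5)) \ \le \ P\big(N_{cK}\ge \tfrac{2K}{5R}\big), $$
and the Chernoff bound for $\poisson(\tfrac{\mathfrak m c}{4}K)$ yields $C_2 e^{-\gamma_2 K}$ for $K$ large, the linear threshold $2K/(5R)$ exceeding the Poisson mean — after, if necessary, enlarging the confinement radius $2K$ to a larger multiple of $K$, which only widens the space‑time box of the coupling and changes no other constant in the block construction. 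One can additionally invoke \eqref{eq:coex-target-1} and Lemmas~\ref{lem:coex-drift}--\ref{lem:coex-trigo} to observe that $D_t$ in fact has drift $\le -\mu<0$ on $\{D_t\ge K/5\}$, so it tends to decrease, which only strengthens the estimate; but this is not needed for the bound.

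The step I expect to be the genuine obstacle is the bounded‑upward‑jump property, and it is exactly here that the vanishing of $a_{12}$ is essential: without the observation that a type~1 player lacking a type~1 neighbour is frozen, the reassignment of the tagged player to ``the next closest type~1 player'' after a flip could be an arbitrarily long jump toward $B_2(Ke_1,2K)^c$ — the type~1 cluster, even started from a single site, can be spread over a region of diameter of order $K$ by time $cK$ — and then neither the Poisson domination above nor a light‑cone estimate would control $\sup_{t\le cK}D_t$. Once the upward increments are capped by $R$, the argument relies only on the explicit constants $R$ and $\mathfrak m/4$ and is insensitive to the possibly small drift $\mu$ that fixes $c$ in Lemma~\ref{lem:coex-target-1}.
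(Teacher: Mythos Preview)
Your argument is correct and takes a genuinely different route from the paper's. You exploit a structural feature of the regime \eqref{eq:coex-limit}: because $a_{12}=a_{21}=0$, a type~1 player with no type~1 neighbour is frozen, so every upward jump of $D_t$ has size at most $R=M\sqrt d$ and the upward-jump clock has intensity at most $\mathfrak m/4$; a Chernoff bound for a dominating Poisson variable then finishes. The paper instead uses the negative drift \eqref{eq:coex-target-1} directly: it shows that $\phi_t(a)=a^{D_{t\wedge\tau}}$ is a supermartingale for some $a>1$, applies optional stopping at $\tau=\tau_K\wedge\sigma_K$ to obtain $P(\sigma_K<\tau_K)\le a^{-2K/5}$, and then combines this with a crude bound on the number of jumps by time $cK$.

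The trade-off is that the supermartingale argument delivers the bound for \emph{any} $c$, whereas yours needs the threshold $2K/(5R)$ to exceed the Poisson mean $c\,\mathfrak m K/4$, i.e.\ $c<8/(5R\,\mathfrak m)$, and nothing in Lemma~\ref{lem:coex-target-1} (where $c=2\mu^{-1}$) guarantees this. You anticipate the issue and propose enlarging the confinement radius from $2K$ to a larger multiple of $K$; that is valid and, as you note, only raises the dependence range in the percolation coupling of Lemma~\ref{lem:coex-perco} from one to some larger finite constant. So you prove a slightly weaker statement that is equally serviceable for the block construction, but your remark that the drift ``is not needed for the bound'' holds only after that modification; to get the lemma exactly as stated one must feed the drift back in, and then the exponential supermartingale is the natural device.
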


\begin{pf}
First, we introduce the stopping times
\[
\sigma_K:= \inf\{t \dvtx D_t \geq2K \}\quad \mbox{and}\quad
T_K:= \inf(\tau_K, \sigma_K)
\]
and the process stopped at time $T_K$
\[
Z_t:= \exp(a D_t) \ind\{t < T_K \} +
\exp(a D_{T_K}) \ind\{ t \geq T_K \}.
\]
As in Lemma~\ref{lem:1D-drift}, the key to the proof is to find a
constant $a > 0$ such that the process $(Z_t)$ is a supermartingale
with respect to the
natural filtration of the process $(\xi_t)$ and then apply the optimal
stopping theorem.
To prove the existence of such a constant, we introduce
\[
 \Phi(a):= \lim_{h \to0}
h^{-1} E \bigl(Z_{t +
h} (a) - Z_t (a) |
\xi_t\bigr)
\]
and observe that, for all $t < T_K$,
\begin{eqnarray*}
\Phi(a) &=& \sum_{x \in\Z^d}
\bigl(\exp\bigl(a \dist (X_t + x, K e_1)\bigr)
- \exp\bigl(a \dist (X_t, K e_1)\bigr)\bigr) \\
&&\hspace*{18pt}{}\times\lim
_{h \to0} h^{-1} P (X_{t + h} - X_t =
x | \xi_t).
\end{eqnarray*}
Recalling \eqref{eq:coex-target-1} and using that $D_t \geq K/5$ for
all $t < T_K$, we deduce that
\begin{eqnarray*} \Phi' (0) & = & \sum
_{x \in\Z^d} \bigl(\dist(X_t + x, K e_1) -
\dist(X_t, K e_1)\bigr) \lim_{h \to0}
h^{-1} P (X_{t + h} - X_t = x | \xi_t)
\\
& = & \lim_{h \to0} h^{-1} E (D_{t + h} -
D_t | \xi_t) \leq- \mu< 0\qquad \mbox{almost surely}.
\end{eqnarray*}
Since in addition $\Phi(0) = 0$, there exists $a_0 > 0$ fixed from now
on such that
\[
\Phi(a_0):= \lim
_{h \to0} h^{-1} E (Z_{t
+ h} - Z_t
| \xi_t) \leq0 \qquad\mbox{almost surely},
\]
which implies that $(Z_t)$ is a supermartingale for this value of $a$.
Since the stopping time $T_K$ is almost surely finite, the optimal
stopping theorem further implies that
\begin{eqnarray*}
E Z_{T_K} & \leq& E Z_0 =
E \exp(a_0 X_0) \leq\exp\bigl(a_0 (K + 3
K/5)\bigr) = \exp\bigl(a_0 (8 K/5)\bigr),
\\
E Z_{T_K} & \geq& \exp(2a_0 K) P (\sigma_K <
\tau_K) + \exp\bigl(a_0 (2 K/5 - M)\bigr) P (
\tau_K < \sigma_K)
\\
& \geq & \exp(2a_0 K) P (\sigma_K <
\tau_K) + \exp\bigl(a_0 (2 K/5 - M)\bigr) \bigl(1 - P (
\sigma_K < \tau_K)\bigr)
\end{eqnarray*}
from which we deduce that
\begin{eqnarray*} P (\sigma_K < \tau_K)
& \leq& \bigl[\exp\bigl(a_0 (8 K/5)\bigr) - \exp\bigl(a_0 (2
K/5 - M)\bigr)\bigr]
\\
&&{} \times\bigl[\exp(2a_0 K) - \exp\bigl(2a_0 (2 K/5 - M)
\bigr)\bigr]^{-1}
\\
& = & \bigl[\exp\bigl(a_0 (6 K/5 + M) - 1\bigr)\bigr] \bigl[\exp
\bigl(a_0 (8 K/5 + M) - 1\bigr)\bigr]^{-1}
\\
& \leq& \exp(- 2a_0 K/5).
\end{eqnarray*}
Since the probability that the number of jumps of the tagged player by
time $cK$ exceeds a certain multiple of $K$ also has exponential decay,
the result follows.
\end{pf}

%
\begin{lemma}
\label{lem:coex-target-3}
Assume \eqref{eq:coex-limit}.
There exist $C_3 < \infty$ and $\gamma_3 > 0$ such that
\[
P \bigl(D_t \geq3 K/5 \mbox{ for some } t \in(\tau_K,
cK) | \tau_K < cK\bigr) \leq C_3 \exp(-
\gamma_3 K)
\]
for all $K$ sufficiently large.
\end{lemma}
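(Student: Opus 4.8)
The plan is to run the same exponential supermartingale argument as in the proof of Lemma~\ref{lem:coex-target-2}, this time with the two levels $K/5$ and $3K/5$, and to supplement it with a restart/excursion decomposition that absorbs the loss of drift control near the target. By the strong Markov property applied at $\tau_K$, it suffices to produce $C < \infty$ and $\gamma > 0$ such that, uniformly over all configurations with $D_0 < 2K/5$,
$$ P \,(D_t \geq 3 K/5 \ \hbox{for some} \ t \in (0, cK)) \ \leq \ C \,\exp (- \gamma K) \quad \hbox{for $K$ large}. $$

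First, two preliminary observations. By the proof of Lemma~\ref{lem:coex-target-2}, because of \eqref{eq:coex-target-1} there is a constant $a > 1$ such that the process $a^{D_t}$ has nonpositive drift at every time at which $D_t \geq K/5$; consequently $a^{D_t}$ stopped at any stopping time $\theta$ with $D_t \geq K/5$ on $[0, \theta)$ is a supermartingale. Next, in the regime \eqref{eq:coex-limit} a type~1 player can change her strategy only if she has a type~1 neighbor, and that neighbor survives the update; hence when the tagged player is replaced she is replaced by one at Euclidean distance at most $M \sqrt d$ larger, and a symmetric reasoning for the appearance of a new closest type~1 player shows that $|D_t - D_{t-}| \leq M \sqrt d$ at every jump. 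Moreover $D_t$ can only increase when the site $X_t$ itself flips, which happens at rate at most $2 \mathfrak m$.

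Now set $s_0 := 0$ and define inductively, for $i \geq 0$,
$$ u_i \ := \ \inf \,\{t > s_i : D_t \geq 2 K/5 \}, \qquad s_{i + 1} \ := \ \inf \,\{t > u_i : D_t \geq 3 K/5 \ \hbox{or} \ D_t \leq K/5 \}, $$
with $\inf \varnothing = \infty$. Since jumps of $D_t$ are bounded by $M \sqrt d$, for $K$ large $D_t$ cannot jump from below $2 K/5$ to at least $3 K/5$, so on $\{D_t \geq 3 K/5$ for some $t \in (0, cK)\}$ the level $3 K/5$ is first reached at some $s_{i + 1}$ with $u_i < cK$. On $[u_i, s_{i + 1})$ one has $K/5 < D_t < 3 K/5$, so $a^{D_t}$ is a bounded supermartingale there; since also $D_{u_i} < 2 K/5 + M \sqrt d$, the optional stopping theorem gives
$$ a^{3 K/5} \ P \,(D_{s_{i + 1}} \geq 3 K/5 \mid \mathcal F_{u_i}) \ \leq \ E \,(a^{D_{s_{i + 1}}} \,\ind \{s_{i + 1} < \infty \} \mid \mathcal F_{u_i}) \ \leq \ a^{2 K/5 + M \sqrt d}, $$
so each excursion reaches the upper level with conditional probability at most $a^{- K/5 + M \sqrt d}$, uniformly in $i$ and in the configuration at time $u_i$.

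It remains to bound the number of excursions before time $cK$. For $i \geq 1$, in the only case relevant for the bound the time $s_i$ is attained at the lower level, so $D_{s_i} \leq K/5$ while $D_{u_i} \geq 2 K/5$; hence $D_t$ must increase by at least $K/5$ on $[s_i, u_i]$, which forces at least $K / (5 M \sqrt d)$ upward jumps of $D_t$ there. Since these occur only when $X_t$ flips, i.e.~at rate at most $2 \mathfrak m$, a Poisson large-deviation estimate, exactly as at the end of the proof of Lemma~\ref{lem:coex-target-2}, shows that their total number in $(0, cK)$ exceeds $C' K$ only with probability at most $\exp (- \gamma' K)$, for suitable $C', \gamma' > 0$; on the complementary event there are at most $5 C' M \sqrt d + 1$ excursions. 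A union bound over these boundedly many excursions then yields
$$ P \,(D_t \geq 3 K/5 \ \hbox{for some} \ t \in (0, cK)) \ \leq \ (5 C' M \sqrt d + 1) \,a^{- K/5 + M \sqrt d} \ + \ \exp (- \gamma' K), $$
which is of the desired form. The genuine obstacle is that the drift estimate \eqref{eq:coex-target-1} is available only outside $B_2 (K e_1, K/5)$ and that, a priori, the tagged player could leap far from the target when she is replaced; the first point is handled by noting that any path reaching distance $3 K/5$ must traverse the controlled annulus $K/5 \leq D_t \leq 3 K/5$ against the drift, the second by the regime assumption \eqref{eq:coex-limit} which keeps the jumps of $D_t$ bounded, and together they reduce the estimate to a union bound over boundedly many exponentially unlikely crossings.
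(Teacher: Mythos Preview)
Your proof is correct and follows essentially the same route as the paper: reduce via the strong Markov property to a start with $D_0 < 2K/5$, and then rerun the exponential supermartingale $a^{D_t}$ from Lemma~\ref{lem:coex-target-2} with the levels $K/5$ and $3K/5$ in place of $2K/5$ and $2K$. The only difference is one of explicitness: the paper simply points back to the proof of Lemma~\ref{lem:coex-target-2} (whose final sentence about the number of jumps by time $cK$ is itself a compressed restart argument), whereas you spell out the excursion decomposition across the intermediate level $2K/5$, verify the jump bound $|D_t - D_{t-}| \leq M\sqrt d$ from the regime \eqref{eq:coex-limit}, and show that the number of excursions before time $cK$ is bounded by a constant except on an event of exponentially small probability. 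Your added detail is sound and makes transparent exactly why the loss of drift control inside $B_2(Ke_1, K/5)$ is harmless.
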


\begin{pf}
The result directly follows by observing that
\begin{eqnarray*} && P \bigl(D_t \geq3 K/5 \mbox{ for some }
t \in(\tau_K, cK) | \tau _K < cK\bigr)
\\
&&\qquad\leq P \bigl(D_t \geq3 K/5 \mbox { for some } t \in(0, cK) |
D_0 < 2 K/5\bigr)
\end{eqnarray*}
and by following the argument of the proof of Lemma~\ref
{lem:coex-target-2} but using
\[
\tau'_K:= \inf\{t \dvtx D_t < K/5 \}\quad
\mbox{and}\quad \sigma '_K:= \inf\{t \dvtx D_t
\geq3 K/5 \}
\]
in place of the stopping times $\tau_K$ and $\sigma_K$.
\end{pf}

With Lemmas \ref{lem:coex-target-1}--\ref{lem:coex-target-3}, we are
now ready to couple the process properly rescaled in space and time
with oriented site percolation.
Denote by $\mathbb W_n^{1 - \ep}$ the set of wet sites at level $n$ in
a one dependent oriented site percolation process
on $\mathcal H_1$ in which sites are open with probability $1 - \ep$.
Recall that a site is said to be wet if it can be reached from level
zero by a path of open sites.

%
\begin{lemma}
\label{lem:coex-perco}
Assume \eqref{eq:coex-limit}, and let $\ep> 0$.
Then, for all $K$ sufficiently large, the process can be coupled with
oriented site percolation in such a way that
\[
\mathbb W_n^{1 - \ep} \subset\mathbb X_n \qquad\mbox{for
all } n \geq 0 \mbox{ whenever } \mathbb X_0 = \mathbb
W_0^{1 - \ep}.
\]
\end{lemma}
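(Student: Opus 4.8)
The plan is to feed Lemmas~\ref{lem:coex-target-1}--\ref{lem:coex-target-3} into the block‑construction theorem of Durrett (Theorem~4.3 in \cite{durrett_1995}). That theorem reduces the desired domination to the following ``one‑step'' statement: for $K$ large one can attach to each site $(z,n)\in H$ a good event, measurable with respect to the graphical representation in a bounded space–time box around $(z,n)$, of probability at least $1-\ep$, on which the occupancy condition \eqref{eq:coex-occupied} at $(z,n)$ forces the occupancy condition at every out‑neighbour $(z',n+1)$ with $(z,n)\to(z',n+1)$. By translation invariance of the game and of the graph $\mathcal H_1$ it suffices to treat the step from $(0,0)$ to one out‑neighbour $(z',1)$, $z'=\pm e_i$, and to take a union bound over the $2d$ of them; the lattice symmetries of the generator make all $2d$ probabilities equal.

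First I would fix the conditioning. Assuming $(0,0)$ is occupied, i.e.\ $\xi_0\cap B_2(0,3K/5)\neq\varnothing$, there is a type~1 player at Euclidean distance at most $8K/5$ from the target $Kz'$, so the hypotheses of Lemmas~\ref{lem:coex-target-1}--\ref{lem:coex-target-3} are met for the tagged player $X_t$ aimed at $Kz'$, with $D_t:=\dist\,(X_t,Kz')$ and $\tau_K:=\inf\,\{t:D_t<2K/5\}$. Define the good event for this direction by
$$ G_{z'} \ := \ \{\tau_K<cK\}\,\cap\,\{D_t<3K/5\ \hbox{for all}\ t\in(\tau_K,cK)\}\,\cap\,\{D_t<2K\ \hbox{for all}\ t\in(0,cK)\}. $$
On $G_{z'}$ the tagged player is of type~1 and satisfies $X_{cK}\in B_2(Kz',3K/5)$, hence $(z',1)$ is occupied. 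By Lemmas~\ref{lem:coex-target-1}, \ref{lem:coex-target-3} and~\ref{lem:coex-target-2} respectively, the complements of the three events have probabilities bounded by $C_j\exp(-\gamma_j K)$, so $P\,(G_{z'}^c)\le 3C\exp(-\gamma K)$; summing over the $2d$ out‑neighbours, every out‑neighbour of $(0,0)$ becomes occupied with probability at least $1-2d\,C\exp(-\gamma K)\ge 1-\ep$ once $K$ is large.

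The subtle point is measurability, and this is where the non‑monotonicity of the game enters: the flip rate at $x$ is proportional to $f_1(x,\eta)\,f_2(x,\eta)$, which is not monotone in the configuration, so one cannot pass to a worst‑case initial condition outside the relevant block. Instead one exploits that the drift bounds of Lemmas~\ref{lem:coex-drift}--\ref{lem:coex-trigo}, hence those behind Lemmas~\ref{lem:coex-target-1}--\ref{lem:coex-target-3}, are \emph{pointwise} estimates valid for every configuration $\xi_t$, so $G_{z'}$ has probability at least $1-\ep$ irrespective of the rest of the configuration at the start of the block step. The third event in $G_{z'}$ confines the tagged player to $B_2(Kz',2K)$ during $[0,cK]$; combining this with a standard finite‑speed‑of‑propagation estimate — on an additional event of probability tending to one, the evolution inside a fixed multiple of $K$ around the block during $[cnK,c(n+1)K]$ is determined by the graphical‑representation marks inside that box — one may take $G_{z'}$ to be a function of the graphical representation in a box $B_2(Kz,C_0K)\times[cnK,c(n+1)K]$ only. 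Good events of blocks that are far apart in $\mathcal H_1$ then depend on disjoint portions of the graphical representation, giving the finite range of dependence that Theorem~4.3 in \cite{durrett_1995} requires, and that theorem then yields $\mathbb W_n^{1-\ep}\subset\mathbb X_n$ for all $n$ whenever $\mathbb X_0=\mathbb W_0^{1-\ep}$ and $K$ is large. I expect this last localization step — making the spreading events measurable in a bounded box despite the lack of attractiveness — to be the main obstacle; everything else is bookkeeping built on Lemmas~\ref{lem:coex-target-1}--\ref{lem:coex-target-3}.
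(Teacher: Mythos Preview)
Your proposal is correct and follows essentially the same route as the paper: define a good event from the three estimates in Lemmas~\ref{lem:coex-target-1}--\ref{lem:coex-target-3}, verify it propagates occupancy to each out-neighbour, use Lemma~\ref{lem:coex-target-2} for spatial localization, and invoke Theorem~4.3 of \cite{durrett_1995}. The paper's proof is terser---it simply asserts that Lemma~\ref{lem:coex-target-2} makes $G(z,n)$ measurable with respect to the graphical representation in $B_2(Kz,2K)\times[cnK,c(n+1)K]$---whereas you add an explicit finite-speed-of-propagation step and flag the non-monotonicity issue; this extra care is warranted, since the tagged player is defined globally and one must really replace it by a locally defined surrogate on the confinement event, but the underlying argument is the same.
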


\begin{pf}
Let $\Omega(z, n)$ denote the event that site $(z, n) \in H$ is
occupied, that is,
\[
\xi_{cn K} \cap B_2 (Kz, 3K/5) \neq\varnothing.
\]
Lemmas~\ref{lem:coex-target-1} and~\ref{lem:coex-target-3} imply the
existence of a collection of events $G (z, n)$ measurable with respect
to the graphical representation of the process such that:
\begin{longlist}[(1)]
\item[(1)] for all $K$ sufficiently large, $P (G (z, n)) \geq1 - \ep
$, and such that
\item[(2)] we have the inclusions of events
\[
G (z, n) \cap\Omega(z, n) \subset\Omega(z \pm e_i, n + 1) \qquad\mbox{for
all } i = 1, 2, \ldots, d.
\]
\end{longlist}
In addition, Lemma~\ref{lem:coex-target-2} implies that these events
can be chosen so that
\begin{longlist}[(1)]
\item[(3)] $G (z, n)$ is measurable with respect to the graphical
representation in
\[
B_2 (Kz, 2K) \times\bigl[cnK, c (n + 1)K\bigr] = (Kz, cnK) +
B_2 (0, 2K) \times[0, cK].
\]
\end{longlist}
These are the assumptions of Theorem~4.3 in Durrett \cite
{durrett_1995}, from which the existence of the coupling between the
two processes directly follows.
\end{pf}

In the next lemma, which recalls the statement of Theorem~\ref
{th:coex}, we return to the process with general payoffs.
The proof relies on the previous lemma, the symmetry of the evolution
rules of the spatial game and a perturbation argument.

%
\begin{lemma}
\label{lem:coex}
For all $a_{12}$ and $a_{21}$ there exists $m > 0$ such that
coexistence occurs when
\[
c (M, d) a_{22} < a_{11} < - m \quad\mbox{and}\quad c (M, d)
a_{11} < a_{22} < - m.
\]
\end{lemma}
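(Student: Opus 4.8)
The plan is to derive coexistence from the fact that each of the two strategies survives, and to get those two survival statements from Lemma~\ref{lem:coex-perco}, the symmetry of the evolution rules, and a perturbation argument. When $a_{12}=a_{21}=0$, Lemma~\ref{lem:coex-perco} already shows that the set of type~1 players dominates a supercritical one-dependent oriented site percolation provided $c(M,d)\,a_{22}<a_{11}<-1$. Exchanging the two strategies amounts to exchanging $a_{11}\leftrightarrow a_{22}$ and $a_{12}\leftrightarrow a_{21}$ while leaving the dynamics invariant, so the same lemma applied to the mirror model shows that the set of type~2 players dominates supercritical oriented percolation provided $c(M,d)\,a_{11}<a_{22}<-1$. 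Hence, for $a_{12}=a_{21}=0$ and $(a_{11},a_{22})$ in the open cone $\{c(M,d)\,a_{22}<a_{11}<-1\}\cap\{c(M,d)\,a_{11}<a_{22}<-1\}$, both species percolate simultaneously, which is exactly coexistence in this special case.

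I would then fix $a_{12}$ and $a_{21}$ and treat the general payoff matrix as a perturbation of the diagonal one. Every step leading to Lemma~\ref{lem:coex-perco} --- the drift estimates of Lemmas~\ref{lem:coex-drift} and~\ref{lem:coex-trigo}, then the hitting, confinement and sojourn estimates of Lemmas~\ref{lem:coex-target-1}--\ref{lem:coex-target-3} --- uses the payoffs only through the flip rates of the tagged player and of its neighbors, and each of the relevant inequalities is strict, with slack proportional to $a_{11}-c(M,d)\,a_{22}$ (respectively $a_{22}-c(M,d)\,a_{11}$). Turning on the off-diagonal entries changes any player's payoff by at most $|a_{12}|+|a_{21}|$, hence changes each of these flip rates by at most a combinatorial multiple of $|a_{12}|+|a_{21}|$ and the drift of the tagged player by at most $M$ times that. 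Thus, once $a_{11},a_{22}<-m$ for a threshold $m=m(a_{12},a_{21})$ large enough, the perturbation is swamped by the slack and the whole chain of lemmas goes through for the general process: type~1 dominates supercritical oriented percolation when $c(M,d)\,a_{22}<a_{11}<-m$ (together with the companion control on $a_{22}$), and, by symmetry, type~2 dominates supercritical oriented percolation when $c(M,d)\,a_{11}<a_{22}<-m$, so in the stated region both dominations hold at once.

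Finally, starting from a Bernoulli product measure with positive density of each strategy, the event that a fixed large block contains a type~1 player --- respectively a type~2 player --- has probability tending to one with the block size, so both rescaled processes dominate supercritical percolation from their initial levels and each strategy survives with probability one; the standard block-construction bookkeeping of Durrett~\cite{durrett_1995} then upgrades this to $\liminf_{t\to\infty}P(\eta_t(x)\neq\eta_t(y))>0$ for all $x,y$, that is, coexistence. The main obstacle is the perturbation step: the slack in the drift bounds is proportional to $a_{11}-c(M,d)\,a_{22}$, which degenerates along the boundary rays of the cone, so absorbing a fixed perturbation $|a_{12}|+|a_{21}|$ \emph{uniformly} over the whole region $\{c(M,d)\,a_{22}<a_{11}<-m\}\cap\{c(M,d)\,a_{11}<a_{22}<-m\}$ --- and not merely on an interior subcone --- requires care, presumably by first rescaling the payoff matrix to a bounded family and then running a compactness/continuity argument through the proof of Lemma~\ref{lem:coex-perco}. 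Promoting co-survival to a genuinely persistent interface is the other, more routine, point to be handled.
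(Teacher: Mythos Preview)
Your overall architecture --- Lemma~\ref{lem:coex-perco}, symmetry, then perturbation --- is the right one, and is essentially what the paper does. But the perturbation step, as you wrote it, has the gap you yourself flag, and the paper resolves it by a different and much simpler device than redoing the drift estimates.

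The issue is exactly what you say: the slack in Lemmas~\ref{lem:coex-drift}--\ref{lem:coex-trigo} is proportional to $a_{11}-c(M,d)\,a_{22}$, which vanishes along the boundary rays of the cone, so a \emph{fixed} additive perturbation of size $|a_{12}|+|a_{21}|$ cannot be absorbed uniformly just by pushing $a_{11},a_{22}$ to $-\infty$ along a boundary ray. Your proposed fix (``rescale to a bounded family and run compactness'') is pointing in the right direction but is not carried out. The paper's argument is cleaner and avoids reopening any of the drift lemmas: first fix $K$ so that the block construction works at $a_{12}=a_{21}=0$ with margin $\epsilon$; then, with $K$ fixed, continuity of the flip rates in the payoffs gives a small $\rho=\rho(K)>0$ such that the same block events still have probability $\ge 1-2\epsilon$ whenever $|a_{12}|,|a_{21}|<\rho$. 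The cone conditions $c(M,d)\,a_{22}<a_{11}$ and $c(M,d)\,a_{11}<a_{22}$ are homogeneous of degree one, and multiplying all four payoffs by a constant $m>0$ only rescales time, hence leaves the long-term behavior unchanged. So for arbitrary $a_{12},a_{21}$ pick $m$ with $|a_{12}|,|a_{21}|<m\rho$; dividing the whole matrix by $m$ lands you back in the already-handled regime $|a_{12}/m|,|a_{21}/m|<\rho$, $a_{11}/m,a_{22}/m<-1$, with the cone inequalities intact. This gives the full region $\{c(M,d)\,a_{22}<a_{11}<-m\}\cap\{c(M,d)\,a_{11}<a_{22}<-m\}$ with no loss near the boundary.

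A secondary difference: rather than running two separate percolation comparisons (one per strategy) and then arguing that joint survival implies a persistent interface, the paper declares a single rescaled site $(z,n)$ to be \emph{good} when $B_2(Kz,3K/5)$ contains both a type~1 and a type~2 player, and couples the set of good sites directly with supercritical oriented percolation. This packages both survivals into one event and makes the passage to coexistence immediate; your two-percolation route would work but needs an extra argument to produce the $\liminf$ condition.
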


\begin{pf}
First, we fix $\ep< (1/2)(1 - p_c)$ positive where $p_c < 1$ is the
critical value of the oriented site percolation process introduced above.
To prove that both strategies can survive simultaneously, we extend
our previous definition of occupied site by calling $(z, n) \in H$ a
\emph{good} site whenever
\[
x \in\eta_{cn K} \quad\mbox{and}\quad y \notin\eta_{cn K} \qquad\mbox{for some }
x, y \in B_2 (K z, 3 K/5).
\]
Denote by $\mathbb Y_n$ the set of good sites at level $n$.
The symmetry of the evolution rules implies that the conclusion of
Lemma~\ref{lem:coex-perco} holds for $\mathbb Y_n$ provided that
%
\begin{eqnarray}
\label{eq:coex-region-1} a_{12} &=& a_{21} = 0 \quad\mbox{and}\quad c (M, d)
a_{22} < a_{11} < - 1\quad \mbox{and}
\nonumber
\\[-8pt]
\\[-8pt]
\nonumber
 c (M, d) a_{11}& <&
a_{22} < - 1.
\end{eqnarray}
Even though (weak) survival of both strategies when $a_{12} = a_{21} =
0$ is in fact trivial since in this case a player isolated from players
of her own type
cannot change her strategy, we point out that the coupling with
oriented site percolation is needed to obtain the full coexistence region.
Indeed, the parameter $K$ being fixed such that the process dominates
one dependent oriented site percolation with parameter $1 - \ep$, the
continuity of the transition rates
with respect to the payoffs implies the existence of a small $\rho=
\rho(K) > 0$ and a coupling of the processes such that
%
\begin{equation}
\label{eq:coex-coupling} \mathbb W_n^{1 - 2 \ep} \subset\mathbb
Y_n\qquad \mbox{for all } n \geq0 \mbox{ whenever } \mathbb Y_0 =
\mathbb W_0^{1 - 2 \ep}
\end{equation}
in a perturbation of the parameter region \eqref{eq:coex-region-1}
given by
%
\begin{eqnarray}
\label{eq:coex-region-2} - \rho&<& a_{12}, a_{21} < \rho\quad\mbox{and}\quad c (M,
d) a_{22} < a_{11} < - 1 \quad\mbox{and}
\nonumber
\\[-8pt]
\\[-8pt]
\nonumber
 c (M, d) a_{11}
&<& a_{22} < - 1.
\end{eqnarray}
In particular, letting $f \dvtx 2^H = \mbox{power set of } H \to\{0, 1 \}
$ be defined by
%
\begin{equation}
\label{eq:coupling} f \bigl(\{\mathbb W_n \dvtx n \geq0 \}\bigr):= \ind
\bigl\{\card(n \dvtx z \in \mathbb W_n) = \infty\mbox{ for all } z \in
\Z^d \bigr\}
\end{equation}
and using \eqref{eq:coex-coupling} and the monotonicity of $f$, we
obtain that for all $(x, t) \in\Z^d \times\R_+$,
\begin{eqnarray*}&& P (x \in\eta_{s_1} \mbox{ and } x \notin
\eta_{s_2} \mbox{ for some } s_1, s_2 > t)
\\
&&\qquad\geq P \bigl(\card(n \dvtx z \in\mathbb Y_n) = \infty\mbox{ for all }
z \in \Z^d\bigr) = E f \bigl(\{\mathbb Y_n \dvtx n \geq0
\}\bigr)
\\
&&\qquad\geq E f \bigl(\bigl\{\mathbb W_n^{1 - 2 \ep} \dvtx n \geq0 \bigr
\}\bigr) = P \bigl(\card\bigl(n \dvtx z \in\mathbb W_n^{1 - 2 \ep}
\bigr) = \infty\mbox{ for all } z \in\Z ^d\bigr) = 1
\end{eqnarray*}
since infinitely many sites are wet at level zero and $1 - 2 \ep> p_c$.
Note also that the first inequality follows from the fact that if a
site is good, then the corresponding space--time region obtained through
rescaling contains
both strategies, so the probability that any given vertex $x$ in this
region changes its strategy after one time unit is bounded from below
by a positive constant.
This proves coexistence of both strategies in the parameter region
\eqref{eq:coex-region-2}.
To deal with the general case when both payoffs $a_{12}$~and~$a_{21}$
are arbitrary, we fix a sufficiently large $m > 0$ such that
\[
a_{12} \in(- m \rho, m \rho) \quad\mbox{and}\quad a_{21} \in(- m \rho,
m \rho).
\]
Since the long-term behavior remains unchanged by speeding up time by
$m$, that is, multiplying all the payoffs by the same factor $m$, we
obtain coexistence in the parameter region
\[
c (M, d) a_{22} < a_{11} < - m \quad\mbox{and}\quad c (M, d)
a_{11} < a_{22} < - m.
\]
This proves the lemma and Theorem~\ref{th:coex}.
\end{pf}



\section{Proof of Theorem~\texorpdfstring{\protect\ref{th:richardson}}{2}}
\label{sec:richardson}

This section is devoted to the proof of Theorem~\ref{th:richardson}.
We first prove that, under the assumption of the theorem, strategy~1 survives.
The key to obtaining this partial result is to observe that, when the
first payoff $a_{11} = 1$ while the other payoffs are equal to zero,
the spatial game starting from suitable initial configurations
dominates stochastically a Richardson model \cite{richardson_1973}.
To also prove extinction of strategy~2, we use an idea of Lanchier
\cite{lanchier_2012} that extends from discrete-time to continuous-time
processes a result of Durrett \cite{durrett_1992}
which states that sites which are not wet do not percolate for
oriented site percolation models in which sites are open with
probability close to one.
Throughout this section, to shorten a little bit the expression of
certain events, we let
\[
B_r:= [-r, r]^d \qquad\mbox{for all } r > 0.
\]
The spatial boxes involved in the block construction in both this
section and the next section are translations of these boxes for an
appropriate radius $r$.

%
\begin{lemma}
\label{lem:richardson-block}
Let $\ep> 0$ and $a_{11} = 1$.
Then there exist $K, c, \rho> 0$ such that
\[
P \bigl(\eta_t \not\supset B_{2K} \mbox{ for some } t
\in(cK, 2 cK) | \eta_0 \supset B_K\bigr) \leq\ep
\qquad\mbox{for all } \bar a_{11} \in(- \rho, \rho)^3.
\]
\end{lemma}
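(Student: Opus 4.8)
The plan is to reduce to the unperturbed case $\bar a_{11}=0$ and $a_{11}=1$, where the spatial game degenerates into a genuine monotone growth model that can be compared from below with a Richardson model, and then to recover the full cube $(-\rho,\rho)^3$ by a routine continuity argument for the graphical representation on a bounded space-time box.

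First I would analyze the process with $a_{12}=a_{21}=a_{22}=0$ and $a_{11}=1$. Then a type~2 player at $x$ has payoff $a_{21}f_1(x,\eta)+a_{22}f_2(x,\eta)=0$, so she neither gives birth nor dies, while a type~1 player at $x$ has payoff $\phi(x,\eta)=f_1(x,\eta)\geq 0$, which can only act as a birth rate. Hence the set $\eta_t$ of type~1 players is nondecreasing in $t$, and a type~2 site $x$ turns into a type~1 site at rate $N^{-1}\sum_{y\in N_x,\,\eta(y)=1}f_1(y,\eta)$; in particular any type~1 site $y\in N_x$ that itself has a type~1 neighbor contributes at least $N^{-2}$ to that rate. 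I would use this to dominate $\eta_t$ from below by the Richardson model \cite{richardson_1973} $A_t$ on $\Z^d$ in which a vacant site becomes occupied at rate $N^{-2}$ times the number of its occupied nearest neighbors, started from $A_0=B_K$. The point is that every site of $A_t$ is either in $B_K$ or was occupied through a nearest neighbor already in $A_t$, so each site of $A_t$ always has a nearest neighbor in $A_t$; since nearest neighbors lie in the interaction neighborhood, each site of $A_t\subset\eta_t$ is then a type~1 site with a type~1 neighbor, hence contributes at least $N^{-2}$ to the conversion rate of any of its type~2 nearest neighbors. A standard coupling of these two attractive growth models, built from a common graphical representation, therefore yields $\eta_t\supset A_t$ for all $t$ whenever $\eta_0\supset B_K$. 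By the shape theorem for the Richardson model there is $c=c(M,d)<\infty$ such that $B_{2K}\subset A_{cK}$, hence $B_{2K}\subset\eta_{cK}$, with probability at least $1-\ep/2$ for all $K$ large; and because $A_t$ is nondecreasing, on this event $B_{2K}\subset A_t\subset\eta_t$ for every $t\geq cK$, in particular for all $t\in(cK,2cK)$. This proves the lemma for $\bar a_{11}=0$ with $\ep$ replaced by $\ep/2$.

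To pass to $\bar a_{11}\in(-\rho,\rho)^3$ I would construct the process from the graphical representation of Section~\ref{sec:coex}. Taking $\rho<1$, one has $\mathfrak m=\max_{i,j}|a_{ij}|=1$ on the whole parameter range at hand, so the underlying Poisson processes do not depend on $\bar a_{11}$ and the only dependence enters through the activation conditions $U_n(x)<|\phi(x,\eta_{t-})|$, whose right-hand sides depend linearly on $\bar a_{11}$. By the finite speed of propagation inherent in Harris's construction \cite{harris_1972}, the event $\{\eta_t\not\supset B_{2K}\ \hbox{for some}\ t\in(cK,2cK)\}$ is measurable with respect to the restriction of the graphical representation to a bounded, albeit random, space-time box on which there are almost surely only finitely many Poisson arrivals. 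For almost every realization all the inequalities that determine the trajectory on this box are strict when $\bar a_{11}=0$, so the trajectory, and with it the indicator of the above event, is locally constant in $\bar a_{11}$ near the origin. Bounded convergence then shows that the probability appearing in the lemma converges, as $\bar a_{11}\to 0$, to its value at $\bar a_{11}=0$, which is at most $\ep/2$; choosing $\rho>0$ small enough makes it at most $\ep$ on the cube $(-\rho,\rho)^3$, as desired.

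The step I expect to be the main obstacle is the domination by the Richardson model: the birth-rate lower bound $N^{-2}$ is available only at type~1 sites that are not isolated, so one must keep track of the fact that every occupied site of the Richardson model has an occupied nearest neighbor and transfer this property to the spatial game through the coupling. Once that is set up, the Richardson growth estimate is classical and the perturbation step is standard bookkeeping with the graphical representation.
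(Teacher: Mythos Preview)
Your proposal is correct and follows essentially the same approach as the paper: reduce to $\bar a_{11}=0$, couple the spatial game from below with a Richardson model using that every type~1 player with a type~1 neighbor has payoff at least $1/N$, invoke the shape theorem, then extend to small $\bar a_{11}$ by continuity of the graphical construction. The only cosmetic differences are that the paper uses the range-$M$ Richardson model with rate $\beta^2 f_1(x,\zeta)$ (where $\beta=1/N$) rather than your nearest-neighbor version, and that the paper states the perturbation step as a direct coupling estimate $P(\eta_t\cap B_{2K}\neq\xi_t\cap B_{2K}\ \hbox{for some}\ t\le 2cK)\le\ep/2$ rather than your bounded-convergence argument on the graphical representation.
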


\begin{pf}
We introduce the following auxiliary processes:
\begin{itemize}
\item the spatial game $\xi_t$ with payoffs $a_{11} = 1$ and $\bar
a_{11} = 0$ and
\item the Richardson model $\zeta_t$ with flip rate
\[
c_{\mathrm{RM}} (x, \zeta) = \beta^2 f_1 (x, \zeta)
\ind\{x \notin \zeta\}\qquad \mbox{where } \beta:= \bigl((2M + 1)^d - 1
\bigr)^{-1}.
\]
\end{itemize}
In the process $\xi_t$ all type~2 players have a zero payoff while each
type~1 player with at least one type~1 neighbor has a payoff equal to
$\beta$ from which it follows that
\begin{eqnarray*} c_{\mathrm{SG}} (x, \xi) & = & 0\qquad \mbox{if }
 x \in\xi,
\\
c_{\mathrm{SG}} (x, \xi) & \geq& \beta^2 \qquad \mbox{if }  x \notin\xi
\mbox { and } f_1 (y, \xi) \neq0 \mbox{ for some } y \in\xi\cap
N_x.
\end{eqnarray*}
Since in addition the property that each type~1 player has at least one
type~1 neighbor is preserved by the dynamics of the processes, we
deduce the existence of a coupling $(\zeta, \xi)$ such that
%
\begin{equation}
\label{eq:richardson-block-1} P (\zeta_t \subset\xi_t |
\zeta_0 = \xi_0 = B_K) = 1.
\end{equation}
In other respects, the shape theorem \cite{richardson_1973} for the
Richardson model implies the existence of a positive constant $c > 0$
fixed from now on such that
\begin{eqnarray*}
&&P \bigl(\zeta_t \not\supset
B_{2K} \mbox{ for some } t \in(cK, 2 cK) | \zeta_0 \supset
B_K\bigr)\\
&&\qquad = P (\zeta _{cK} \not\supset
B_{2K} | \zeta_0 \supset B_K)
\\
&&\qquad\leq P \bigl(\zeta_{cK} \not\supset B_{2K} |
\zeta_0 = \{0 \}\bigr) \leq \ep/ 2\qquad \mbox{for all $K$ large},
\end{eqnarray*}
where the equality between the first two lines holds because infected
sites in the Richardson model do not recover.
In view of \eqref{eq:richardson-block-1}, the same holds for the
spatial game, that is,
%
\begin{equation}
\label{eq:richardson-block-2} P \bigl(\xi_t \not\supset B_{2K}
\mbox{ for some } t \in(cK, 2 cK) | \xi_0 \supset B_K
\bigr) \leq\ep/ 2
\end{equation}
for all $K$ large.
Now, we fix $K$ such that \eqref{eq:richardson-block-2} holds.
The scale parameter $K$ and the constant $c$ being fixed, the
continuity of the transition rates of the spatial game with respect to
the payoffs implies the existence of a small
constant $\rho> 0$ and a coupling $(\eta, \xi)$ such that
%
\begin{equation}
\label{eq:richardson-block-3} P (\eta_t \cap B_{2K} \neq
\xi_t \cap B_{2K} \mbox{ for some } t \leq2 cK |
\eta_0 = \xi_0) \leq\ep/ 2
\end{equation}
whenever $\bar a_{11} \in(- \rho, \rho)^3$.
Combining \eqref{eq:richardson-block-2} and \eqref
{eq:richardson-block-3} gives
\begin{eqnarray*} &&P \bigl(\eta_t \not\supset
B_{2K} \mbox{ for some } t \in(cK, 2 cK) | \eta_0 \supset
B_K\bigr)
\\
&&\qquad\leq P \bigl(\xi_t \not\supset B_{2K} \mbox{ for some } t
\in(cK, 2 cK) | \xi_0 \supset B_K\bigr)
\\
&&\qquad\quad{}+ P (\eta_t \cap B_{2K} \neq\xi_t \cap
B_{2K} \mbox{ for some } t \leq2 cK | \eta_0 =
\xi_0)\\
&&\qquad \leq\ep
\end{eqnarray*}
for all $a_{11} \in(- \rho, \rho)^3$.
This completes the proof.
\end{pf}

To deduce survival of strategy~1 from Lemma~\ref{lem:richardson-block}
and under the assumptions of the lemma, we now declare a site $(z, n)
\in H$ to be \emph{occupied} whenever
\[
x \in\eta_t \qquad\mbox{for all } (x, t) \in(Kz, cnK) + B_K
\times(0, cK)
\]
and define the set $\mathbb X_n$ of occupied sites at level $n$ as in
\eqref{eq:level}.
Repeating the proof of Lemma~\ref{lem:coex-perco} but using Lemma~\ref
{lem:richardson-block} in place of Lemmas~\ref
{lem:coex-target-1}--\ref
{lem:coex-target-3} directly gives the following result.

%
\begin{lemma}
\label{lem:richardson-perco}
Let $\ep> 0$ and $a_{11} = 1$.
Then there exist $K, c, \rho> 0$ and a coupling of the spatial game
with one dependent oriented site percolation such that
\[
\mathbb W_n^{1 - \ep} \subset\mathbb X_n\qquad \mbox{for
all } n \mbox{ whenever } \mathbb X_0 = \mathbb W_0^{1 - \ep}
\mbox{ and } \bar a_{11} \in(- \rho, \rho)^3.
\]
\end{lemma}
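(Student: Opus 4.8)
The plan is to verify, for the rescaled process with the new notion of occupied site, exactly the same three hypotheses of Theorem~4.3 in Durrett \cite{durrett_1995} that were checked in the proof of Lemma~\ref{lem:coex-perco}. The only structural change is that the single statement of Lemma~\ref{lem:richardson-block} now absorbs the combined content of Lemmas~\ref{lem:coex-target-1}--\ref{lem:coex-target-3}: it guarantees in one stroke that a box full of type~1 players spreads to fill the doubled box quickly \emph{and} that the doubled box remains full throughout the relevant time window. First I would fix $\ep > 0$, let $K, c, \rho > 0$ be the constants produced by Lemma~\ref{lem:richardson-block} applied with this $\ep$, recall that $(z,n) \in H$ is occupied when $x \in \eta_t$ for all $(x,t) \in (Kz, cnK) + B_K \times (0, cK)$, and write $\Omega(z,n)$ for that event and $\mathbb X_n$ for the set of occupied sites at level $n$.

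Next I would introduce, for each $(z,n) \in H$, the good event $G(z,n)$ defined as the translate by $(Kz, cnK)$ of the event that, regardless of the strategies of the players located far from the relevant box, starting from any configuration whose type~1 players contain $B_K$ at time $cnK$, the type~1 players contain $B_{2K}$ at every time $t \in (c(n+1)K, c(n+2)K)$. Three properties must then be established. First, $P(G(z,n)) \geq 1 - \ep$ for all $K$ large and all $\bar a_{11} \in (-\rho,\rho)^3$: this is a space-time translate of Lemma~\ref{lem:richardson-block}, using translation invariance of the graphical representation together with the observation that, on the event of that lemma, the dominated Richardson model $\zeta_t$ fills $B_{2K}$ and never recovers, so the growth is a function of the arrows alone and the conditional statement of the lemma becomes an unconditional statement about the graphical representation. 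Second, the inclusions $G(z,n) \cap \Omega(z,n) \subset \Omega(z \pm e_i, n+1)$ for $i = 1, \ldots, d$: these hold because $Kz \pm Ke_i + B_K \subset Kz + B_{2K}$, so that filling $Kz + B_{2K}$ with type~1 players throughout $(c(n+1)K, c(n+2)K)$ fills each sub-box $Kz \pm Ke_i + B_K$ over that same interval. Third, $G(z,n)$ must be measurable with respect to the graphical representation in a finite space-time box of the form $(Kz, cnK) + B_{RK} \times [0, 2cK]$ for a fixed radius $R$.

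The hard part will be the third property, which is the analog of Lemma~\ref{lem:coex-target-2}: a priori, whether $B_{2K}$ is full of type~1 players depends on the configuration just outside $B_{2K}$ through the interaction neighborhoods, and also on the perturbation $\bar a_{11}$. To handle it I would run the dominating Richardson model confined to $B_{RK}$ with $R$ a fixed constant, so that it never infects a site outside $B_{RK}$ and is therefore measurable with respect to the arrows inside that box, and then invoke a large deviation estimate — the analog of the one used in Lemma~\ref{lem:coex-target-2}, available because births occur at bounded rate into neighborhoods of bounded range $M$ — to show that, up to an event of probability at most $\ep/4$, the confined and unconfined Richardson models agree before time $2cK$; together with the shape theorem this keeps $P(G(z,n)) \geq 1 - \ep$. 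The perturbation coupling \eqref{eq:richardson-block-3} is already local, involving only the graphical representation inside $B_{2K}$, so it contributes nothing new to the range of dependence. With these three properties in hand, Theorem~4.3 in Durrett \cite{durrett_1995} provides a coupling of the spatial game with one dependent oriented site percolation such that $\mathbb W_n^{1-\ep} \subset \mathbb X_n$ for all $n$ whenever $\mathbb X_0 = \mathbb W_0^{1-\ep}$ and $\bar a_{11} \in (-\rho,\rho)^3$, which is precisely the assertion of the lemma.
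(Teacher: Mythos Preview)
Your proposal is correct and follows exactly the approach the paper intends: the paper's own proof is a single sentence saying to repeat the argument of Lemma~\ref{lem:coex-perco} with Lemma~\ref{lem:richardson-block} replacing Lemmas~\ref{lem:coex-target-1}--\ref{lem:coex-target-3}, and you have unpacked precisely that, correctly identifying the three hypotheses of Theorem~4.3 in \cite{durrett_1995} and how Lemma~\ref{lem:richardson-block} supplies them. Your only imprecision is the remark that the perturbation coupling \eqref{eq:richardson-block-3} is local to $B_{2K}$ --- it is local to the larger box $B_{RK}$ via the same finite-speed-of-propagation bound you already invoke for the Richardson model --- but this does not affect the argument.
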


Taking $\ep> 0$ strictly smaller than one minus the critical value of
one dependent oriented site percolation, and using the coupling given
in the previous lemma for this value of $\ep$ as well
as the monotone function $f$ defined in \eqref{eq:coupling}, we obtain
that, for all $(x, t) \in\Z^d \times\R_+$
\begin{eqnarray*}&& P (x \in\eta_s \mbox{ for some } s > t)
\\
&&\qquad\geq P \bigl(\card(n \dvtx z \in\mathbb X_n) = \infty\mbox{ for all }
z \in \Z^d\bigr) = E f \bigl(\{\mathbb X_n \dvtx n \geq0
\}\bigr)
\\
&&\qquad\geq E f \bigl(\bigl\{\mathbb W_n^{1 - \ep} \dvtx n \geq0 \bigr
\}\bigr) = P \bigl(\card\bigl(n \dvtx z \in\mathbb W_n^{1 - \ep}
\bigr) = \infty\mbox{ for all } z \in\Z^d\bigr) = 1.
\end{eqnarray*}
This proves that strategy~1 survives but not that it wins since there
is a positive density of closed sites, which does not exclude the
possibility of having a positive density
of sites which are not occupied, and so the presence of type~2
players, at arbitrarily large times.
To prove extinction of the type~2 players, we use the coupling above
together with an idea of Lanchier \cite{lanchier_2012} that extends a
result of Durrett \cite{durrett_1992}.
This is done in the next lemma.

%
\begin{lemma}
\label{lem:richardson-dry}
Let $a_{11} = 1$.
Then, there exists $\rho> 0$ small such that
\[
\lim_{t \to\infty} P (x \notin
\eta_t) = 0 \qquad\mbox{for all } x \in\Z^d \mbox{ whenever } \bar
a_{11} \in(- \rho, \rho)^3.
\]
\end{lemma}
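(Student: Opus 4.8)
The plan is to combine the supercritical oriented percolation coupling of Lemma~\ref{lem:richardson-perco} with the result of Durrett~\cite{durrett_1992}, in the continuous-time form of~\cite{lanchier_2012}, that for oriented site percolation with parameter close to one the sites which are not wet do not percolate. First I would fix $\ep>0$ small, take $K$, $c$ and $\rho$ as in Lemma~\ref{lem:richardson-perco}, and work with $\bar a_{11}\in(-\rho,\rho)^3$, so that the entirely-type-1 blocks $\mathbb X_n$ dominate one dependent oriented site percolation of parameter $1-\ep$. The coupling supplies, for each block $(z,n)\in H$, a good event $G(z,n)$ that is measurable with respect to the graphical representation in a finite space-time window, has probability at least $1-\ep$, and satisfies the propagation property $G(z,n)\cap\Omega(z,n)\subset\Omega(z\pm e_i,n+1)$, where $\Omega(z,n)$ is the event that the block $Kz+B_K$ is occupied, i.e.\ entirely of type~1 over the time interval $(cnK,c(n+1)K)$. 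Since the only way to have $x\notin\eta_t$ is for the block containing $x$ at the corresponding level to fail to be occupied, and since $\Omega(z,n)^c$ is contained in the event that $(z,n)$ is not wet, the non-occupied blocks form a subset of the not-wet sites of the percolation; as the field of $G$-failures has density at most $\ep$ and is therefore subcritical for $\ep$ small, this not-wet region does not percolate.

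Having established survival of strategy~1 earlier in the section, it remains to upgrade ``the non-occupied blocks do not percolate'' to ``$P(x\notin\eta_t)\to 0$''. The mechanism is that a non-occupied block adjacent to an occupied one is reabsorbed by the propagation property, so a non-occupied block can persist only while it is shielded by a $G$-failure, and $G$-failures are rare and transient; tracing a non-occupied block at level $n$ backward in time, one therefore finds either a chain of $G$-failures, whose probability decays exponentially in its length by subcriticality of the $G$-failure field, or a non-occupied block at the initial level connected to it by a not-wet path of length of order $n$, whose probability decays exponentially in $n$ by the Durrett--Lanchier estimate \emph{provided} the percolation is started from a configuration that is wet with high density at its initial level. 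The point is that this last decay is in $n$ for fixed $\ep$, which is exactly what produces the limit $0$ rather than a fixed small bound. Concretely, I would not run the comparison from time $0$ but from a large deterministic time $T$: by survival of strategy~1 and the Richardson comparison of Lemma~\ref{lem:richardson-block}, the occupied region generated by the type-1 seeds present at time $0$ spreads linearly, so the density of non-occupied blocks at the initial level of the restarted percolation can be made below $\ep$; from that level the not-wet region is genuinely subcritical, one gets $P(x\notin\eta_t)\le C\exp(-\gamma n)$ for $t$ of order $cnK$ and $T$ large, the intermediate times $t\in(cnK,c(n+1)K)$ are absorbed by applying the same estimate at consecutive levels, and translation invariance gives the conclusion for every $x$. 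Combined with the scaling argument that multiplies all payoffs by a common factor, this also yields Theorem~\ref{th:richardson}.

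The main obstacle is the restart in the previous paragraph, which is not a formality: the Durrett--Lanchier estimate is stated for percolation started from the full configuration, whereas $\eta_0$ is a Bernoulli product measure in which most blocks are not entirely of type~1, and the crude block comparison alone only bounds $\limsup_t P(x\notin\eta_t)$ by a positive quantity of order $\ep$. To close the argument one must verify, first, that the good events attached to the restarted comparison remain measurable with respect to the graphical representation in finite space-time windows so that Theorem~4.3 in Durrett~\cite{durrett_1995} still applies, and second, that the density of non-occupied blocks at the restart level really can be driven below $\ep$ uniformly, which is where linear spreading of the type-1 region is invoked. This apparent circularity---using spreading to control the density of defects, which is then used to prove that the defects disappear---is precisely what the extension of Durrett's argument due to the author is designed to break, and making it rigorous is the heart of the proof.
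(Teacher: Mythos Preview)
Your proposal circles the right idea but misses the device that actually makes the argument go through, and this leads you into the restart detour that the paper avoids entirely.

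The gap is in how you pass from ``$x\notin\eta_t$'' to a dry path reaching back to level~$0$. You argue on the block percolation: a non-occupied block $(z,n)$ forces each $\mathcal H_1$-predecessor to be either non-occupied or closed, and you try to iterate. But this dichotomy does not produce a single backward chain with controllable probability: you may reach a level at which \emph{all} predecessors are occupied-but-closed, and then there is neither a not-occupied predecessor to continue along nor a long ``chain of $G$-failures'' to invoke subcriticality on. Equivalently, ``$(z,n)$ dry'' does not by itself imply the existence of a dry $\mathcal H_1$-path to level~$0$. This is why you are pushed to the restart argument, and why you correctly flag that step as the unfinished ``heart of the proof''.

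The paper sidesteps this by tracing not in the block percolation but in the \emph{spatial game}: since a type-$1$ player can only flip if she has a type-$2$ neighbor, the event $x\notin\eta_t$ yields a space-time chain $x_0,\ldots,x_m=x$ with $x_{j+1}\in N_{x_j}$ and $x_j\notin\eta_s$ on $[s_j,s_{j+1}]$, starting at time~$0$. Every block touched by this chain is therefore not occupied, hence dry by the coupling. The chain genuinely reaches level~$0$, so one gets an honest dry path of length at least~$n$ with no appeal to the initial block density. The only wrinkle is that consecutive $x_j,x_{j+1}$ may lie in spatially adjacent blocks at the \emph{same} time level, so the induced block path uses horizontal moves $(z,n)\to(z\pm 2e_i,n)$ in addition to the $\mathcal H_1$ edges. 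This is exactly why the paper introduces the enriched graph $\mathcal H_2$ and invokes the extension of Durrett's lemmas from~\cite{durrett_1992} noted in~\cite{lanchier_2012}: for $\ep$ small, $P((w,0)\to_2(z,n)\text{ for some }w)\to 0$. That single estimate gives $P(x\notin\eta_t)\to 0$ directly, with no restart and no need to first boost the density of occupied blocks.
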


\begin{pf}
Following an idea of Lanchier \cite{lanchier_2012} we introduce the
new oriented graph $\mathcal H_2$ with the same vertex set as the
oriented graph $\mathcal H_1$ but in which there is an oriented edge
\begin{eqnarray} (z, n) \to\bigl(z', n'
\bigr) \nonumber
\\
\eqntext{\mbox{if and only if } \bigl(z' = z \pm e_i \mbox{ for some } i = 1, 2,
\ldots, d \mbox{ and } n' = n + 1\bigr)}\\
\eqntext{\mbox{or }
\bigl(z' = z \pm2 e_i \mbox{ for some } i = 1, 2, \ldots,
d \mbox { and } n' = n\bigr).}
\end{eqnarray}
See the right-hand side of Figure~\ref{fig:graphs} for a picture in $d
= 1$.
%
\begin{figure}

\includegraphics{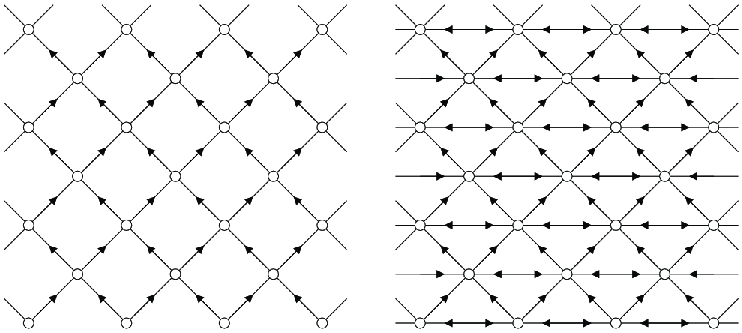}

\caption{Picture of the graphs $\mathcal H_1$ and $\mathcal
H_2$ in dimension $d = 1$.}
\label{fig:graphs}
\end{figure}
We say that a site is dry if it is not wet for oriented site
percolation on the graph $\mathcal H_1$.
Also, we write
\[
(w, 0) \to_j (z, n) \qquad\mbox{for } j = 1, 2,
\]
and say that there is a dry path in $\mathcal H_j$ connecting both
sites if there exist
\[
(z_0, 0) = (w, 0), (z_1, n_1), \ldots,
(z_{k - 1}, n_{k - 1}), (z_k, n_k) =
(z, n) \in H
\]
such that the following two conditions hold:
\begin{longlist}[(1)]
\item[(1)] $(z_i, n_i) \to(z_{i + 1}, n_{i + 1})$ is an oriented edge
in $\mathcal H_j$ for all $i = 0, 1, \ldots, k - 1$, and
\item[(2)] the site $(z_i, n_i)$ is dry for all $i = 0, 1, \ldots, k$.
\end{longlist}
Note that a dry path in $\mathcal H_1$ is also a dry path in $\mathcal
H_2$, but the reciprocal is false since the latter has more oriented
edges than the former.
The key to the proof is the following result:
if sites are closed with probability $\ep> 0$ sufficiently small, then
%
\begin{equation}
\label{eq:dry-1} %
 \lim_{n \to\infty} P
\bigl((w, 0) \to_2 (z, n) \mbox{ for some } w \in\Z^d\bigr)
= 0.
\end{equation}
In other words, if the density of open sites is close enough to one
then dry sites do not percolate even with the additional edges in
$\mathcal H_2$.
The proof for dry paths in the graph $\mathcal H_1$ directly follows
from Lemmas 4--11 in Durrett \cite{durrett_1992} but as pointed out in
\cite{lanchier_2012},
the proof easily extends to give the analog for dry paths in the
oriented graph $\mathcal H_2$.
To complete the proof, the last step is to show the connection between
dry paths and occupied sites.
Assume that
%
\begin{equation}
\label{eq:event-1} x \notin\eta_t \qquad\mbox{for some } (x, t) \in(Kz, cnK) +
B_K \times(0, cK),
\end{equation}
where $(z, n) \in H$.
Since a type~1 player can only change her strategy if there is a
type~2 player in her neighborhood, the event in \eqref{eq:event-1}
implies the existence of
\[
x_0, x_1, \ldots, x_m = x \in
\Z^d \quad\mbox{and}\quad s_0 = 0 < s_1 < \cdots<
s_{m + 1} = t
\]
such that the following two conditions hold:
\begin{longlist}[(1)]
\item[(1)] for all $j = 0, 1, \ldots, m$, we have $x_j \notin\eta_s$
for $s \in[s_j, s_{j + 1}]$, and
\item[(2)] for all $j = 0, 1, \ldots, m - 1$, we have $x_{j + 1} \in N_{x_j}$.
\end{longlist}
In particular, the spatial game being coupled with one dependent
oriented site percolation as in Lemma~\ref{lem:richardson-perco}, the event
in \eqref{eq:event-1} implies that there exists a dry path
%
\begin{equation}
\label{eq:event-2} (w, 0) \to_2 (z, n) \qquad\mbox{for some } w \in
\Z^d
\end{equation}
provided that the range $M \leq K$.
Note, however, that this does not imply the existence of a dry path in
the graph $\mathcal H_1$, which is the reason why we introduced a new
graph with additional edges.
In conclusion, the event in \eqref{eq:event-1} is a subset of the
event \eqref{eq:event-2}.
Since in addition $\ep$ can be made arbitrarily small by choosing the
parameter $K$ sufficiently large according to Lemma~\ref
{lem:richardson-perco}, taking the
probability of the two events above and using \eqref{eq:dry-1} implies that
\[
\lim_{t \to\infty} P (x \notin\eta_t) \leq \lim
_{n \to\infty} P \bigl((w, 0) \to_2 (z, n) \mbox{ for some } w
\in\Z^d\bigr) = 0
\]
for all $x \in\Z^d$ where $(z, n)$ is as in \eqref{eq:event-1}.
This completes the proof.
\end{pf}

To complete the proof of the theorem, we let $\rho> 0$ as in the
previous lemma.
Then, having arbitrary payoffs $a_{12}$ and $a_{21}$ and $a_{22}$,
there exists $m$ large such that $\bar a_{11} \in(- m \rho, m \rho)$.
Since in addition the limiting behavior of the spatial game remains
unchanged by speeding up time, Lemma~\ref{lem:richardson-dry} implies
that strategy~1 wins for all $a_{11} > m$.


\section{Proof of Theorem~\texorpdfstring{\protect\ref{th:walks}}{3}}
\label{sec:walks}

This section is devoted to the proof of Theorem~\ref{th:walks}.
The intuition behind this result is simple, though the arguments to
make the proof rigorous are somewhat more challenging.
To understand the theorem heuristically, observe that, in the limiting
case $a_{11} = - \infty$ and provided one starts from a
suitable initial configuration, the process becomes instantaneously
sparse: configurations where two type~1 players are
neighbors are not possible.
Since in addition type~2 players can only change their strategy when
they are located in the neighborhood of at least one type~1 player,
the process is dominated by a system of annihilating particles:
as long as several particles are in the same interaction neighborhood,
one of them is instantaneously killed.
In particular, the density of type~1 players can only decrease.
Under the assumption $a_{12} < 0$, these particles also die
spontaneously, which implies that the density of type~1 players
decreases to zero.

The main difficulty to prove the theorem is to extend this
heuristic argument to the nonlimiting case when the payoff $a_{11}$ is
small but different from $- \infty$.
We start with some key definitions and a brief overview of the global
strategy of our proof.
Identifying again configurations with the set of type~1 players, we
say that a set/configuration $\eta$ is \emph{sparse} whenever
\[
x, y \in\eta\qquad\mbox{implies that } y \notin N_x.
\]
We also say that configuration $\eta$ is sparse in $B$ if the set
$\eta
\cap B$ is sparse.
For the spatial game, we say that there is a \emph{type~\textup{1} invasion
path} $(x, r) \leadsto(y, t)$ if there are
\[
x_0 = x, x_1, \ldots, x_n = y \in
\Z^d \quad\mbox{and}\quad s_0 = r < s_1 < \cdots<
s_n < s_{n + 1} = t \in\R_+
\]
such that the following three conditions hold:
\begin{itemize}
\item for $i = 0, 1, \ldots, n$, we have $x_i \in\eta_s$ for all $s_i
\leq s \leq s_{i + 1}$;
\item for $i = 1, 2, \ldots, n$, we have $x_i \notin\eta_{s_i-}$;

\item for $i = 1, 2, \ldots, n$, we have $x_i \in N_{x_{i - 1}}$.
\end{itemize}
Note that there exists a type~1 invasion path $(x, 0) \leadsto(y, t)$
if and only if the player at site $y$ at time $t$ follows strategy~1
since type~2 players can only change their strategy if they are in the
neighborhood of a type~1 player.
Finally, we call an invasion path:
\begin{itemize}
\item an \emph{inner path} whenever $x_i \in B_{4K - M}$ for all $i
= 0,
1, \ldots, n$,
\item an \emph{outer path} whenever $x_i \notin B_{4K}$ for all $i = 0,
1, \ldots, n$,
\item a \emph{transversal path} whenever $x_i \in B_{4K} \setminus B_{4K
- M}$ for some $i = 0, 1, \ldots, n$.
\end{itemize}
To prove extinction of strategy~1 using a block construction, the main
ingredient is to prove that if the region $B_K$ is empty initially,
then the region $B_{2K}$ will, with probability close to one for
suitable parameters, be empty at a later time that we choose to be $2
\sqrt K$.
To show this result, we observe that, since type~1 players are located
on type~1 invasion paths, it suffices to prove that:
\begin{longlist}[(1)]
\item[(1)] the probability that an inner path lasts more than $2 \sqrt
K$ units of time is small, and
\item[(2)]the probability that a transversal path reaches $B_{2K}$ by
time $2 \sqrt K$ is small.
\end{longlist}
Note that outer paths are unimportant in proving the theorem because,
by definition, they do not reach the target region.
The proof of the second assertion simply relies on the fact that, with
probability close to one when $K$ is large, and regardless of the value
of the payoffs,
invasion paths expand at most linearly.
The proof of the first assertion is more involved and is divided into
three steps.
First, we show that the process is sparse in $B_{4K}$ by time 1, then
that it is sparse a positive fraction of time in this box until
time $2 \sqrt K$, and finally that conditional on this previous event,
inner paths die out exponentially fast.
Throughout this section:
\begin{itemize}
\item$\mathcal S$ denotes the set of sparse configurations,
\item$\Omega_{\mathrm{in}}$ is the event that there is an inner path from time
0 to time $2 \sqrt K$ and
\item$\Omega_{\mathrm{tr}}$ is the event that a transversal path reaches
$B_{2K}$ by time $2 \sqrt K$.
\end{itemize}
To begin with, we prove that if the region $B_K$ is initially void in
type~1 players, then the process becomes sparse in $B_{4K}$ after a
short time.

%
\begin{lemma}
\label{lem:walks-sparse}
Let $\ep> 0$.
For all $K$, there exists $m_1:= m_1 (\ep, K, \bar a_{11}) < \infty$
such that
\begin{eqnarray}
P \bigl(\eta_s \cap B_{4K} \in\mathcal S \mbox{ for some }
s \in(0, 1) | \eta_0 \cap B_K = \varnothing\bigr) \geq1
- \ep/ 3\nonumber \\
\eqntext{\mbox {for all } a_{11} < - m_1.}
\end{eqnarray}
\end{lemma}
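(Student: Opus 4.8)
The plan is to exploit a separation of time scales: as $a_{11} \to - \infty$, the rate at which a type~1 player having at least one type~1 neighbor \emph{and} at least one type~2 neighbor (call such a player \emph{exposed}) switches to type~2 diverges, while the rate at which any type~2 player ever switches to type~1 stays bounded. Inspecting \eqref{eq:brown-hansell}, one sees that once $-a_{11}$ exceeds a threshold $m_0 (\bar a_{11})$ depending only on $\bar a_{11}$ and the neighborhood size $N$ — chosen large enough that every type~1 player with a type~1 neighbor has a strictly negative payoff, hence cannot give birth — one may use
$$ \lambda (a_{11}) \ := \ - a_{11} N^{-2} - |a_{12}| N^{-1} \ \longrightarrow \ \infty \qquad \hbox{and} \qquad \beta \ := \ |a_{12}| + |a_{21}| + |a_{22}|, $$
the key point being that the upper bound $\beta$ on the $2 \to 1$ rate does not depend on $a_{11}$. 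Call a type~1 player \emph{clustered} if it has a type~1 neighbor, and let $R_s$ be the complement of the set of clustered type~1 players at time $s$, i.e.
$$ R_s \ := \ \{x : \eta_s (x) = 2 \} \ \cup \ \{x : \eta_s (x) = 1 \ \hbox{and $x$ has no type~1 neighbor} \}. $$
Then $\eta_0 \cap B_K = \varnothing$ forces $B_K \subset R_0$, and if $B_{4K} \subset R_s$ then every type~1 site of $B_{4K}$ has no type~1 neighbor at all, so $\eta_s \cap B_{4K} \in \mathcal S$. Hence it suffices to prove that $R_s \supset B_{4K}$ for some $s < 1$ with probability at least $1 - \ep / 3$.

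The engine of the proof is that $R_s$ spreads like first-passage percolation run at rate $\lambda (a_{11})$. Indeed, if $x \notin R_s$ has a neighbor $z \in R_s$, then $x$ is a clustered type~1 player; $z$ cannot be an isolated type~1 player, since it has the type~1 neighbor $x$, so $z$ is type~2; thus $x$ is exposed and joins $R_s$, by switching to type~2, at rate at least $\lambda (a_{11})$, and one checks that this switch cannot simultaneously eject any site from $R_s$. Conversely, $R_s$ can lose a site $x$ only through a $2 \to 1$ flip at $x$ or at a neighbor of $x$, an event of total rate at most $(N + 1) \beta$ per site. Fix the horizon $T = T (a_{11}) := |B_{5K}| / \lambda (a_{11})$, which tends to $0$ as $a_{11} \to - \infty$. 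Since the number of $2 \to 1$ flips occurring in a bounded enlargement of $B_{5K}$ before time $T$ is dominated by a Poisson variable whose mean is a fixed multiple of $|B_{5K}| T = |B_{5K}|^2 / \lambda (a_{11}) \to 0$, the event $G$ that no site of $B_{5K}$ ever leaves $R_s$ before time $T$ has probability tending to one. On $G$ the set $R_s \cap B_{4K}$ is nondecreasing, starts containing $B_K$, and — as long as it is a proper subset of $B_{4K}$ — contains a frontier site in $B_{4K}$ that joins it at rate at least $\lambda (a_{11})$; serializing these at most $|B_{4K}|$ successive joins and using the memoryless property, the time $\Xi$ at which $R_s$ engulfs $B_{4K}$ is, on $G$, stochastically dominated by a sum of $|B_{4K}|$ independent $\mathrm{Exp} (\lambda (a_{11}))$ variables, so that $P \,(\Xi \geq 1, \ G) \leq |B_{4K}| / \lambda (a_{11}) \leq T$ by Markov's inequality.

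Combining the two estimates,
$$ P \,(\eta_s \cap B_{4K} \notin \mathcal S \ \hbox{for all} \ s \in (0, 1) \mid \eta_0 \cap B_K = \varnothing) \ \leq \ P \,(G^c) \ + \ T \ \longrightarrow \ 0 \qquad (a_{11} \to - \infty), $$
so it suffices to take $m_1 = m_1 (\ep, K, \bar a_{11}) \geq m_0 (\bar a_{11})$ large enough that the right-hand side drops below $\ep / 3$ for all $a_{11} < - m_1$. I expect the delicate step to be the first-passage comparison of the second paragraph: one must argue that the frontier of $R_s$ genuinely stays adjacent to type~2 sites — which is precisely why it pays to track the clustered players rather than the type~1 players directly, and to allow a slightly enlarged box — and one must control the boundedly many adverse $2 \to 1$ flips near the frontier uniformly in $a_{11}$. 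The configuration to keep in mind throughout is the extreme one permitted by the hypothesis, in which all of $B_{4K} \setminus B_K$ starts out occupied by type~1 players and has to be eroded outward from the seed $B_K$.
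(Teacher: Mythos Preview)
Your approach differs substantially from the paper's. The paper first treats the degenerate case $a_{11} = -1$, $\bar a_{11} = 0$, in which type~2 players never flip while exposed type~1 players die at a positive rate, so the configuration in $B_{4K}$ becomes sparse by some deterministic time $aK$; continuity of the transition rates then extends this to all $\bar a_{11} \in (-\rho, \rho)^3$; finally the scale invariance of the model (multiplying every payoff by $m_1$ merely speeds time up by $m_1$) converts ``sparse by time $aK$ for tiny $\bar a_{11}$'' into ``sparse by time $aK/m_1 < 1$ for the given $\bar a_{11}$'' once $a_{11} < -m_1$. Your first-passage comparison via the set $R_s$ of non-clustered sites is more direct and avoids the rescaling trick entirely; the observation that a boundary site of $R_s^c$ is automatically exposed --- because an isolated type~1 player in $R_s$ cannot neighbor a type~1 player outside $R_s$ --- is exactly the right structural point, and the uniform bound $\beta$ on the $2 \to 1$ rate is correct once clustered type~1 players are barred from giving birth.

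There is, however, a quantitative slip in your second paragraph. The event $G$ only guarantees monotonicity of $R_s \cap B_{4K}$ on $[0, T]$, so the Gamma domination of $\Xi$ is only valid up to time $T$: on $G$ one gets $\{\Xi > T\} \subset \{S > T\}$, not $\{\Xi \geq 1\} \subset \{S \geq 1\}$, and hence Markov against $1$ is not justified. With your choice $T = |B_{5K}|/\lambda$ this leaves $P(\Xi \geq 1, G) \leq P(S > T) \leq E[S]/T = |B_{4K}|/|B_{5K}|$, a constant in $a_{11}$, so $P(G^c) + P(\Xi \geq 1, G)$ does not tend to zero. The repair is painless: pick any horizon with $T \to 0$ but $\lambda T \to \infty$, for instance $T = \lambda^{-1/2}$. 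Then $P(G^c)$ is still $O(T) \to 0$ while $P(S > T) \leq |B_{4K}|/(\lambda T) \to 0$, and the rest of your argument goes through unchanged.
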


\begin{pf}
To begin with, we observe that when $a_{11} = -1$ and $\bar a_{11} = 0$:
\begin{itemize}
\item type~1 players with at least one type~1 neighbor and at least one
type~2 neighbor change their strategy at a positive rate whereas

\item type~2 players all have a zero payoff, so they do not change
their strategy.
\end{itemize}
This implies that there exists $a:= a (\ep) > 0$ such that, for all $K
> 0$,
\[
P (\eta_{aK} \cap B_{4K} \notin\mathcal S |
\eta_0 \cap B_K = \varnothing) \leq\ep/ 6 \qquad\mbox{when }
\bar a_{11} = 0,
\]
which in turn implies the existence of $\rho:= \rho(K, \ep)$ such that
%
\begin{equation}
\label{eq:sparse} P (\eta_{aK} \cap B_{4K} \notin\mathcal S |
\eta_0 \cap B_K = \varnothing) \leq\ep/ 3 \qquad\mbox{for
all } \bar a_{11} \in(- \rho, \rho)^3.
\end{equation}
For arbitrary $\bar a_{11}$, we fix $m_1:= m_1 (\bar a_{11}, \rho)$
such that
\[
\bar a_{11} \in(- m_1 \rho, m_1
\rho)^3 \quad\mbox{and}\quad m_1 > aK.
\]
Then, \eqref{eq:sparse} directly implies that
\[
P (\eta_s \cap B_{4K} \notin\mathcal S |
\eta_0 \cap B_K = \varnothing) \leq\ep/ 3
\]
for all $a_{11} < - m_1$ and for $s = aK / m_1 < 1$, which proves the lemma.
\end{pf}

Before proving that inner paths die out exponentially fast, we need an
additional preliminary result that
ensures that the configuration in~$B_{4K}$ is almost sparse for a
large amount of time when~$a_{11}$ is small.
The proof slightly differs depending on the sign of the two
payoffs~$a_{22}$ and~$a_{21}$.
Since the proof when these two payoffs are negative is more difficult
and requires additional arguments, we only focus on this case.
Under this assumption and the assumptions of the theorem, all four
payoffs are negative, in which case all the players have a positive death
rate and the graphical representation of the process can be
reformulated in the following manner.
We introduce the following collections of independent random variables:
for all $x \in\Z^d$ and $i, j = 1, 2$ and $n > 0$:
\begin{itemize}
\item we let $T_n (x, i, j) =$ the $n$th arrival time of a Poisson
process with rate $- a_{ij}$;
\item we let $U_n (x, i, j) =$ a uniform random variable over the
interaction neighborhood $N_x$;
\item we let $V_n (x, i, j) =$ a uniform random variable over the
interaction neighborhood $N_x$.
\end{itemize}
At the arrival times $T_n (x, i, j)$, we draw
%
\begin{equation}
\label{eq:walks-arrow} \mbox{an arrow } V_n (x, i, j) \to x \qquad\mbox{with the
label } \bigl(U_n (x, i, j), i, j\bigr)
\end{equation}
and say that this arrow is \emph{active} whenever
\[
\eta_{t-} (x) = i \quad\mbox{and}\quad \eta_{t-} \bigl(U_n
(x, i, j)\bigr) = j \qquad\mbox{where } t:= T_n (x, i, j).
\]
Given an initial configuration and a realization of this graphical
representation, the process can be constructed going forward in time by setting
\begin{eqnarray*} \eta_t (x) &:= &
\eta_{t-} \bigl(V_n (x, i, j)\bigr) \qquad \mbox{if the arrow in
\eqref{eq:walks-arrow} is active},
\\
\eta_t (x) &:= & \eta_{t-} (x) \qquad \mbox{if the arrow in
\eqref {eq:walks-arrow} is not active},
\end{eqnarray*}
where again $t:= T_n (x, i, j)$.
For any given $K > 0$, we let $\tau_0 = 0$ and
\begin{eqnarray*} \tau_i &:= & \inf\bigl
\{T_n (x, 2, j) > \tau_{i - 1} \mbox{ for some } n > 0 \mbox{ and }
x \in B_{4K} \mbox{ and } j = 1, 2 \bigr\},
\\
\rho_i &:= & (1/2) (\tau_i + \tau_{i + 1})\qquad
\mbox{for } i \geq1
\end{eqnarray*}
and say that the arrow at time $\tau_i$ is \emph{good} whenever
\begin{eqnarray*}&&\mbox{there is at least one $(x, 1, 1)$-arrow
$z' \to z$} \\
&&\qquad\mbox{for all $z \in N_x$ and all
$z' \in N_z$ between time $\tau_i$ and
time $\rho_i$},
\end{eqnarray*}
where vertex $x$ is the vertex fixed while defining $\tau_i$.
Note that if the arrow at time $\tau_i$ is good, and the configuration
in $B_{4K}$ is sparse just before $\tau_i$, and the player at $x$
becomes of type~1
at time $\tau_i$, then all the type~1 players in the neighborhood of
$x$ become of type~2 by time $\rho_i$ unless the player at $x$ changes
her strategy before.
In either case, the configuration will be sparse in $B_{4K}$ between
the two times $\rho_i$ and $\tau_{i + 1}$.
Define the stopping time
\[
\sigma_K:= \inf\{\tau_i \dvtx \mbox{ the arrow at time
$\tau_i$ is not good} \}.
\]
Then we have the following lemma.

%
\begin{lemma}
\label{lem:walks-good}
Let $\ep> 0$. For all $K$, there exists $m_2:= m_2 (\ep, K, \bar
a_{11}) < \infty$ such that
\[
P (\sigma_K < 2 \sqrt K) \leq\ep/ 6 \qquad\mbox{for all } a_{11}
< - m_2.
\]
\end{lemma}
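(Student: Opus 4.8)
The plan is to note that Lemma~\ref{lem:walks-good} is a statement purely about the graphical representation described just above it and uses none of the dynamics of $\eta_t$: it only concerns the Poisson arrows. The first step is to record that, since we are in the case where all four payoffs are negative, the times $\tau_1 < \tau_2 < \cdots$ at which a $(\cdot, 2, j)$-arrow with head in $B_{4K}$ is drawn form a Poisson process of rate
$$ \lambda \ := \ |B_{4K}| \,(- a_{21} - a_{22}) \ = \ (8K + 1)^d \,(- a_{21} - a_{22}), $$
which depends on $K$ and $\bar a_{11}$ but \emph{not} on $a_{11}$. Hence the number of such times falling in $[0, 2 \sqrt K)$ is Poisson with parameter $2 \lambda \sqrt K$, and we may fix once and for all an integer $N_0 = N_0 (\ep, K, \bar a_{11})$ with $P \,(\# \{i : \tau_i < 2 \sqrt K \} > N_0) \leq \ep / 12$, again uniformly in $a_{11}$.

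Next I would estimate, for each $i$, the conditional probability that the arrow at $\tau_i$ fails to be good. Writing $x$ for the head of that arrow, goodness requires, for each of the at most $N^2$ ordered pairs $(z, z')$ with $z \in N_x$ and $z' \in N_z$, at least one $(1,1)$-arrow $z' \to z$ in the interval $(\tau_i, \rho_i)$. For fixed $z$ the $(1,1)$-arrows with head $z$ form a rate $- a_{11}$ Poisson process whose tail is uniform on $N_z$, so for fixed $(z, z')$ these arrows arrive at rate $- a_{11} / N$. Setting $s := \rho_i - \tau_i = \tfrac12 (\tau_{i+1} - \tau_i)$, a union bound gives
$$ P \,(\hbox{arrow at } \tau_i \hbox{ not good} \,| \,s) \ \leq \ \min \big(1, \ N^2 \exp (a_{11} s / N) \big). $$
By the strong Markov property of the Poisson process of rate $\lambda$, together with the independence of the $(1,1)$-arrows from the $(\cdot, 2, j)$-arrows, $s$ is $\exponential (2 \lambda)$ and independent of the past, so it remains to bound $E \,[\min (1, N^2 \exp (a_{11} s / N))]$ for $s \sim \exponential (2 \lambda)$.

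The one delicate point, and the main obstacle, is that $s$ can be arbitrarily small, so one cannot simply say that many $(1,1)$-arrows appear in $(\tau_i, \rho_i)$. I would deal with this by splitting at $s_0 := N \ln (N^2) / (- a_{11})$: on $\{s < s_0 \}$ I bound the integrand by $1$, contributing $P \,(s < s_0) \leq 2 \lambda s_0 = 2 \lambda N \ln (N^2) / (- a_{11})$; on $\{s \geq s_0 \}$ I bound it by $N^2 \exp (a_{11} s / N)$, whose expectation is $N^2 \cdot 2 \lambda N / (2 \lambda N - a_{11})$. Both contributions tend to $0$ as $a_{11} \to - \infty$, so $P \,(\hbox{arrow at } \tau_i \hbox{ not good}) \leq \delta (a_{11})$ for every $i$, where $\delta (a_{11}) = \delta (a_{11}, K, \bar a_{11}) \to 0$ as $a_{11} \to - \infty$.

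Finally, letting $\mathcal A$ be the event that some $\tau_i < 2 \sqrt K$ carries a bad arrow and $\mathcal B$ the event $\# \{i : \tau_i < 2 \sqrt K \} > N_0$, one has $\{\sigma_K < 2 \sqrt K \} = \mathcal A$ and $\mathcal A \cap \mathcal B^c \subset \bigcup_{i = 1}^{N_0} \{\hbox{arrow at } \tau_i \hbox{ not good} \}$, hence
$$ P \,(\sigma_K < 2 \sqrt K) \ \leq \ N_0 \,\delta (a_{11}) \ + \ P \,(\mathcal B) \ \leq \ N_0 \,\delta (a_{11}) \ + \ \ep / 12. $$
It then suffices to choose $m_2 = m_2 (\ep, K, \bar a_{11})$ large enough that $\delta (a_{11}) \leq \ep / (12 \,N_0)$ whenever $a_{11} < - m_2$, which is possible because $N_0$ was fixed independently of $a_{11}$. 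This yields $P \,(\sigma_K < 2 \sqrt K) \leq \ep / 6$, as desired. Beyond the two-regime split handling the possible smallness of $(\tau_i, \rho_i)$, everything is a routine first-moment and union-bound argument, and, as emphasized, no property of the particle system itself is invoked.
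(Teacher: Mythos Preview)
Your proof is correct and follows essentially the same approach as the paper's: both arguments first bound the Poisson number of $(\cdot,2,j)$-arrows landing in $B_{4K}$ before time $2\sqrt K$ (which is independent of $a_{11}$), then control, for each such arrow, the probability that the requisite $(1,1)$-arrows fail to appear in the half-gap $(\tau_i,\rho_i)$, and finish by a union bound. The only cosmetic difference is that the paper first fixes a threshold $\rho$, bounds separately the probability that some gap $\tau_{i+1}-\tau_i$ falls below $2\rho$, and then conditions on all gaps exceeding $2\rho$; you instead integrate over the exponential law of $s=\rho_i-\tau_i$ directly with the split at $s_0$, which packages the same two contributions into a single $\delta(a_{11})\to 0$.
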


\begin{pf}
First, we observe that
\[
J:= \sup\{j \dvtx \tau_j < 2 \sqrt K \} = \poisson\bigl(- 2
(a_{21} + a_{22}) \sqrt K (8K + 1)^d\bigr)
\]
and fix $m:= m (\ep, K, \bar a_{11})$ such that
%
\begin{equation}
\label{eq:good-1} P (J > m) \leq\ep/ 18.
\end{equation}
Letting $\rho:= \rho(\ep, K, \bar a_{11}) = - \ep(36 m (a_{21}
+ a_{22})(8K + 1)^d)^{-1}$ and using that
\[
\tau_{i + 1} - \tau_i = \exponential\bigl(-
(a_{21} + a_{22}) (8K + 1)^d\bigr),
\]
we also have
%
\begin{eqnarray}
\label{eq:good-2} %
&&P (\tau_{i + 1} -
\tau_i < 2 \rho\mbox{ for some } i = 0, 1, \ldots, J - 1 | J \leq m)
\nonumber\\
&&\qquad\leq P (\tau_{i + 1} - \tau_i < 2 \rho\mbox{ for some } i = 0,
1, \ldots, m - 1)
\nonumber
\\[-8pt]
\\[-8pt]
\nonumber
&&\qquad\leq m \bigl(1 - \exp\bigl(2 (a_{21} + a_{22}) (8K +
1)^d \rho\bigr)\bigr)
\\
&&\qquad\leq- 2 m (a_{21} + a_{22}) (8K + 1)^d \rho=
\ep/ 18. \nonumber
\end{eqnarray}
Finally, since for all $x \in B_{4K}$, all $z \in N_x$ and all $z' \in N_z$,
\begin{eqnarray*} &&P \bigl(\mbox{there is no $(x, 1, 1)$-arrow
$z' \to z$ between time $\tau _i$ and time $
\tau_i + \rho$}\bigr)\\
&&\qquad = P \bigl(\exponential\bigl(-
a_{11} (2M + 1)^{-2d}\bigr) > \rho\bigr) = \exp
\bigl(a_{11} (2M + 1)^{-2d} \rho\bigr)
\end{eqnarray*}
defining $m_2:= m_2 (\ep, K, \bar a_{11})$ by
\[
m_2:= - (2M + 1)^{2d} \rho^{-1} \ln\bigl(\ep
\bigl(18 m (2M + 1)^{2d}\bigr)^{-1}\bigr) > 0,
\]
we obtain the conditional probability
%
\begin{eqnarray}
\label{eq:good-3} %
&&P (\sigma_K < \sqrt K
| J \leq m \mbox{ and } \tau_{i + 1} - \tau_i > 2 \rho\mbox{ for
all } i = 0, 1, \ldots, J - 1) \nonumber\\
&&\qquad\leq m P (\mbox{the arrow at time }
\tau_i \mbox{ is not good } | \tau_{i + 1} - \tau_i
> 2 \rho)\\
&&\qquad \leq m (2M + 1)^{2d} \exp\bigl(a_{11} (2M +
1)^{- 2d} \rho\bigr) \leq \ep/ 18\nonumber
\end{eqnarray}
for all $a_{11} < - m_2$.
The result follows by observing that the probability to be estimated
is smaller than the sum of the three probabilities in \eqref
{eq:good-1}--\eqref{eq:good-3}.
\end{pf}

With Lemma~\ref{lem:walks-good} in hand, we are now ready to prove
that the conditional probability given that the initial configuration
is sparse, that an inner path
lasts more than $2 \sqrt K$ units of time is small when the scaling
parameter $K$ is large, and the payoff $a_{11}$ is small.

%
\begin{lemma}
\label{lem:walks-vertical}
Let $\ep> 0$ and $a_{12} < 0$. Then
\[
P (\Omega_{\mathrm{in}} | \eta_0 \cap B_{4k} \in
\mathcal S) \leq\ep / 3\qquad  \mbox{for all $K$ large and } a_{11} < -
m_2.
\]
\end{lemma}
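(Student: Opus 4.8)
The plan is to combine Lemma~\ref{lem:walks-good} with a stochastic domination of the inner invasion paths by a pure death process. Following the case distinction made just before Lemma~\ref{lem:walks-good}, I treat the case $a_{21}, a_{22} < 0$, so that under the assumptions of the theorem all four payoffs are negative and the graphical representation~\eqref{eq:walks-arrow} applies; enlarging $m_2$ if needed, I also assume $a_{11} < - m_2 \leq a_{12} < 0$. Since $\sigma_K$ is measurable with respect to the graphical representation alone, hence independent of $\eta_0$, Lemma~\ref{lem:walks-good} gives $P \,(\sigma_K < 2 \sqrt K \,|\, \eta_0 \cap B_{4K} \in \mathcal S) \leq \ep / 6$ for $a_{11} < - m_2$, so it suffices to prove that $P \,(\Omega_{in} \cap \{\sigma_K \geq 2 \sqrt K \} \,|\, \eta_0 \cap B_{4K} \in \mathcal S) \leq \ep / 6$ for all $K$ large.

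On the event $\{\sigma_K \geq 2 \sqrt K \}$ the configuration is sparse in $B_{4K}$ at all times of $(0, 2 \sqrt K)$ outside the windows $[\tau_i, \rho_i]$, and during each such window the only type~1 player outside the sparse pattern lies in the neighborhood $N_{x^\ast}$ of the site $x^\ast$ where the $2 \to 1$ flip at time $\tau_i$ occurred, every type~1 player of $N_{x^\ast}$ reverting to type~2 by $\rho_i$ unless $x^\ast$ flips first. Writing
$$ A_t \ := \ \{x \in B_{4K - M} : \eta_t (x) = 1 \ \hbox{and} \ (x_0, 0) \leadsto (x, t) \ \hbox{along an inner path for some} \ x_0 \}, $$
so that $\Omega_{in} = \{A_{2 \sqrt K} \neq \varnothing \}$ and $|A_0| \leq (8K + 1)^d$ because $\eta_0 \cap B_{4K-M}$ is sparse, I would argue two things. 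First, on the good event $|A_t|$ is non-increasing up to a transient increase of at most one unit per window: a site $x^\ast$ joins $A$ only through a $2 \to 1$ flip, and when it does, the predecessor of $x^\ast$ along the inner path witnessing $x^\ast \in A$ is a type~1 player of $N_{x^\ast} \cap B_{4K-M}$, hence a member of $A_{\tau_i -}$ which the cleaning of $N_{x^\ast}$ removes before $\rho_i$. Second, a type~1 player at $x$ flips to type~2 at rate $- \phi (x, \eta) \,f_2 (x, \eta)$ with $\phi (x, \eta) = a_{11} f_1 (x, \eta) + a_{12} f_2 (x, \eta) = a_{12} + (a_{11} - a_{12}) f_1 (x, \eta) \leq a_{12}$ because $a_{11} < a_{12}$, so this rate is at least $(- a_{12})(1 - f_1 (x, \eta))$; at a type~1 site of a sparse configuration $f_1 = 0$, and during a window only the freshly created $x^\ast$ can have $f_1 > 0$ without flipping almost immediately (every other type~1 player of $N_{x^\ast}$ has a type~1 neighbor and, $- a_{11}$ being huge, a very large death rate). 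Hence each type~1 lineage dies at rate at least $r_0 := - a_{12} > 0$ outside the short interval of length at most $\rho$ following each of its moves. Since moves occur at a rate bounded uniformly in $a_{11}$ and $K$, and $\rho = o(1)$ in $K$, the total shielded time of a lineage in $[0, 2 \sqrt K]$ is $o(1)$ with overwhelming probability, so a lineage survives to time $2 \sqrt K$ with probability at most $C \exp (- \gamma \sqrt K)$; a union bound over the at most $(8K + 1)^d$ lineages present at time~$0$ then bounds the displayed conditional probability by $(8K + 1)^d C \exp (- \gamma \sqrt K) \leq \ep / 6$ for $K$ large, giving $\ep / 3$ overall.

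The step I expect to be the main obstacle is making the stochastic domination of $|A_t|$ by a pure death process rigorous: one must verify that every addition to $A_t$ is compensated, inside the same cleaning window, by the deletion of a neighbor; that the transient windows in which a lineage carries two type~1 players cannot ``refresh'' its survival clock so as to defeat the exponential bound; and, above all, that the lower bound $r_0 = - a_{12}$ on the death rate of a type~1 player is \emph{uniform} in $a_{11}$. This last point is what makes the scheme work: a type~1 player with a sparse neighborhood has $f_1 = 0$, so its spontaneous death rate never sees the catastrophic rate $- a_{11}$, which only governs how fast the cleaning erases the excess type~1 players; thus sending $a_{11} \to - \infty$ speeds up the cleaning without slowing the extinction of the inner paths.
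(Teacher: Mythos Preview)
Your strategy is the paper's: split off $\{\sigma_K < 2\sqrt K\}$ via Lemma~\ref{lem:walks-good}, track the set of inner-path endpoints (the paper writes $Q_t$ where you write $A_t$), use that on the good event this set is sparse outside the windows $[\tau_i,\rho_i]$ and satisfies $\card Q_{\rho_i}\le\card Q_{\tau_i-}$, then kill each isolated type~1 player at rate $-a_{12}$ and take a union bound over at most $\card B_{4K}$ starting sites.

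The only place you diverge is the time-accounting, and there your argument has a small gap. You claim the shielded interval after each move of a lineage has length at most $\rho$, so that the total shielded time is $o(1)$; but the good event as defined in Lemma~\ref{lem:walks-good} only guarantees cleaning of $N_{x^*}$ by time $\rho_i$, not by time $\tau_i+\rho$ (the appearance of $\rho$ in that proof is only to lower-bound $\rho_i-\tau_i$). The paper sidesteps this by a coarser and simpler count: since $\rho_i$ is the midpoint of $[\tau_i,\tau_{i+1}]$, the cumulative sparse time on $[0,2\sqrt K]$ is at least $\sqrt K$, and that alone already yields
\[
P\,(\Omega_{in}\mid \eta_0\cap B_{4K}\in\mathcal S,\ \sigma_K\ge 2\sqrt K)\ \le\ (\card B_{4K})\,\exp(a_{12}\sqrt K)\ \le\ \ep/6
\]
for $K$ large. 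Your finer per-lineage accounting can be salvaged (either by strengthening the good event to require the $(\cdot,1,1)$-arrows in $[\tau_i,\tau_i+\rho]$, which the proof of Lemma~\ref{lem:walks-good} in fact shows, or by directly using the huge death rate $\sim -a_{11}/N$ of a type~1 player with a type~1 neighbor), but it buys nothing over the paper's half-the-time observation.
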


\begin{pf}
Recall that $\rho_j:= (1/2)(\tau_j + \tau_{j + 1})$ and introduce the
set-valued process
\[
Q_t:= \bigl\{y \in B_{4K} \dvtx \mbox{ there is an inner
path } (x, 0) \leadsto(y, t) \mbox{ for some } x \in B_{4K} \bigr\}.
\]
As pointed out above, if $\eta_0 \cap B_{4K}$ is sparse and $\sigma_K >
2 \sqrt K$, then:
\begin{itemize}
\item$Q_t \subset\eta_t \cap B_{4K}$ is sparse for all $t \in(\rho
_j, \tau_{j + 1})$, $j = 0, 1, \ldots, J - 1$, and
\item$\card Q_{\rho_j} \leq\card Q_{\tau_j}$ for all $j = 1, 2,
\ldots, J$.
\end{itemize}
Since in addition type~1 players with only type~2 neighbors (which is
the case for all type~1 players in sparse configurations) change their
strategy at rate $- a_{12}$,
\[
 \lim_{h \to0} h^{-1} P
\bigl(Q_{t + h} = Q_t - \{y \} \bigr) = - a_{12}\qquad
\mbox{for all } t \in(\rho_j, \tau_{j + 1}) \mbox{ and } y \in
Q_t.
\]
Using also that $Q_t$ is sparse for at least $\sqrt K$ time units
before time $2 \sqrt K$, we obtain
\begin{eqnarray*} &&P (\Omega_{\mathrm{in}} | \eta_0
\cap B_{4K} \in\mathcal S \mbox{ and } \sigma_K > 2 \sqrt K)\\
&&\qquad = P \bigl(Q_t \neq \varnothing\mbox{ for all } t \in(0, 2 \sqrt K) |
\eta_0 \cap B_{4K} \in\mathcal S \mbox{ and }
\sigma_K > 2 \sqrt K\bigr)
\\
&&\qquad\leq(\card B_{4K}) P \bigl(\exponential(- a_{12}) > \sqrt K
\bigr) \\
&&\qquad= (\card B_{4K}) \exp(a_{12} \sqrt K) \leq \ep/ 6
\end{eqnarray*}
for all $K$ large enough.
In particular, for all such $K$ and all $a_{11} < - m_2 (\ep, K, \bar a_{11})$,
\begin{eqnarray*} P (\Omega_{\mathrm{in}} | \eta_0
\cap B_{4K} \in\mathcal S) & \leq& P (\Omega_{\mathrm{in}} |
\eta_0 \cap B_{4K} \in\mathcal S \mbox{ and }
\sigma_K > 2 \sqrt K)+ P (\sigma _K < 2 \sqrt K)
\\
&\leq&\ep/ 6 + \ep/ 6 = \ep/ 3
\end{eqnarray*}
according to Lemma~\ref{lem:walks-sparse}.
This completes the proof.
\end{pf}

The next lemma shows the analog for transversal paths:
with probability close to one, none of the transversal paths reaches
the target region $B_{2K}$ by time $2 \sqrt K$.

%
\begin{lemma}
\label{lem:walks-horizontal}
Let $\ep> 0$ and $a_{12} < 0$. For all $K$ large, $P (\Omega_{\mathrm{tr}})
\leq\ep/ 3$.
\end{lemma}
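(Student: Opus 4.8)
The plan is to exploit that, in the regime under consideration where all four payoffs are negative, new type~1 players can only be produced through the death mechanism of type~2 players, at a rate bounded uniformly in $a_{11}$. Indeed, since $a_{11}, a_{12} < 0$, every type~1 player has negative payoff $a_{11} f_1 + a_{12} f_2 < 0$, so type~1 players never cause a birth; consequently the only way a site $x$ can flip to strategy~1 is that $x$ is of type~2 and adopts the strategy of one of its (necessarily type~1) neighbors, which happens at rate $\max\,(0, - \phi (x, \eta)) \,f_1 (x, \eta) \leq \max\,(- a_{21}, - a_{22}) =: \lambda_0$ because $f_1 + f_2 = 1$. In particular $\lambda_0$ does not depend on $a_{11}$. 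Hence the flip-to-$1$ events at each fixed site are dominated by the arrival times of a Poisson process of rate $\lambda_0$, and any type~1 invasion path determines, at each of its steps, a move to one of the $N$ neighbors together with such an arrival at the new site.

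Next I would translate the event $\Omega_{tr}$ into the existence of a long increasing path of such arrivals. A transversal path that reaches $B_{2K}$ contains both a vertex in the annulus $B_{4K} \setminus B_{4K - M}$, which is at $\ell^\infty$-distance more than $4K - M$ from the origin, and a vertex in $B_{2K}$, which is at $\ell^\infty$-distance at most $2K$; since consecutive vertices of an invasion path lie within $\ell^\infty$-distance $M$ of each other, the portion of the path joining these two vertices uses at least $n_0 := \lceil (2K - M)/M \rceil$ steps, and the associated flip-to-$1$ times are strictly increasing and contained in $[0, 2 \sqrt K]$. Therefore $\Omega_{tr}$ is contained in the union, over the at most $(8K + 1)^d$ sites $z$ of $B_{4K}$, of the event that there is an increasing path of length $n_0$ of flip-to-$1$ arrivals issued from $z$ with all its times in $[0, 2 \sqrt K]$.

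The last step is the standard ``finite speed of propagation'' estimate. Summing over the $N^{n_0}$ possible spatial paths issued from $z$ and bounding, for each, the expected number of increasing $n_0$-tuples of arrivals from the rate-$\lambda_0$ Poisson processes at the visited sites lying in $[0, 2 \sqrt K]$ by $\lambda_0^{n_0} \int_{0 < t_1 < \cdots < t_{n_0} < 2 \sqrt K} dt_1 \cdots dt_{n_0} = \lambda_0^{n_0} (2 \sqrt K)^{n_0} / n_0!$, Markov's inequality gives
$$ P \,(\hbox{there is such a path of length } n_0 \hbox{ from } z) \ \leq \ \frac{(2 N \lambda_0 \sqrt K)^{n_0}}{n_0!}. $$
Combining with the union bound above and using $n_0! \geq (n_0 / e)^{n_0}$ together with $n_0 \geq 2K/M - 1$ yields
$$ P \,(\Omega_{tr}) \ \leq \ (8K + 1)^d \bigg(\frac{2 e N \lambda_0 M \sqrt K}{2K - M}\bigg)^{n_0} \ \leq \ (8K + 1)^d \,\bigg(\frac{C}{\sqrt K}\bigg)^{n_0} $$
for a constant $C = C (N, M, \lambda_0)$ and all $K$ large; since $n_0$ grows linearly in $K$ whereas $(8K + 1)^d$ grows only polynomially, the right-hand side tends to zero, and in particular it is at most $\ep / 3$ for all $K$ sufficiently large.

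The only delicate point -- really the crux -- is the first step: the bound on the creation rate of type~1 players must be a constant $\lambda_0$ that is independent of $a_{11}$. Here this is immediate because type~1 players have negative payoff in our regime, but it is exactly the feature that makes a block construction available for arbitrarily small $a_{11}$; everything afterwards is the routine combinatorial estimate showing that graphical-representation paths cannot travel a distance of order $K$ in time of order $\sqrt K$.
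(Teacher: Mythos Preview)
Your proof is correct and follows essentially the same approach as the paper's: both exploit that in the all-negative-payoff regime the rate at which any site flips to type~1 is bounded by a constant depending only on $a_{21}, a_{22}$ (the paper uses $-(a_{21}+a_{22})$ from its graphical representation, you use the slightly sharper $\max(-a_{21},-a_{22})$), and then both run the standard path-counting/Stirling estimate to show that an invasion path cannot cover spatial distance of order $K$ in time $2\sqrt K$. The paper organizes the combinatorics a bit differently---it first bounds the Poisson number of arrows landing in the annulus and then sums over path lengths $l\geq K/M$, whereas you union over starting vertices and fix the minimal length $n_0$---but this is purely cosmetic; your identification of the $a_{11}$-independence of the flip-to-1 rate as ``the crux'' is exactly the point.
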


\begin{pf}
We introduce the rates
\begin{eqnarray*} \mu_K &:= & - 2 \sqrt K
(a_{21} + a_{22}) \card(B_{4K} \setminus
B_{4K - M}) > 0,
\\
\nu_K &:= & - 2 \sqrt K (a_{21} + a_{22}) > 0.
\end{eqnarray*}
First, we observe that
\begin{eqnarray*} m_K &:= & \mbox{the number of $(x,
2, j)$-arrows}
\\
& & \mbox{that point at the region } (B_{4K} \setminus
B_{4K - M}) \times(0, 2 \sqrt K)\\ &=& \poisson(\mu_K)
\end{eqnarray*}
from which it follows that
%
\begin{equation}
\label{eq:horizontal-2} P (m_K > 2 \mu_K) \leq\ep/ 6 \qquad\mbox{for
all $K$ sufficiently large}.
\end{equation}
Now, let $n_l$ be the number of type~1 invasion paths of length $l$
%
\begin{eqnarray}
\label{eq:horizontal-3} (x, t) \leadsto(y, 2 \sqrt K)
\nonumber
\\[-8pt]
\\[-8pt]
\eqntext{\mbox{starting from some } (x, t)
\in(B_{4K} \setminus B_{4K - M}) \times(0, 2 \sqrt K),}
\end{eqnarray}
and observe that, on the event that $m_K \leq2 \mu_K$, we have
%
\begin{eqnarray}
\label{eq:horizontal-4} n_l &\leq& 2 \mu_K (2M + 1)^{ld}
\nonumber
\\[-8pt]
\\[-8pt]
\nonumber
&=& - 4 \sqrt K (a_{21} + a_{22}) (2M + 1)^{ld}
\card(B_{4K} \setminus B_{4K - M}).
\end{eqnarray}
In addition, if in \eqref{eq:horizontal-3} site $y \in B_{2K}$, then
the length must be at least
\[
l \geq(2K - M) / M \geq K / M.
\]
Also, since each type~2 player changes her strategy at rate at most $-
(a_{21} + a_{22})$, the probability of any given type~1 invasion path
\eqref{eq:horizontal-3} of length at least $l \geq K / M$ is bounded by
%
\begin{eqnarray}
\label{eq:horizontal-5} %
P \bigl(\poisson(
\nu_K) \geq l\bigr) & = & \sum_{n = l}^{\infty}
\bigl(\nu_K^n / n!\bigr) e^{-\nu_K} \leq2 \bigl(
\nu _K^l/ l!\bigr) e^{- \nu_K}
\nonumber\\
& \leq& \bigl(4 e^{- \nu_K} / \sqrt{2 \pi l}\bigr) \bigl(l^{-1} e
\nu_K\bigr)^l \leq\bigl(l^{-1} e
\nu_K\bigr)^l
\nonumber
\\[-8pt]
\\[-8pt]
\nonumber
&\leq&\bigl(K^{-1} eM
\nu_K\bigr)^l
\\
& = & \bigl(- 2 (a_{21} + a_{22}) eM / \sqrt K
\bigr)^l \nonumber
\end{eqnarray}
for all $K$ sufficiently large where the second inequality follows from
Stirling's formula.
To complete the proof of the lemma, we combine \eqref
{eq:horizontal-2}, \eqref{eq:horizontal-4} and \eqref{eq:horizontal-5}
to obtain
\begin{eqnarray*} P (\Omega_{\mathrm{tr}}) & \leq& P
(m_K > 2 \mu_K) + P (\Omega_{\mathrm{tr}} |
m_K \leq2 \mu_K)
\\
& \leq& \ep/ 6 + \sum_{l = K/M}^{\infty} 2
\mu_K \bigl((2M + 1)^d \bigl(- 2 (a_{21} +
a_{22}) eM / \sqrt K\bigr)\bigr)^l \leq\ep/ 3
\end{eqnarray*}
for all $K$ sufficiently large.
\end{pf}

Having proved Lemmas~\ref{lem:walks-sparse}--\ref
{lem:walks-horizontal}, we are now ready to compare the process with
oriented site percolation and deduce
almost sure extinction of strategy~1.
The final part of the proof follows from the same arguments as for
Lemma~\ref{lem:richardson-dry}.
We say that a site $(z, n) \in H$ is \emph{void} whenever
\[
\eta_{2n \sqrt K} \cap(Kz + B_K) = \varnothing
\]
and define the set $\mathbb X_n$ of void sites at level $n$ by
\[
\mathbb X_n:= \bigl\{z \in\Z^d \dvtx (z, n) \in H
\mbox{ and is void} \bigr\}.
\]
The coupling with oriented site percolation is given in the next lemma.

%
\begin{lemma}
\label{lem:walks-perco}
Let $\ep> 0$ and $a_{12} < 0$.
Then there exist $K$ large and a coupling of the spatial game with
four dependent oriented site percolation such that
\[
\mathbb W_n^{1 - \ep} \subset\mathbb X_n \qquad\mbox{for
all } n \mbox{ whenever } \mathbb X_0 = \mathbb W_0^{1 - \ep}
\mbox { and } a_{11} < - \max(m_1, m_2).
\]
\end{lemma}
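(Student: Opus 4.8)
The plan is to run the block construction of Theorem~4.3 in Durrett~\cite{durrett_1995}; the analytic work has already been done in Lemmas~\ref{lem:walks-sparse}--\ref{lem:walks-horizontal}, so what remains is to package those estimates into a collection of good events and to check that voidness propagates. As in the preceding lemmas I would work with the reformulated graphical representation \eqref{eq:walks-arrow}, valid when all four payoffs are negative, the complementary case being handled by the analogous and simpler argument alluded to before Lemma~\ref{lem:walks-good}. For each $(z, n) \in H$ I would let $G (z, n)$ be the event, read off the graphical representation inside the space-time box $(Kz + B_{4K}) \times [2 n \sqrt K, \,2 (n + 1) \sqrt K]$, that no inner invasion path confined to $Kz + B_{4K - M}$ persists throughout this time interval and that no transversal invasion path started in the shell $Kz + (B_{4K} \setminus B_{4K - M})$ reaches $Kz + B_{2K}$ before time $2 (n + 1) \sqrt K$. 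By translation invariance in space and time, $P (G (z, n)) = P (G (0, 0))$, and combining Lemma~\ref{lem:walks-sparse} (the configuration becomes sparse in $B_{4K}$ in less than one time unit), Lemma~\ref{lem:walks-vertical} (from a sparse start no inner path survives to time $2 \sqrt K$ once $a_{11} < - m_2$) and Lemma~\ref{lem:walks-horizontal} (no transversal path reaches $B_{2K}$ by time $2 \sqrt K$) gives $P (G (0, 0)) \geq 1 - \ep$ for all $K$ large and all $a_{11} < - \max (m_1, m_2)$.

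The core step is the inclusion of events $G (z, n) \cap \{(z, n) \ \hbox{void}\} \subset \{(z \pm e_i, n + 1) \ \hbox{void}\}$ for every $i$, which I would prove through the invasion-path trichotomy. Assume $(z, n)$ is void, so $Kz + B_K$ carries no type~1 player at time $2 n \sqrt K$, and assume $G (z, n)$. If some $y \in K (z \pm e_i) + B_K \subset Kz + B_{2K}$ were of type~1 at time $2 (n + 1) \sqrt K$, there would be a type~1 invasion path ending at $(y, 2 (n + 1) \sqrt K)$; keeping its suffix from the last vertex born no later than $2 n \sqrt K$ yields a type~1 invasion path $(x, 2 n \sqrt K) \leadsto (y, 2 (n + 1) \sqrt K)$ with $x \in \eta_{2 n \sqrt K}$, and voidness forces $x \notin Kz + B_K$. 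This path either stays in $Kz + B_{4K - M}$ --- an inner path persisting for $2 \sqrt K$ units of time, excluded by the first part of $G (z, n)$ --- or it meets the shell $Kz + (B_{4K} \setminus B_{4K - M})$, hence is transversal and, since it ends in $Kz + B_{2K}$, is excluded by the second part of $G (z, n)$; the only remaining possibility, that the path stays outside $Kz + B_{4K}$, is incompatible with $y \in Kz + B_{2K}$. In every case we reach a contradiction, so $Kz + B_{2K}$, and a fortiori $K (z \pm e_i) + B_K$, is empty of type~1 players at time $2 (n + 1) \sqrt K$.

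Finally I would observe that $G (z, n)$ is measurable with respect to the graphical representation inside the bounded box above: the width-$2K$ buffer $B_{4K} \setminus B_{2K}$ is exactly what prevents a transversal path from crossing it within the short time $2 \sqrt K$ once $K$ is large, by the linear expansion bound used in Lemma~\ref{lem:walks-horizontal}, which is why restricting to $B_{4K}$ is legitimate. Consequently the family $\{G (z, n)\}$ has finite range of dependence --- the value four recorded in the statement --- and $P (G (z, n)) \geq 1 - \ep$, so Theorem~4.3 in \cite{durrett_1995} yields a coupling under which $\mathbb W_n^{1 - \ep} \subset \mathbb X_n$ for all $n$ whenever $\mathbb X_0 = \mathbb W_0^{1 - \ep}$. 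I expect the delicate point to be not any individual estimate, which are already in place, but the simultaneous bookkeeping forced on the definition of $G (z, n)$: the box must be large enough that the good event controls every invasion route into $Kz + B_{2K}$, small enough to stay measurable in a window whose size does not depend on $n$, and constrained enough to keep its probability above $1 - \ep$; the buffer annulus and the time scale $2 \sqrt K$ --- negligible next to $K$ yet long enough for inner paths to die out --- are tuned precisely so that all three requirements hold at once.
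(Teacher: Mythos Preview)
Your proof is correct and follows essentially the same approach as the paper: combine Lemmas~\ref{lem:walks-sparse}--\ref{lem:walks-horizontal} to show that voidness propagates with probability at least $1-\ep$, then invoke Durrett's Theorem~4.3, with the four-dependence coming from the fact that the relevant events live in $B_{4K}$. Your treatment is in fact more explicit than the paper's --- you spell out the good events $G(z,n)$ and the invasion-path trichotomy establishing the inclusion $G(z,n)\cap\{(z,n)\ \hbox{void}\}\subset\{(z\pm e_i,n+1)\ \hbox{void}\}$, whereas the paper simply records the conditional probability bound and refers back to the argument of Lemma~\ref{lem:coex-perco}.
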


\begin{pf}
Combining Lemmas~\ref{lem:walks-sparse},~\ref{lem:walks-vertical}
and~\ref{lem:walks-horizontal}, we obtain
\begin{eqnarray*}&& P (\eta_{2 \sqrt K} \cap B_{2K}
\neq\varnothing| \eta_0 \cap B_K = \varnothing) \\
&&\qquad\leq P (
\Omega_{\mathrm{in}} | \eta_0 \cap B_K = \varnothing)
+ P (\Omega_{\mathrm{tr}} | \eta_0 \cap B_K =
\varnothing)\\
&&\qquad \leq P \bigl(\eta_s \cap B_{4K} \notin
\mathcal S \mbox{ for all } s \in(0, 1) | \eta_0 \cap B_K
= \varnothing\bigr)
\\
&&\qquad\quad{}+ P \bigl(\Omega_{\mathrm{in}} | \eta_s \cap B_{4K}
\in\mathcal S \mbox{ for some } s \in(0, 1)\bigr) + P (\Omega_{\mathrm{tr}} |
\eta_0 \cap B_K = \varnothing )\\
&&\qquad \leq\ep/ 3 + \ep/ 3 +
\ep/ 3 = \ep
\end{eqnarray*}
for all $K$ large and all $a_{11} < - \max(m_1, m_2)$.
The existence of a coupling with oriented site percolation then
follows from the same arguments as in the proof of Lemma~\ref{lem:coex-perco}.
Note that the comparison can only be made with four dependent
percolation because all the events introduced in the proofs of
Lemmas~\ref{lem:walks-sparse}--\ref{lem:walks-horizontal} are
measurable with respect to the graphical representation in $B_{4K}$.
\end{pf}

Repeating the same steps as in the previous two sections, we deduce
from the lemma that strategy~2 survives.
To also prove extinction of strategy~1, we observe that Lemma~\ref
{lem:walks-horizontal} excludes the existence of transversal paths that
ever intersect $B_{2K}$
by time $2 \sqrt K$ with probability close to one.
In particular, including this event in our definition of void sites,
Lemma~\ref{lem:walks-perco} still holds for arbitrarily small $\ep$.
Moreover, with this new definition, we obtain that
\[
\eta_{2n \sqrt K} \cap(Kz + B_K) \neq\varnothing\qquad\mbox {implies }
(w, 0) \to_2 (z, n) \mbox{ for some } w \in\Z^d
\]
in the oriented graph $\mathcal H_2$.
Note that this implication is similar to the one in the proof of
Lemma~\ref{lem:richardson-dry} and can be shown using the same idea.
Almost sure extinction of strategy~1 then follows from the lack of
percolation of the dry sites repeating again the steps in the proof of
Lemma~\ref{lem:richardson-dry}.


\section*{Acknowledgments}
The author would like to thank Rick Durrett for pointing out the
important literature on evolutionary game theory as well as an
anonymous referee whose comments helped improve the clarity of some proofs.


%



\printaddresses

\end{document}